\newtheorem{prop}{Proposition}[section]
\newtheorem{rem}{Remark}[section]
\newtheorem{lema}{Lemma}[section]
\newtheorem{defi}{Definition}[section]
\newtheorem{teo}{Theorem}[section]
\newtheorem{claim}{Claim}
\newtheorem*{claim*}{Claim}
\newtheorem{maintheorem}{Theorem}
\def\eps{\varepsilon}
\def\phi{\varphi}
\def\R{{\mathbb R}}
\def\N{{\mathbb N}}
\def\E{{\mathcal E}}
\def\P{{\mathcal P}}
\def\F{{\mathcal F}}
\def\D{{\mathcal D}}
\def\M{{\mathcal M}}
\def\T{{\mathcal T}}
\def\es{{\emptyset}}
\def\sm{\setminus}
\def\crit{{\mathcal Cr}}
\def\bd{\partial }
\def\le{\leqslant}
\def\ge{\geqslant}
\def\st{such that }
\def\level{\text{lev}}
\def\F{\mathcal{F}}
\def\M{\mathcal{M}}
\def\cyl{{\rm C}}
\title[Natural equilibrium states for multimodal maps]{Natural equilibrium states for multimodal maps}
\date{\today}
\subjclass[2000]{37D35, 37D25, 37E05}
\keywords{Equilibrium states, thermodynamic formalism, interval maps, non-uniform hyperbolicity}
\author{Godofredo Iommi} \address{Facultad de Matem\'aticas,
Pontificia Universidad Cat\'olica de Chile (PUC), Avenida Vicu\~na Mackenna 4860, Santiago, Chile}
\email{giommi@mat.puc.cl}
\urladdr{http://www.mat.puc.cl/\textasciitilde giommi/}
\author{Mike Todd} \address{Departamento de Matem\'atica Pura,
Faculdade de Ci\^encias da Universidade do Porto
Rua do Campo Alegre, 687, 4169-007 Porto, Portugal }
\email{mtodd@fc.up.pt}
\urladdr{http://www.fc.up.pt/pessoas/mtodd/ }
\begin{document}

\begin{abstract}
This paper is devoted to the study of the thermodynamic formalism for a class of real multimodal maps. This class contains, but it is larger than, Collet-Eckmann. For a map in this class, we prove existence and uniqueness of equilibrium states for the geometric potentials $-t \log|Df|$, for the largest possible interval of parameters $t$.
We also study the regularity and convexity properties of the pressure function, completely characterising the first order phase transitions. Results concerning the existence of absolutely continuous invariant measures with respect to the Lebesgue measure are also obtained.
\end{abstract}

\maketitle

\section{Introduction}

The class of dynamical systems whose ergodic theory is best understood is the class of \emph{hyperbolic dynamical systems}, or, more generally, systems where the interesting dynamics behaves in a uniformly hyperbolic way: Axiom A maps. This is due to several reasons, one of them is the fact that these systems often have a compact symbolic model whose dynamics is well known \cite{bo, Ruellebook}.  For real one-dimensional maps, Axiom A maps are defined to be the class of maps where all points are either uniformly expanded or map into an attracting basin. This class is large even within families of maps with critical points such as the quadratic family, in which case it is a dense set, see \cite{Lyu, GraSw}.  Note that these maps do have a compact symbolic model (see
\cite[Chapter 16]{kh}).  In the example of the quadratic family, maps which are not Axiom A are nowhere dense, but nevertheless have positive Lebesgue measure, see \cite{Jak, BenCar}.  Due to the rich dynamics of these systems, the expansion properties of such systems, can be very delicate.

In recent years a great deal of attention has been paid to non-Axiom A systems which are expanding on most of the phase space, but not in all of it.  The simplest example of these type of maps, namely \emph{non-uniformly hyperbolic} dynamical systems,  are interval maps with a parabolic fixed point (e.g. the Manneville-Pomeau map \cite{pm}).  The ergodic theory for these maps is fairly well understood  \cite{pm, PreSl, Sarphase} and qualitatively different from the one observed in the hyperbolic case.

We will study the ergodic theory of class of maps for which the lack of hyperbolicity  can be even stronger: interval maps with critical points. The techniques we develop are different from the ones used to study hyperbolic systems and systems with a parabolic fixed point.

In this paper we will be devoted to study a particular branch of ergodic theory, namely  \emph{thermodynamic formalism}. This is a set of ideas and techniques which derive from statistical mechanics \cite{bo, Kellbook, Ruellebook,Waltbook}. It can be thought of as the study of certain procedures for the choice of invariant measures. Let us stress that the dynamical systems we will consider have many invariant measures, hence the problem is to choose relevant ones. The main object in the theory is the topological pressure:

\begin{defi}
%\textbf{MT:changed - should we say more (below?) about continuity issues?  I want to be able to cope with piecewise continuous things}
Let $f$ be an endomorphism
%\textbf{MT:I'd like to remove continuity, OK? GI: It is Ok, the only problem is that the definition might be empty in the sense that it is possible for $\M_f= \emptyset$}
of a compact metric space and denote by $\M_f$ the set of $f-$invariant probability measures. Let $\phi: I \to [-\infty, \infty]$ be a  \emph{potential}.  Assuming that $\M_f\neq \es$, the \emph{topological pressure} of $\phi$ with respect to $f$ is defined, via the Variational Principle,  by
\begin{equation*}
P_f(\phi)=P(\phi) = \sup \left\{ h(\mu) + \int \phi \ d\mu :  \mu \in \M_f \textrm{ and } - \int \phi \ d\mu < \infty\right\},
\end{equation*}
where $ h(\mu)$ denotes the measure theoretic entropy of $f$ with respect to $\mu$.  We refer to the quantity in the curly brackets as the \emph{free energy of $\mu$} with respect to $(I, f, \phi)$.  Note that this is sometimes thought of as being minus the free energy; see for example \cite{Kellbook} for a discussion of this terminology.
\end{defi}

Note that we do not specify the regularity properties we require on the potential $\phi$.  If it is a continuous function, then the above definition coincides with classical notions of topological pressure (see \cite[Chapter 9]{Waltbook}).  In this paper we will be interested in the geometric potential $x\mapsto -t\log|Df(x)|$ for some parameter $t\in \R$. This function is continuous in the uniformly hyperbolic case, but is not  upper/lower semicontinuous for $t$ positive/negative for the class of dynamical systems that we will consider.

A measure $\mu_{\phi} \in  \M_f$ is called an \emph{equilibrium state}  for $\phi$ if it satisfies:
\[ h(\mu_{\phi}) + \int \phi \ d\mu_{\phi}= P(\phi). \]
In such a way, the topological pressure provides a natural way to pick up measures. Questions about existence, uniqueness and ergodic properties of equilibrium states are at the core of the theory. For instance, if the dynamical system $f$ is transitive, uniformly hyperbolic and the potential $\phi$ is
H\"older continuous then there exist a unique equilibrium state $\mu_{\phi}$ for $\phi$ and it has strong ergodic properties \cite{bo,Ruellebook}. Moreover, the hyperbolicity of the system is reflected on the regularity of the pressure function $t \mapsto P(t \phi)$. Indeed, the function is real analytic. When the system is no longer hyperbolic, as in the case of the Manneville-Pomeau map, then uniqueness of equilibrium states may break down \cite{PreSl}
and the pressure function might exhibit points where it is not analytic, the so called \emph{phase transitions} \cite{Sarphase}.

As mentioned above, we will consider maps for which the lack of hyperbolicity is strong: not only do the maps have critical points, but the orbit of these points can be dense.  We consider a family of real multimodal maps.
To be more precise the class of maps we will consider is defined as follows.
We say that an interval map $f$ is $C^{1+}$ if it is $C^1$ and the derivative $Df$ is $\alpha$-H\"older for some $\alpha>0$.  Let $\mathcal F$ be the collection of $C^{1+}$ multimodal interval maps $f:I \to I$, where $I=[0,1]$,  satisfying:

\newcounter{Lcount}
\begin{list}{\alph{Lcount})}
{\usecounter{Lcount} \itemsep 1.0mm \topsep 0.0mm \leftmargin=7mm}

\item the critical set $\crit = \crit(f)$ consists of finitely many critical points $c$ with critical order $1 < \ell_c < \infty$, i.e., there exists a neighbourhood $U_c$ of $c$ and a diffeomorphism $g_c:U_c \to g_c(U_c)$ with $g_c(c) = 0$
     $f(x) = f(c) + g_c(x)^{\ell_c}$;
\item $f$ has negative Schwarzian derivative, i.e., $1/\sqrt{|Df|}$ is convex; \iffalse\textbf{GI: don't we need class $C^3$ for the Schwarzian derivative to be defined? maybe we can just ask for the convex part to be true. Maybe I already mentioned this.. MT: this convexity is enough for the distortion properties we require for the inducing schemes}

\item the non-wandering set $\Omega$ (the set of points $x\in I$ \st for arbitrarily small neighbourhoods $U$ of $x$ there exists $n(U)\ge 1$ \st $f^n(U)\cap U\neq \es$) consists of a single interval;\fi
\item $f$ is topologically transitive on $I$;
\item $f^n(\crit)  \cap f^m(\crit)=\es$ for $m \neq n$.
\end{list}

Conditions c) and d) are for ease of exposition, but not crucial.  In particular, Condition c) excludes that $f$ has any attracting cycles, or homtervals (a homterval is an interval $U$ such that $U, f(U), f^2(U), \ldots$ are disjoint and the omega limit set is not a periodic orbit).
Condition d) excludes that one critical point is mapped onto another. If that happened, it would be possible to consider these critical points as a `block', but to simplify the exposition, we will not do that here.  Condition d) also excludes that critical points are preperiodic, a case which is easier to handle (for example by combining \cite[Chapter 16]{kh} and \cite{bo}) and does not require the theory we present here, see Section~\ref{sec:preper}.  Together c) and d) exclude the renormalisable case.

\begin{rem}
Usually in ergodic theory for one-dimensional dynamics it is assumed that the map is $C^2$.  A significant reason is that $C^2$ multimodal maps satisfying a) and b) have no homtervals and the non-wandering set $\Omega$ (the set of points $x\in I$ \st for arbitrarily small neighbourhoods $U$ of $x$ there exists $n(U)\ge 1$ \st $f^n(U)\cap U\neq \es$) can be broken down into finitely many elements $\Omega_k$, on each of which $f$ is topologically transitive, see \cite[Section III.4]{MSbook}.  However, for the maps we consider, assumption c) removes the necessity of a $C^2$ assumption.
We note that in the case where there is more than one transitive element in $\Omega$, for example the renormalisable case, the analysis presented in this paper can be applied to any one of the transitive elements consisting of a union of intervals permuted by $f$.

Now let $\Omega_{int}$ denote the union of all elements of $\Omega$ which consist of intervals permuted by $f$.  If, contrary to the assumptions on $\F$ above, $\Omega_{int}$ did not cover $I$ then there would be a (hyperbolic) Cantor set consisting of points which are always outside $\Omega_{int}$.  Dobbs \cite{Dobphase} showed that for renormalisable maps these hyperbolic Cantor sets can give rise to phase transitions in the pressure function not accounted for by the behaviour of critical points themselves.
\label{rmk:wandering set}
\end{rem}

\begin{rem}
%In \cite{BTeqnat, BTeqgen} the authors assumed their maps were $C^2$ for two further reasons.  The first was to
The smoothness of our maps is important for two further reasons: to allow us to bound distortion on iterates, and to guarantee the existence of `local unstable manifolds'.  For the first, the tool we use is the Koebe Lemma, see \cite[Section IV]{MSbook}.  The negative Schwarzian condition we impose still allows us to use this for $C^{1+}$ maps.  For a detailed explanation of this issue see \cite{Cedthesis}.

Given a measure $\mu\in \M_f$, the existence of local unstable manifolds was used in \cite{BrCMP, BTeqgen} to show the existence of some natural `inducing schemes' (see Section~\ref{sec:ind schemes}).  As shown by Ledrappier \cite{Led}, and later generalised by Dobbs \cite{Dobcusp} (see the appendix), we only need a $C^+$ condition on $f$ to guarantee the existence of local unstable manifolds.
\label{rmk:c2 vs c1}
\end{rem}

Note that our class $\mathcal F$ includes transitive Collet-Eckmann maps, that is maps where $|Df^n(f(c))|$ grows exponentially fast.  Therefore the set of quadratic maps in $\mathcal F$ has positive Lebesgue measure in the parameter space of quadratic maps (see \cite{Jak, BenCar}).  In the appendix we show that our theory can be extended to a slightly more general class of maps, similar to the above, but only piecewise continuous.

As mentioned above, we will be particularly interested in the thermodynamic formalism for the geometric potentials $x\mapsto -t\log|Df|$.  The study of these potentials has various motivations, for example the relevant equilibrium states and the pressure function are related to the Lyapunov spectrum, see for example \cite{T}.
Moreover, important geometric features are captured by this potential. Indeed, in several settings, the equilibrium states for this family are associated to conformal measures on the interval. This allows the study of the fractal geometry of dynamically relevant subsets of the space. Moreover, by \cite{Led} any equilibrium state $\mu$ for the potential $x\mapsto -\log|Df|$ is an absolutely continuous invariant probability measure (acip) provided $\lambda(\mu)>0$.

For $\mu\in \M_f$, we define the \emph{Lyapunov exponent of $\mu$} as
\[\lambda(\mu):= \int \log |Df| \ d\mu. \]
We let
$$\lambda_M:=\sup\{\lambda(\mu):\mu\in \M_f\}, \ \lambda_m:=\inf\{\lambda(\mu):\mu\in \M_f\}.$$

\begin{rem}
Our assumptions on $f\in \F$, particularly non-flatness of critical points and a lack of attracting periodic cycles, means that by \cite{Prz}, $\lambda_m \ge 0$.
\label{rmk:prz}
\end{rem}

We let
$$p(t):=P(-t\log|Df|)$$
and define
\begin{equation} \label{eq:t plus minus}
t^-:=\inf\{t:p(t)>-\lambda_M t\} \text{ and }
t^+:=\sup\{t:p(t)>-\lambda_m t\}.
\end{equation}

Note that if $t^-\in \R$ (resp $t^+\in \R$) then $p$ is linear for all $t\le t^-$ (resp $t\ge t^+$).
We will later prove that for maps in $\F$, $t^-=-\infty$.
%\textbf{MT: new line}  By Remark~\ref{rmk:prz}, $t^+\ge 1$.
We prove in Proposition~\ref{prop:regular} that $t^+>0$.
In some cases $t^+=\infty$.  As we will show later, for non-Collet Eckmann maps with quadratic critical point, $\lambda_m=0$ and $t^+=1$.  \cite{MakSm} suggests that there should also be Collet-Eckmann maps with $t^+\in (1,\infty)$.  %\textbf{GI: new line}
In Proposition \ref{prop:pos press} we prove that under certain assumptions $t^+ \ge 1$: we expect that to be true for any map $f \in \F$.
%\textbf{MT: should have a look at Makarov and Smirnov.  Did Juan get anywhere with an example of $t^+<\infty$? GI: he believes its possible but does not have an example.}

The following is our main theorem.

\begin{maintheorem} \label{thm:eq_exist_unique}
For $f\in \F$ and $t \in (-\infty , t^+)$ there exists a unique equilibrium measure $\mu_t$ for the potential $-t \log |Df|$. Moreover, the measure $\mu_t$ has positive entropy.
\end{maintheorem}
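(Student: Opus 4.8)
The plan is to convert the statement into a question about the thermodynamic formalism of a countable Markov system obtained by inducing, solve it there using the theory of countable Markov shifts, and then transfer existence and uniqueness back to $(I,f)$ via Abramov's formula and a lifting argument. Throughout write $\phi_t:=-t\log|Df|$. First I would use the inducing schemes attached to maps in $\F$ constructed in \cite{BrCMP, BTeqgen}: a (possibly family of) map $F=f^\tau\colon X\to X$, where $X\subset I$ is an interval, $\tau\colon X\to\N$ is the inducing time, $\{\tau=n\}$ is an at most countable union of subintervals, each branch of $F$ maps its domain homeomorphically onto $X$, and, crucially, $F$ is uniformly expanding with uniformly bounded distortion. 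Bounded distortion comes from the Koebe Lemma, available here because of the negative Schwarzian condition in the $C^{1+}$ setting (Remark~\ref{rmk:c2 vs c1}). Such an $(X,F)$ is (conjugate to) a topologically mixing countable Markov shift; lifting the potential gives the induced potential $\Phi_t:=-t\log|DF|=\sum_{j=0}^{\tau-1}\phi_t\circ f^j$, and the distortion estimates show $\Phi_t$ is locally H\"older on the symbolic model, so Sarig's thermodynamic formalism for countable Markov shifts applies \cite{Sarphase}.

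For existence, one looks for the measure maximising the induced free energy of the normalised potential $\Phi_t-p(t)\tau$. The first step is to show that the Gurevich pressure satisfies $P_F(\Phi_t-p(t)\tau)=0$: the inequality $\le 0$ follows from the variational principle on $I$ together with Abramov's and Kac's formulae (using that $p(t)$ bounds all free energies on $I$), and the reverse by approximating from within the inducing scheme. By Sarig's Ruelle--Perron--Frobenius theorem there is then a unique equilibrium state $\mu_{\Phi_t}$ for $\Phi_t-p(t)\tau$, provided the normalised potential is positive recurrent and, most importantly, $\int\tau\,d\mu_{\Phi_t}<\infty$. This last point is exactly where $t<t^+$ enters: for $t\in[0,t^+)$ the strict inequality $p(t)>-\lambda_m t$, valid on the whole interval $[0,t^+)$ since $t\mapsto p(t)+\lambda_m t$ is convex, nonnegative and vanishes on $[t^+,\infty)$, yields a tail estimate of the form $\sum_n n\big(\sup_{\{\tau=n\}}e^{S_n\phi_t}\big)e^{-np(t)}<\infty$, giving both positive recurrence and integrability of $\tau$; for $t\le 0$ one instead invokes $t^-=-\infty$ (proved later for maps in $\F$) together with $p(t)>-\lambda_M t$ to obtain the analogous bound. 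Once $\int\tau\,d\mu_{\Phi_t}<\infty$, projecting $\mu_{\Phi_t}$ to $I$ in the standard way and applying Abramov/Kac produces a measure $\mu_t\in\M_f$ with $h(\mu_t)+\int\phi_t\,d\mu_t=p(t)$, i.e.\ an equilibrium state for $\phi_t$ (obtained from whichever scheme carries the maximising induced measure, if a family is needed).

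For uniqueness and positive entropy, let $\nu$ be any equilibrium state for $\phi_t$, so $h(\nu)=p(t)+t\lambda(\nu)$. Since $\lambda_m\le\lambda(\nu)\le\lambda_M$ and $\lambda_m\ge 0$ (Remark~\ref{rmk:prz}), the definitions of $t^\pm$ give, when $t\in[0,t^+)$, $p(t)>-\lambda_m t\ge-\lambda(\nu)t$, and when $t<0$, $p(t)>-\lambda_M t\ge-\lambda(\nu)t$; in either case $h(\nu)>0$. Applied to the $\mu_t$ built above this proves the positive entropy assertion. Moreover, by the Ruelle inequality $\lambda(\nu)\ge h(\nu)>0$, so $\nu$ is, after ergodic decomposition, a hyperbolic measure with a definite positive Lyapunov exponent; the lifting theorem for equilibrium states of \cite{BrCMP, BTeqgen} (in the spirit of Keller and Zweim\"uller) then shows $\nu$ lifts to one of the inducing schemes, where it becomes an equilibrium state for $\Phi_t-p(t)\tau$. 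Since that equilibrium state is unique, $\nu=\mu_t$. Transitivity and non-renormalisability (conditions c) and d)) are used here to ensure no positive-entropy measure can be supported off the domains of the inducing schemes.

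I expect the main obstacle to be the two intertwined technical points above: (i) proving the tail estimate $\int\tau\,d\mu_{\Phi_t}<\infty$ uniformly over the full range $t<t^+$, which forces a careful comparison of the induced Gurevich pressure with $p(t)$ and is precisely what makes the endpoint $t^+$ (and the class $\F$) the natural one; and (ii) verifying that every positive-entropy equilibrium state is compatible with the inducing schemes of \cite{BrCMP, BTeqgen}, which rests on the combinatorics of these maps near their critical orbits and on using hyperbolicity of the measure to control excursions away from the inducing domain. Both rely essentially on $f\in\F$ having no attracting cycles or homtervals and on the distortion control provided by the negative Schwarzian and $C^{1+}$ hypotheses.
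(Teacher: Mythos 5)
Your overall blueprint — induce, show $P^G(\Psi_t)=0$, obtain a Gibbs/RPF state, verify $\int\tau\,d\mu_{\Phi_t}<\infty$, project back via Abramov, then lift an arbitrary equilibrium state to the inducing schemes to force uniqueness — is the same blueprint the paper follows (Sections 4--7). Your positive-entropy argument, from $p(t)>-\lambda_m t$ (resp.\ $>-\lambda_M t$) directly giving $h(\nu)=p(t)+t\lambda(\nu)>0$, is a legitimate and in fact cleaner alternative to the paper's Lemma~\ref{lem:pos ent}, which argues via a supporting line of the pressure. For uniqueness, your appeal to lifting equilibrium states to \emph{every} scheme in a covering family is exactly the content of Theorem~\ref{thm:schemes}(b) and Remark~\ref{rmk:LE schemes}.

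The genuine gap is at the step you yourself flag as the main obstacle: integrability of $\tau$ with respect to the induced Gibbs state. You assert that $p(t)>-\lambda_m t$ ``yields a tail estimate of the form $\sum_n n\big(\sup_{\{\tau=n\}}e^{S_n\phi_t}\big)e^{-np(t)}<\infty$''. As written this is not the right quantity — $\{\tau=n\}$ is a union of branches whose number typically grows exponentially in $n$, and the Gibbs mass of $\{\tau=n\}$ is a \emph{sum} over those branches, not a single supremum — and even with that fixed, no mechanism is given for converting the strict pressure gap $p(t)+\lambda_m t>0$ into such a tail bound. This would essentially require positive discriminant / strong positive recurrence of $\Psi_t$, which is not automatic and is precisely what is hard here. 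The paper takes a different route (its ``most technically complex part'', Proposition~\ref{prop:conv to Gibbs} together with Proposition~\ref{prop:conv to eq}): it first produces, via Lemmas~\ref{lem:pos ent} and \ref{lem:bdd ind int}, a sequence of compatible $f$-invariant measures with free energy $\to p(t)$, entropy bounded below, and a \emph{uniform} bound $\int\tau\,d\mu_{F,n}<\theta$; then proves a quantitative ``approximate Gibbs'' rigidity (measures whose cylinder masses deviate from $m_\Psi$ must have free energy bounded away from $0$), deduces that $\mu_{F,n}$ converges weak$^*$ to the Gibbs state $\mu_\Psi$, and finally gets $\int\tau\,d\mu_\Psi\le\theta k$ by Monotone Convergence. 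So the conclusion you want is correct, but the bridge you propose to it is not built; you would either need to supply the paper's convergence argument or prove strong positive recurrence of $\Psi_t$ by other means.
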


A classical way to show the existence of equilibrium states is to use upper semicontinuity of entropy and the potential $\phi$ (see \cite[Chapter 4]{Kellbook}), and in particular the upper semicontinuity of $\mu\mapsto \int\phi~d\mu$.
However, in our setting even though, as noted in \cite{BrKell}, for $f\in \F$ the entropy map is upper semicontinuous, the existence of equilibrium measures in the above theorem is not guaranteed since the potential  $-t \log |Df|$ is not upper semicontinuous for $t>0$.  So for example, by \cite[Proposition 2.8]{BrKell} for unimodal maps satisfying the Collet-Eckmann condition, $\mu \mapsto -\lambda(\mu)$ is not upper semicontinuous.  Theorem~\ref{thm:eq_exist_unique} generalises \cite{BrKell} which applies to unimodal Collet-Eckmann maps for a small range of $t$ near 1; \cite{PeSe} which applies to a subset of Collet-Eckmann maps, but for all $t$ in a neighbourhood of $[0,1]$; and \cite[Theorem 1]{BTeqnat} which applies to a class of non-Collet Eckmann multimodal maps with $t$ in a left-sided neighbourhood of 1.

In order to prove Theorem~\ref{thm:eq_exist_unique} we use the theory of inducing schemes developed in \cite{BrCMP, BTeqgen, BTeqnat, T}.  Let us note that the thermodynamic formalism is understood for certain complex rational maps.  For example, Przytycki and Rivera-Letelier \cite{PrzRL} proved that if $f: \overline{\mathbb{C}} \to \overline{\mathbb{C}}$ is a  rational map of degree at least two, is expanding away from the critical points and has `arbitrarily small nice couples' then the pressure function $p$ is real analytic in a certain interval.  These conditions are met for a wide class of rational maps including topological Collet-Eckmann rational maps, any at most finitely renormalisable polynomial with no indifferent periodic orbits, as well as every real quadratic polynomial.
%We believe that our techniques can also be applied in the complex setting. \textbf{MT:sketch or delete? GI: In between the two options i rather delete, it doesn't look very good when people are claiming that they can do everything}

Related to the above are the regularity properties of the pressure function.

\begin{defi} \label{def:kink}
Let $\phi :[0,1] \to \R$ be a potential. The pressure function has a \emph{first order phase transition} at $t_0 \in \R$ if $p$ is not differentiable at $t=t_0$.  %More informally, we say $\phi$ has a \emph{kink} at $t_0$.
\end{defi}
The pressure function is continuous and convex (see \cite[Theorem 9.7]{Waltbook}), which implies that the left and right derivatives $D^-p(t)$ and $D^+p(t)$ at each $t$ exist.  Moreover, the pressure, when finite, can have at most a countable number of points $t_i$ where it is not differentiable (i.e, $Dp^-(t_i)\neq D^+p(t_i)$), hence of first order phase transitions.  The regularity of the pressure is related to several dynamical properties of the system. For example, it has deep connections to large deviations \cite{el} and to different modes of recurrence \cite{Sarphase, Sarcrit}. In Section~\ref{sec:smooth and convex} we prove that the pressure  function  restricted to the interval $(-\infty, t^+)$ not only does not have first order phase transitions, but it is  $C^1$.

\begin{maintheorem}
For $f\in \F$, the pressure function $p$ is  $C^1$, strictly convex and strictly decreasing in $t \in (-\infty , t^+)$.  %\textbf{GI: rewrote the statement} %Moreover, it is strictly convex in \textbf{MT: now able to remove the old $t^*$ here} $(-\infty, t^+)$.
% where $t^*:=\inf\{t:Dp(t) = -\lambda_m\}>0$.
\label{thm:smooth}
\end{maintheorem}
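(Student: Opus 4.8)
The plan is to combine three ingredients: the general convexity of the pressure function, the existence, uniqueness and positive entropy of the equilibrium states $\mu_t$ supplied by Theorem~\ref{thm:eq_exist_unique}, and the theory of inducing schemes from \cite{BrCMP,BTeqgen,BTeqnat,T}. I would start with the soft facts. By \cite[Theorem 9.7]{Waltbook} the function $p$ is finite, convex and continuous on $(-\infty,t^+)$, so the one-sided derivatives $D^{\pm}p(t)$ exist. Testing the variational principle against $\mu_t$ at a nearby parameter $s$ gives $p(s)\ge h(\mu_t)-s\lambda(\mu_t)=p(t)-(s-t)\lambda(\mu_t)$, so $s\mapsto p(s)+(s-t)\lambda(\mu_t)$ is convex with a minimum at $s=t$; hence $D^{-}p(t)\le -\lambda(\mu_t)\le D^{+}p(t)$. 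Since $h(\mu_t)>0$ and, by Remark~\ref{rmk:prz}, every measure has $\lambda\ge 0$, Ruelle's inequality forces $\lambda(\mu_t)\ge h(\mu_t)>0$. Feeding $\mu_{t_2}$ into the variational principle at $t_1<t_2<t^+$ gives $p(t_1)\ge p(t_2)+(t_2-t_1)\lambda(\mu_{t_2})>p(t_2)$, so $p$ is already seen to be \emph{strictly decreasing} on $(-\infty,t^+)$.

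The regularity is the substantial part, and here I would pass to an inducing scheme. For each fixed $t<t^+$, the analysis behind Theorem~\ref{thm:eq_exist_unique} (see \cite{BTeqgen,T}) provides an inducing scheme $(X,F=f^{\tau},\tau)$, adapted to $t$, with the following properties: the equilibrium state $\mu_t$ lifts to an $F$-invariant equilibrium/Gibbs measure $\bar\mu_t$ with finite expected inducing time, $\int\tau\,d\bar\mu_t<\infty$; and for $s$ in a neighbourhood of $t$ the number $p(s)$ is the unique $u\in\R$ with $\widetilde Q(s,u):=P_F\!\left(-s\log|DF|-u\tau\right)=0$, where $P_F$ is the Gurevich pressure of the countable Markov system $F$ and $\log|DF|=\sum_{j=0}^{\tau-1}\log|Df|\circ f^{j}$ on branches. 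Thus $p$ is, locally, the implicit solution $u=p(s)$ of $\widetilde Q(s,u)=0$.

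On the open set where it is finite, $\widetilde Q$ is real analytic, by the thermodynamic formalism for countable Markov shifts \cite{Sarphase} (in the form used in \cite{BrCMP,T}), and its Hessian is positive definite: $\nabla^2\widetilde Q\succeq 0$ by convexity of pressure, and nondegeneracy holds because $\tau$ (which is unbounded) and $\log|DF|$ (which has unbounded variation because of the critical points) are linearly independent modulo coboundaries and constants. Moreover $\partial_u\widetilde Q(t,p(t))=-\int\tau\,d\bar\mu_t\in(-\infty,0)$. The implicit function theorem then gives that $p$ is $C^1$ (in fact real analytic) near each $t<t^+$, with $p'(t)=-\partial_s\widetilde Q/\partial_u\widetilde Q=-\lambda(\mu_t)<0$, and $p''(t)>0$ since it equals $-(\partial_u\widetilde Q)^{-1}$ times the value of $\nabla^2\widetilde Q$ on the vector $(1,p'(t))$. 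Because the equilibrium state is unique (Theorem~\ref{thm:eq_exist_unique}), the local expressions for $p$ obtained from different adapted inducing schemes agree on overlaps, so $p$ is $C^1$, strictly convex and strictly decreasing on all of $(-\infty,t^+)$. (Strict convexity also admits a more elementary justification: if $p$ were affine on some $[a,b]\subset(-\infty,t^+)$, then $s\mapsto\lambda(\mu_s)$ and $s\mapsto h(\mu_s)$ would be constant on $[a,b]$, which would force the $F$-pressure $r\mapsto P_F\!\left(-a\log|DF|-p(a)\tau+r\,\eta\right)$, with $\eta=-\log|DF|+\lambda(\mu_a)\tau$, to vanish identically for small $r\ge 0$; but this function is strictly convex because $\eta$, having unbounded range, is not cohomologous to a constant — a contradiction.)

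The main obstacle is the middle step: establishing, for \emph{every} $t<t^+$, that $\mu_t$ is compatible with an inducing scheme, that its lift has integrable inducing time, and that $p(t)$ is exactly the zero of the induced pressure. This is where the constraint $t<t^+$ (through the definition of $t^{\pm}$) and the positive-entropy conclusion of Theorem~\ref{thm:eq_exist_unique} are essential, and where one must allow the inducing scheme to depend on $t$ and glue the resulting analytic pieces using uniqueness. Once this compatibility and the bound $\int\tau\,d\bar\mu_t<\infty$ are in hand, the remaining steps — convexity of $p$, the derivative sandwich, strict monotonicity, and the implicit function theorem argument — are routine.
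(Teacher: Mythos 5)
Your soft steps are sound and essentially agree with the paper: convexity and the one-sided derivatives from \cite[Theorem 9.7]{Waltbook}, the sandwich $D^{-}p(t)\le -\lambda(\mu_t)\le D^{+}p(t)$ from testing the variational principle against $\mu_t$, and strict monotonicity. Your route to strict decrease (Ruelle's inequality plus $\lambda_m\ge 0$ gives $\lambda(\mu_t)\ge h(\mu_t)>0$) is a clean alternative to the paper's appeal to \cite{Prz}. Your $C^1$ argument via the implicit function theorem applied to $\widetilde Q(s,u)=P_F(-s\log|DF|-u\tau)$ is a legitimate route, though heavier than the paper's, which instead fixes a \emph{single} inducing scheme compatible with $\mu_t$ for all $t\in(-\infty,t^+)$ (Theorem~\ref{thm:schemes}(b) allows this) and shows directly that $t\mapsto\lambda(\mu_t)$ is continuous; this sidesteps the gluing issue you raise at the end.

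The genuine gap is strict convexity. Your primary argument asserts, without proof, that $\tau$ and $\log|DF|$ are independent modulo coboundaries and constants (that is the whole point, not a citable fact), and your fallback asserts that $\eta=-\log|DF|+\lambda(\mu_a)\tau$ is not cohomologous to a constant ``because it has unbounded range.'' The latter is not a valid criterion: a coboundary $u-u\circ F$ on a countable Markov shift is unbounded whenever $u$ is, and for countable alphabets the transfer function realizing zero asymptotic variance need not be bounded. What actually has to be shown is that the per-return Lyapunov exponent $\log|DF|/\tau$ is not essentially constant across branches (equivalently, across periodic $F$-orbits). The paper supplies exactly this in Lemma~\ref{lem:good schemes}: the inducing scheme is built so that, by transitivity, infinitely many branches $X_{i_k}$ shadow a periodic orbit of $f$ with Lyapunov exponent within $\eps$ of $\lambda_m$, forcing $|X_{i_k}|\ge e^{-(\lambda_m+\eps)\tau_{i_k}}$. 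If $p$ were affine with slope $-\beta$ on an interval, the Gibbs property would give $|X_i|\asymp e^{-\beta\tau_i}$ for every $i$, which is impossible once $\beta>\lambda_m+\eps$; and Lemma~\ref{lem:not affine} guarantees such a $\beta$ exists (so also $t^+>0$). There is also a second case to dispose of — affine slope exactly $-\lambda_m$ on some $[t^*,\infty)$, handled separately in Proposition~\ref{prop:strictconv} using the mirror bound $|X_{i_k}|\le e^{-(\lambda_M-\eps)\tau_{i_k}}$ — which your argument does not address. Without an ingredient playing the role of Lemma~\ref{lem:good schemes}, your strict-convexity step does not go through.
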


%Note that if $t^+\in (0,\infty)$ then $t^*=t^+$.  \textbf{MT:don't see this any more}

First order phase transitions are also related to the existence of absolutely continuous invariant probability measures. If $p(t)=0$ for all $t\ge 1$ and there is an acip, then the pressure function is not differentiable at $t=1$. This occurs for example if $f\in \F$ is unimodal and non-Collet Eckmann, but has an acip (see \cite{NoSa}). %, and there is an acip, which necessarily has positive Lyapunov exponent, the pressure function is not differentiable at $t=1$.
The following proposition gives the converse result.

\begin{prop} Let $f\in \F$ be such that $p(1)=0.$
If the pressure function has a first order phase transition at $t=1$ then the map $f$ has an acip. \label{prop:acip}
\end{prop}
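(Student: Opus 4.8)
The plan is to exploit the hypothesis $p(1)=0$ together with the first order phase transition at $t=1$, and to feed this into the variational structure of the pressure. Since $p$ is convex and decreasing (Theorem~\ref{thm:smooth} away from $t^+$, and convexity everywhere by \cite[Theorem 9.7]{Waltbook}), the existence of a first order phase transition at $t=1$ means $D^-p(1) \neq D^+p(1)$, and by convexity and monotonicity we have $D^-p(1) < D^+p(1) \le 0$. Geometrically, the graph of $p$ has a genuine corner at $t=1$. Because $p(1)=0$ and $p$ is decreasing, for $t<1$ the graph lies strictly above the line through the origin of slope $D^-p(1)$, while for $t>1$ the pressure function coincides with the linear piece $p(t) = D^+p(1)(t-1)$ whenever $t^+ = 1$, or at least $p$ has smaller slope there. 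The key point is that $t^+ = 1$: indeed if $t^+>1$ then by Theorem~\ref{thm:smooth} $p$ would be $C^1$ at $t=1$, contradicting the phase transition. So $t^+=1$, and hence for all $t\ge 1$, $p(t) = -\lambda_m t$; combined with $p(1)=0$ this forces $\lambda_m = 0$, and $p(t)=0$ for all $t \in [1, \infty)$ (using that $p$ is decreasing and bounded below by $-\lambda_m t = 0$... one must check $p(t)\le 0$, which follows since $p(1)=0$, $p$ decreasing).

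Next I would produce, for each $t<1$, an equilibrium state $\mu_t$ using Theorem~\ref{thm:eq_exist_unique} (valid since $t<1 = t^+$), with $h(\mu_t) - t\lambda(\mu_t) = p(t)$ and $h(\mu_t)>0$. The derivative information gives control on $\lambda(\mu_t)$: differentiating the pressure, $D^+p(t) = -\lambda(\mu_t)$ at points of differentiability, and more generally $-\lambda(\mu_t) \in [D^-p(t), D^+p(t)]$, so as $t \uparrow 1$ we have $\limsup_{t\uparrow 1} \lambda(\mu_t) \le -D^-p(1) < \infty$ and $\liminf_{t \uparrow 1}\lambda(\mu_t) \ge -D^+p(1) \ge 0$. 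Since the phase transition means $D^-p(1) < D^+p(1) = -\lambda_m = 0$ (using $t^+=1$, so $D^+p(1)$ is the slope of the linear part $-\lambda_m t = 0$), we get $\lambda(\mu_t) \to 0$ as $t\uparrow 1$ is NOT forced — rather $\lambda(\mu_t)$ stays bounded away from... let me recompute: $-D^+p(1) = \lambda_m = 0$ and $-D^-p(1) > 0$, so $\lambda(\mu_t) \in [0, -D^-p(1)]$ in the limit, and in fact the corner means $h(\mu_t) = p(t) + t\lambda(\mu_t) \to t\cdot(\text{something})$. The honest extraction: take a weak-$*$ accumulation point $\mu_\infty$ of $\{\mu_t\}_{t\uparrow 1}$. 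By upper semicontinuity of entropy (noted in the excerpt, via \cite{BrKell}) and of $\mu \mapsto h(\mu) - \lambda(\mu)$ at $t=1$ — here one needs the standard argument that accumulation points of equilibrium states realize the pressure, exploiting $p(1)=0$ — one shows $h(\mu_\infty) - \lambda(\mu_\infty) \ge 0 = p(1)$, hence $=p(1)$, so $\mu_\infty$ is an equilibrium state for $t=1$. The first order phase transition is what guarantees $\mu_\infty$ does not degenerate to a measure with $\lambda = 0$: if every such accumulation point had zero Lyapunov exponent then $h(\mu_\infty)=0$ too, and one could show $p$ would be differentiable at $1$ with $D^-p(1) = D^+p(1)=0$, a contradiction. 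Thus $\lambda(\mu_\infty)>0$.

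Finally, having an equilibrium state $\mu_\infty$ for the potential $-\log|Df|$ with $h(\mu_\infty) - \lambda(\mu_\infty) = p(1) = 0$ and $\lambda(\mu_\infty)>0$, I invoke Ledrappier's theorem \cite{Led} (quoted in the excerpt): any equilibrium state $\mu$ for $x \mapsto -\log|Df|$ with $\lambda(\mu)>0$ is an absolutely continuous invariant probability measure. Hence $f$ has an acip, as claimed.

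The main obstacle I expect is the second paragraph's step: showing that the first order phase transition at $t=1$ genuinely forces the accumulation measure $\mu_\infty$ to have $\lambda(\mu_\infty)>0$ (equivalently, that the equilibrium states $\mu_t$ for $t<1$ do not have Lyapunov exponents tending to $0$), since a priori the jump in the derivative, $D^+p(1) - D^-p(1)>0$, controls the jump in $-\lambda$, and one must rule out that $\lambda(\mu_t) \to 0$. The resolution should be: $D^-p(1) = \lim_{t\uparrow 1} D^-p(t) = \lim_{t\uparrow 1}(-\lambda(\mu_t))$ along a suitable sequence (convexity gives monotonicity of one-sided derivatives), so $\lambda(\mu_t) \to -D^-p(1) > 0 = -D^+p(1) = \lambda_m$, giving a uniform lower bound $\lambda(\mu_\infty) \ge -D^-p(1) > 0$. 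Verifying the interchange of limits and the semicontinuity needed to land $\mu_\infty$ as a bona fide equilibrium state (rather than merely a measure with large free energy) is the technical heart; the entropy upper semicontinuity from \cite{BrKell} plus the fact that $\lambda$ is lower semicontinuous on $\M_f$ (so $-\lambda$ is upper semicontinuous, which is exactly what fails for $t>0$ in general but here we are pushing up to $t=1$ from below where the relevant semicontinuity of $h - \lambda$ can still be salvaged at the accumulation point using $p(1)=0$) should suffice.
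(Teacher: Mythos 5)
Your opening and closing steps are correct and match the paper's route: a first order phase transition at $t=1$ together with $p(1)=0$ does force $t^+=1$ and $\lambda_m=0$, and once one has an equilibrium state for $-\log|Df|$ with positive Lyapunov exponent, Ledrappier's theorem yields the acip. The paper deduces the proposition from Proposition~\ref{prop:acip kink}, whose substance is Lemma~\ref{lem:meas at kink}, and the skeleton of that argument is the one you describe.

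The gap is in the middle step, and it is exactly the one you flag as ``the technical heart'' but then try to close with semicontinuity. This cannot work. First, a sign error: $\log|Df|$ is bounded above and tends to $-\infty$ at critical points, so it is upper semicontinuous as a function on $I$, and therefore $\mu\mapsto\lambda(\mu)$ is \emph{upper} semicontinuous on $\M_f$, not lower semicontinuous as you assert. Upper semicontinuity of $h$ and of $\lambda$ gives, for an accumulation point $\mu_\infty$ of $(\mu_t)_{t\uparrow 1}$, only the one-sided bounds $h(\mu_\infty)\ge\limsup_{t\uparrow 1}h(\mu_t)=-D^-p(1)$ and $\lambda(\mu_\infty)\ge\limsup_{t\uparrow 1}\lambda(\mu_t)=-D^-p(1)$. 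Nothing prevents $\lambda(\mu_\infty)$ from jumping strictly above $-D^-p(1)$ while $h(\mu_\infty)$ does not increase in step, in which case $h(\mu_\infty)-\lambda(\mu_\infty)<0=p(1)$ and $\mu_\infty$ fails to be an equilibrium state. This is precisely the failure the paper points out after Theorem~\ref{thm:eq_exist_unique}: $\mu\mapsto-\lambda(\mu)$ is not upper semicontinuous, which is the whole reason that existence of equilibrium states for $-t\log|Df|$ with $t>0$ is nontrivial. A semicontinuity argument alone cannot produce the equilibrium state at $t=1$.

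The paper's resolution (Lemma~\ref{lem:meas at kink}) does not pass to a weak-$*$ limit directly. It uses convexity to show that the equilibrium states $\mu_{\psi_t}$ for $t$ slightly below $t^+=1$ have entropy bounded below by $-t^+\bigl(D^-p(t^+)+\lambda_m\bigr)>0$, and then re-runs the argument of Proposition~\ref{prop:conv to eq}: lift these measures to a fixed inducing scheme, show the lifted measures converge to the Gibbs measure of the induced potential (via the quantitative Proposition~\ref{prop:conv to Gibbs}), prove the Gibbs measure has finite integral of the inducing time, and project back to obtain a genuine equilibrium state at $t=1$ with positive entropy. It is this inducing-scheme/Gibbs machinery that replaces the semicontinuity control you are hoping for; any proof avoiding it would need some other mechanism to bound $\lambda(\mu_\infty)$ from above.
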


We summarise some of the other results we present here for the potential $x\mapsto -t\log|Df(x)|$ in the simpler case of unimodal maps with quadratic critical point in the following proposition.

\begin{prop}
If $f\in \F$ is unimodal, non-Collet Eckmann and $\ell_c=2$ then $p$ is $C^1$, strictly convex and decreasing  throughout $(-\infty, 1)$ and $p(t)=0$ for all $t\ge 1$.  Moreover,
\begin{list}{$\bullet$}{\itemsep 0.2mm \topsep 0.2mm \itemindent -0mm \leftmargin=5mm}
\item[(a)]  if $f$ has no acip then $p$ is $C^1$ throughout $\R$;
\item[(b)]  if $f$ has an acip then $p$ has a first order phase transition at $t=1$.
%\item[(c)] if $f$ is Collet-Eckmann and there is a measure $\mu\in \M$ such that $\lambda(\mu)=\lambda_m$ then $p$ is $C^1$ is differentiable in $\R$.  \textbf{MT:not sure about including this}
\end{list}
\label{prop:collected results}
\end{prop}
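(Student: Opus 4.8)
The plan is to assemble this from the main theorems and the other stated results, specialised to the unimodal quadratic non-Collet-Eckmann case. First, for such a map $f$, Remark~\ref{rmk:prz} gives $\lambda_m\ge 0$; the claimed fact (stated in the introduction, to be proved later) that $\lambda_m=0$ and $t^+=1$ for non-Collet-Eckmann maps with quadratic critical point pins down the relevant interval of parameters. With $t^+=1$ in hand, Theorem~\ref{thm:smooth} immediately yields that $p$ is $C^1$, strictly convex and strictly decreasing on $(-\infty,1)$. For $t\ge 1$, the definition of $t^+$ in \eqref{eq:t plus minus} together with $\lambda_m=0$ forces $p(t)\le -\lambda_m t=0$ for $t\ge t^+=1$; since the constant function is always a candidate and $p(t)\ge p(\infty)\ge 0$ here — more precisely, since $h(\mu)\ge 0$ and one can take measures with $\lambda(\mu)$ arbitrarily close to $0$, the variational principle gives $p(t)\ge 0$ for all $t$ — we conclude $p(t)=0$ for all $t\ge 1$. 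This establishes the first sentence of the proposition.

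For part (b): if $f$ has an acip, then that acip is an equilibrium state for $-\log|Df|$ (by Ledrappier's result quoted in the introduction, since its Lyapunov exponent is positive by Przytycki), so $p(1)=h(\mu_{ac})-\lambda(\mu_{ac})$. An acip for a non-Collet-Eckmann quadratic map still has positive entropy and finite positive Lyapunov exponent, and in fact the Rokhlin formula gives $h(\mu_{ac})=\lambda(\mu_{ac})$, hence $p(1)=0$, consistent with the above. Then $D^-p(1)=-\lambda(\mu_{ac})<0$ by strict monotonicity of $p$ on $(-\infty,1)$ (the left derivative at $1$ equals $-\lambda(\mu_t)$ as $t\uparrow 1$, which stays bounded away from $0$), whereas $D^+p(1)=0$ because $p\equiv 0$ on $[1,\infty)$. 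Since $D^-p(1)\ne D^+p(1)$, the pressure has a first order phase transition at $t=1$ by Definition~\ref{def:kink}. This is exactly the content of the converse direction discussed before Proposition~\ref{prop:acip}, run in the forward direction.

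For part (a): suppose $f$ has no acip. On $(-\infty,1)$ we already have $C^1$ from Theorem~\ref{thm:smooth}, and on $(1,\infty)$ the function $p\equiv 0$ is trivially $C^1$, so the only issue is differentiability at $t=1$, i.e.\ whether $D^-p(1)=D^+p(1)=0$. We have $D^+p(1)=0$ as before. For the left derivative: $D^-p(1)=-\lim_{t\uparrow 1}\lambda(\mu_t)$ where $\mu_t$ is the equilibrium state supplied by Theorem~\ref{thm:eq_exist_unique}. The key point is that the absence of an acip must force $\lambda(\mu_t)\to 0$ as $t\uparrow 1$; for otherwise a weak-$*$ limit of the $\mu_t$ would be an invariant measure with $h(\mu)=\lambda(\mu)>0$ and $h(\mu)-\lambda(\mu)=p(1)=0$, i.e.\ an equilibrium state for $-\log|Df|$ with positive Lyapunov exponent, which by Ledrappier would be an acip — a contradiction. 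Hence $D^-p(1)=0=D^+p(1)$ and $p$ is differentiable, thus $C^1$, throughout $\R$.

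The main obstacle is the analytic input underlying the introductory claims ``$\lambda_m=0$ and $t^+=1$'' for non-Collet-Eckmann quadratic maps, and the semicontinuity/accumulation argument that controls $\lambda(\mu_t)$ as $t\uparrow 1$: one must rule out that the equilibrium states $\mu_t$ accumulate on a measure of zero Lyapunov exponent while $f$ still has an acip, and conversely. This relies on the structure of the pressure function near $t=1$ (that $p(t)>0$ for $t<1$ by the definition of $t^+$, while $p(1)=0$), on upper semicontinuity of entropy from \cite{BrKell}, and on the dichotomy for equilibrium states at $t=1$ coming from Ledrappier's theorem. Everything else is bookkeeping with convexity, monotonicity and the variational principle.
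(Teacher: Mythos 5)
Your overall plan — pin down $t^+=1$, apply Theorem~\ref{thm:smooth} on $(-\infty,1)$, note $p\equiv 0$ on $[1,\infty)$, and then argue about the left derivative at $t=1$ via the acip — is the same route the paper takes (the paper routes parts (a) and (b) through Proposition~\ref{prop:acip kink}, whose proof contains exactly the analysis you sketch). However, there are two places where your argument has gaps.

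First, the derivation of $t^+=1$ and $p(t)=0$ for $t\ge 1$ cannot rest on the introductory remark that ``$\lambda_m=0$ and $t^+=1$ for non-Collet-Eckmann quadratic maps''; that remark is itself a consequence, not an input. The paper derives $t^+=1$ by combining Proposition~\ref{prop:pos press} (which gives $p(t)>0$ for all $t<1$, valid here because $\ell_c=2$ rules out a wild attractor by Remark~\ref{rmk:wild}) with the Nowicki--Sands result \cite{NoSa} giving $p(t)=0$ for $t\ge 1$ in the unimodal non-CE case. Your use of the definition of $t^+$ together with the asserted $\lambda_m=0$ is circular without these ingredients.

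Second, and more seriously, your argument for part (a) asserts that if $\lambda(\mu_t)\not\to 0$ as $t\uparrow 1$, then a weak-$*$ limit $\mu$ of the $\mu_t$ satisfies $h(\mu)=\lambda(\mu)>0$ and $h(\mu)-\lambda(\mu)=p(1)=0$, i.e.\ that the limit is an equilibrium state at $t=1$. This does not follow from upper semicontinuity of $\mu\mapsto h(\mu)$ and $\mu\mapsto\lambda(\mu)$. Both are only u.s.c., so passing to a weak-$*$ limit one can only conclude $h(\mu)\ge\limsup h(\mu_t)$ and $\lambda(\mu)\ge\limsup\lambda(\mu_t)$; together with the variational bound $h(\mu)-\lambda(\mu)\le p(1)=0$ this is compatible with $h(\mu)<\lambda(\mu)$, in which case $\mu$ is not an equilibrium state and no contradiction arises. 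This is precisely the technical obstruction the paper's Section~\ref{sec:Gibbs integ} is built to overcome: the correct extraction of an equilibrium state at $t^+$ from approximating measures is Lemma~\ref{lem:meas at kink}, which invokes Proposition~\ref{prop:conv to eq} and hence the Gibbs/tightness machinery for the induced system (Proposition~\ref{prop:conv to Gibbs}), not a bare weak-$*$ limit. You flag this as ``the main obstacle'' at the end of your write-up, but the proof as written leaves it unfilled. Part (b) is fine and matches the argument in Proposition~\ref{prop:acip kink}.
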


The paper is organised as follows.  In Section~\ref{sec:CMS} we give an introduction to the theory of thermodynamic formalism for countable Markov shifts, which is primarily due to Sarig. In Section~\ref{sec:ind schemes} we give some preliminary results on inducing schemes, which will allow us to code any of our systems by a countable Markov shift.  In Section~\ref{sec:zero press} we show that the inducing schemes in Section~\ref{sec:ind schemes} have some of the properties which will allow us to produce equilibrium states for our systems.  In Section~\ref{sec:Gibbs integ} we prove the most technically complex part of our paper which gives us the existence of equilibrium states for our systems.  Section~\ref{sec:unique} gives details of the uniqueness of these equilibrium states which then allows us to prove Theorem~\ref{thm:eq_exist_unique} in Section~\ref{sec:main thm}.  In Section~\ref{sec:smooth and convex} we prove Theorem~\ref{thm:smooth} and in Section~\ref{sec:kinks acips} we prove Propositions~\ref{prop:acip} and \ref{prop:collected results}. In Section~\ref{sec:remarks} we discuss statistical properties of the measures constructed, the ergodic optimisation problem  and the case in which the critical points are preperiodic. Finally in the appendix we show how the results of this paper extend to a class of Lorenz-like maps, of the kind studied by Rovella \cite{Rovella} and Keller and St Pierre \cite{KellStP}.

Note that many of the results we quote in this paper are proved using the theory of Markov extensions introduced by Hofbauer.  To prove our main theorems it is not necessary to explain this theory in any detail since it is sufficient to quote results from elsewhere.  However, for a short description of this construction, see the appendix.

\emph{Acknowledgements:}
We would like to thank N.\ Dobbs for his comments on earlier versions of this paper which improved both the results and the exposition.  We would also like to thank H.\ Bruin and J.\ Rivera-Letelier for their useful remarks.  MT would also like to thank the mathematics department at PUC, where some of this work was done, for their hospitality.

%\textbf{MT:should check to see if any citations can be made more specific}

\section{Preliminaries: countable Markov shifts}
\label{sec:CMS}

In this section we present the theory of countable Markov shifts: an extension of the finite case, and the relevant model for many non-uniformly hyperbolic systems, including maps in $\F$.

Let $\sigma \colon \Sigma \to \Sigma$ be a one-sided Markov shift
with a countable alphabet $S$. That is, there exists a matrix
$(t_{ij})_{S \times S}$ of zeros and ones (with no row and no column
made entirely of zeros) such that
\[
\Sigma=\{ x\in S^{\N_0} : t_{x_{i} x_{i+1}}=1 \ \text{for every $i
\in \N_0$}\},
\]
and the shift map is defined by $\sigma(x_0x_1 \cdots)=(x_1 x_2
\cdots)$. We say that $(\Sigma,\sigma)$ is a \emph{countable
Markov shift}.
We equip $\Sigma$ with the topology generated by the cylinder sets
\[ C_{i_0 \cdots i_n}= \{x \in
\Sigma : x_j=i_j \text{ for } 0 \le j \le n \}.\] Given a function
$\phi\colon \Sigma \to\R$, for each $n \ge 1$ we set
\[
V_{n}(\phi) = \sup \left\{|\phi(x)-\phi(y)| : x,y \in \Sigma,\
x_{i}=y_{i} \text{ for } 0 \le i \le n-1 \right\}.
\]
We say that $\phi$ has \emph{summable variations} if
$\sum_{n=2}^{\infty} V_n(\phi)<\infty$.  We will sometimes refer to $\sum_{n=2}^{\infty} V_n(\phi)$ as the \emph{distortion bound} for $\phi$.  Clearly, if $\phi$ has
summable variations then it is continuous.   We say that $\phi$ is \emph{weakly H\"older continuous} if $V_n(\phi)$ decays exponentially.  If this is the case then it has summable variations. In what follows we assume
$(\Sigma, \sigma)$ to be topologically mixing (see \cite[Section 2]{Sartherm} for a precise definition).

It is a subtle matter to define a notion of topological pressure for countable Markov shifts. Indeed, the classical definition  for continuous maps on compact metric spaces is based on the notion of
$(n,\eps)$-separated sets (see \cite[Chapter 9]{Waltbook}). This notion depends upon the metric of the space. In the compact setting, since all metrics generating the same topology are equivalent, the value of the pressure does not depend upon the metric. However, in non-compact settings this is no longer the case. Based on work of Gurevich \cite{Gutopent, Gushiftent}, Sarig \cite{Sartherm} introduced a notion of pressure for countable Markov shifts which does not depend upon the metric of the space and which satisfies a Variational Principle.
Let $(\Sigma, \sigma)$ be a topologically mixing countable Markov shift, fix a symbol $i_0$ in the alphabet $S$ and
let $\phi \colon \Sigma \to \R$ be a potential of summable variations.  We let
\begin{equation}
Z_n(\phi, C_{i_0}):=\sum_{x:\sigma^{n}x=x} \exp \left(S_n\phi(x)\right) \chi_{C_{i_{0}}}(x)
\label{eq:Zn}
\end{equation}
where $\chi_{C_{i_{0}}}$ is the characteristic function of the
cylinder $C_{i_{0}} \subset \Sigma$, and
$$S_n\phi(x):=\phi(x)+\cdots +\phi\circ\sigma^{n-1}(x).$$
Moreover, the so-called \emph{Gurevich pressure} of $\phi$ is defined by
\[
 P^G(\phi) := \lim_{n \to
\infty} \frac{1}{n} \log Z_n(\phi, C_{i_0}).
\]
Since $\sigma$ is topologically mixing, one can show that $P^G(\phi)$ does not depend on $i_0$.  We define
$$\M_\sigma(\phi):=\left\{\mu\in \M_\sigma:-\int\phi~d\mu<\infty\right\}.$$
If $(\Sigma, \sigma)$ is the full-shift on a countable alphabet then the Gurevich pressure coincides with the notion of pressure introduced by Mauldin and Urba\'nski \cite{MUifs}.
Furthermore, the following property holds (see \cite[Theorem 3]{Sartherm}):
%\textbf{GI: I changed the assumption on the potential.  MT: it's not totally clear to me that in the case that where BIP fails $P^G(\phi)<\infty$ is enough here.  I can believe it though.}
\begin{prop}[Variational Principle] \label{prop:VarPri}
If $\phi: \Sigma \to \mathbb{R}$ has summable variations and $P^G(\phi)<\infty$ then
\begin{equation*}
P^G(\phi)= \sup \left\{ h_{\mu}(\sigma) +\int_\Sigma \phi \, d
\mu : \mu\in \M_\sigma (\phi)\right\}.
\end{equation*}
\end{prop}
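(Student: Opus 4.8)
The plan is to prove the two inequalities separately, the inequality $P^G(\phi)\le\sup\{\cdots\}$ being the easier one. First I would show that $P^G(\phi)=\sup_{\Sigma'}P^G(\phi|_{\Sigma'})$, the supremum being taken over all \emph{finite} topologically mixing subshifts $\Sigma'\subset\Sigma$ containing the base symbol $i_0$. This is immediate from the periodic-orbit definition \eqref{eq:Zn}: every $\sigma$-periodic point meeting $C_{i_0}$ uses only finitely many symbols, so it lies in some such $\Sigma'$ (and topological mixing of $\Sigma$ lets one enlarge any finite symbol set into a mixing subshift, so $\tfrac1n\log Z_n(\phi|_{\Sigma'},C_{i_0})$ increases to $\tfrac1n\log Z_n(\phi,C_{i_0})$ up to negligible error). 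On a finite subshift the Gurevich pressure is the classical topological pressure, and the classical variational principle (see \cite[Chapter 9]{Waltbook}) produces an invariant measure $\mu'$, supported on $\Sigma'\subset\Sigma$, with $h_{\mu'}(\sigma)+\int\phi\,d\mu'=P^G(\phi|_{\Sigma'})$ and $-\int\phi\,d\mu'<\infty$. Taking the supremum over $\Sigma'$ gives $\sup\{h_\mu(\sigma)+\int\phi\,d\mu:\mu\in\M_\sigma(\phi)\}\ge P^G(\phi)$.

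For the reverse inequality fix $\mu\in\M_\sigma(\phi)$; by ergodic decomposition, affinity of entropy and linearity of $\int\phi\,d\mu$, I may assume $\mu$ ergodic, and since $\Sigma=\bigcup_i C_i$ is a countable union some symbol $i_0$ has $\mu(C_{i_0})>0$. The cleanest route is to pass to the first-return map $\bar\sigma=\sigma^R$ on $C_{i_0}$, which is a full shift on the countable alphabet of first-return loops, with induced potential $\bar\phi=S_R\phi$ and induced measure $\bar\mu$; Abramov's and Kac's formulae give $h_\mu(\sigma)=\mu(C_{i_0})h_{\bar\mu}(\bar\sigma)$, $\int\phi\,d\mu=\mu(C_{i_0})\int\bar\phi\,d\bar\mu$ and $\int R\,d\bar\mu=1/\mu(C_{i_0})$. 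The unique decomposition of a periodic orbit through $C_{i_0}$ into first-return loops gives a generating-function identity from which $p:=P^G(\phi)$ is characterised by an estimate of the form $\sum_{\ell}\exp\bigl(\sup_{[\ell]}(\bar\phi-p|\ell|)\bigr)\le\exp\bigl(\sum_{k\ge2}V_k(\phi)\bigr)$. A standard Gibbs/Jensen estimate on the full shift (partition into $1$-cylinders, $H_{\bar\mu}(\bigvee_0^{n-1}\bar\sigma^{-i}\alpha)+\int S_n\psi\,d\bar\mu\le\log\sum\sup e^{S_n\psi}$ for $\psi:=\bar\phi-pR$, then divide by $n$) then yields $h_{\bar\mu}(\bar\sigma)+\int(\bar\phi-pR)\,d\bar\mu\le0$; multiplying by $\mu(C_{i_0})$ and applying Abramov returns $h_\mu(\sigma)+\int\phi\,d\mu\le p$, and we take the supremum over $\mu$. (An alternative, more self-contained route avoids the induced system and instead combines Shannon--McMillan--Breiman, $-\tfrac1n\log\mu(C_{x_0\cdots x_{n-1}})\to h_\mu(\sigma)$, with Birkhoff's theorem for $\phi$ and an Egorov argument, counting the $\approx e^{nh_\mu(\sigma)}$ cylinders of $\mu$-measure $\approx e^{-nh_\mu(\sigma)}$ meeting $C_{i_0}$ that carry $S_n\phi\approx n\int\phi\,d\mu$, each contributing a periodic point to $Z_n(\phi,C_{i_0})$.)

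The main obstacle in either route is controlling a priori infinities and the uniformity of distortion. One needs $h_\mu(\sigma)<\infty$ for every $\mu\in\M_\sigma(\phi)$ when $P^G(\phi)<\infty$ (otherwise the counting argument and SMB require extra care); the induced-system estimate delivers this as a by-product. One must also check that Abramov's and Kac's formulae apply, i.e.\ that $\bar\phi\in L^1(\bar\mu)$ and $R\in L^1(\bar\mu)$, both consequences of $\phi\in L^1(\mu)$ via Kac. Throughout, the summable-variations hypothesis is precisely what makes the distortion corrections (the $\sum_{k\ge2}V_k(\phi)$ terms) uniform in $n$, hence asymptotically negligible after dividing by $n$. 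I expect the genuinely technical point to be the combinatorial generating-function identity relating $P^G(\phi)$ to the loop partition function — i.e.\ transferring cleanly between the periodic-orbit definition of $P^G$ and the measure-theoretic side; everything else is bookkeeping once that bridge is in place. This is Sarig's theorem \cite[Theorem 3]{Sartherm}, so in the write-up it suffices to cite it, but the above is the argument I would reconstruct.
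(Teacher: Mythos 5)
The paper does not prove this proposition at all: it is a preliminary result quoted verbatim from Sarig \cite[Theorem 3]{Sartherm}, and the only ``proof'' the paper offers is that citation. Your proposal correctly identifies this and also reconstructs the argument, so at the level of what would actually appear in the write-up you and the paper agree.

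Your reconstruction is in the spirit of Sarig's actual argument and the skeleton is right: the inequality $P^G(\phi)\le\sup$ via exhaustion by finite mixing subshifts and the classical variational principle, and the reverse inequality by inducing on a cylinder of positive $\mu$-measure, transferring to a full shift, and using Abramov/Kac together with a Gibbs-type entropy estimate. Two places would need more care in a complete write-up. First, the identity $P^G(\phi)=\sup_{\Sigma'}P^G(\phi|_{\Sigma'})$ over finite mixing subshifts is not an immediate exchange of $n\to\infty$ and $S'\nearrow S$ limits; one needs the almost-superadditivity of $n\mapsto\log Z_n(\phi,C_{i_0})$ (with error controlled by the distortion constant $\sum_{k\ge2}V_k(\phi)$) to fix $n$, then enlarge $S'$, then pass back to the pressure of $\Sigma'$. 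Second, in the generating-function step the inequality you state, $\sum_\ell e^{\sup_{[\ell]}(\bar\phi-p|\ell|)}\le\exp(\sum_k V_k(\phi))$, gives a bound of the form $h_{\bar\mu}+\int(\bar\phi-pR)\,d\bar\mu\le\sum_k V_k(\phi)$ before dividing by $n$, not $\le0$ directly; one must run the estimate on $n$-cylinders and use that the distortion correction stays $O(1)$ so that dividing by $n$ kills it. You flag both of these yourself as the ``technical bridge'' and ``bookkeeping,'' which is accurate. As for dependencies: note that the approximation property (quoted in the paper as Proposition~\ref{prop:approx}) is, in Sarig, a corollary of the variational principle rather than an input to it, so your elementary periodic-orbit argument for the first direction is the right way to avoid circularity.
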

Let us stress that the right hand side of the above inequality only depends on the Borel structure of the space and not on the metric. Therefore, a notion of pressure which is to satisfy the Variational Principle need not depend upon the metric of the space.

The Gurevich pressure also has the property that it can be approximated by its restriction to compact sets. More precisely
\cite[Corollary 1]{Sartherm}:

\begin{prop}[Approximation property] \label{prop:approx}
If $\phi: \Sigma \to \mathbb{R}$ has summable variations  then
\begin{equation*}%\label{eq:Gurevich approx}
P^G( \phi) = \sup \{ P_{\sigma|K}( \phi) : K \subset \Sigma : K \ne \emptyset \text{ compact and } \sigma\text{-invariant}  \},
\end{equation*}
where $P_{\sigma|K}( \phi)$ is the classical topological pressure on
$K$.
\end{prop}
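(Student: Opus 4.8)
The plan is to establish the two inequalities separately, using the elementary observation that a subshift on a finite sub-alphabet of $S$ is a compact, $\sigma$-invariant subset of $\Sigma$, so that such subshifts are admissible sets $K$ in the supremum on the right-hand side.

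For the inequality ``$\leq$'' I would pass to finite alphabets. Enumerate $S=\{1,2,3,\dots\}$ with $i_0=1$, and for $N\geq 1$ let $\Sigma_N\subset\Sigma$ be the subshift on the alphabet $\{1,\dots,N\}$; each $\Sigma_N$ is compact and $\sigma$-invariant, and it is nonempty once $N$ is large because topological mixing of $(\Sigma,\sigma)$ produces loops through the symbol $i_0$. Let $Z_n^{(N)}(\phi,C_{i_0})$ be the sum in \eqref{eq:Zn} restricted to periodic points all of whose coordinates lie in $\{1,\dots,N\}$, and set $P_N:=\lim_n\frac1n\log Z_n^{(N)}(\phi,C_{i_0})$, the Gurevich pressure of $\phi$ on $\Sigma_N$. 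By monotone convergence, $Z_n^{(N)}(\phi,C_{i_0})\nearrow Z_n(\phi,C_{i_0})$ as $N\to\infty$ for each fixed $n$. The crucial step is to show $\lim_{N\to\infty}P_N=P^G(\phi)$. If some $Z_n(\phi,C_{i_0})=\infty$ then $P^G(\phi)=\infty$ and one checks directly that $\sup_N P_N=\infty$; otherwise, given $\eps>0$ choose $n_0$ large with both $\frac1{n_0}\log Z_{n_0}(\phi,C_{i_0})>P^G(\phi)-\eps$ and $\frac1{n_0}\sum_{n\geq 2}V_n(\phi)<\eps$, then choose $N$ with $\frac1{n_0}\log Z_{n_0}^{(N)}(\phi,C_{i_0})>P^G(\phi)-\eps$. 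Concatenating $k$ period-$n_0$ loops through $C_{i_0}$ lying in $\Sigma_N$ yields a period-$kn_0$ loop through $C_{i_0}$ in $\Sigma_N$, and the distortion bound controls $S_{kn_0}\phi$ on the concatenation from below by $k\,S_{n_0}\phi$ minus $k\sum_{n\geq 2}V_n(\phi)$; hence $Z_{kn_0}^{(N)}(\phi,C_{i_0})\geq\bigl(e^{-\sum_{n\geq2}V_n(\phi)}Z_{n_0}^{(N)}(\phi,C_{i_0})\bigr)^k$, and letting $k\to\infty$ gives $P_N\geq\frac1{n_0}\log Z_{n_0}^{(N)}(\phi,C_{i_0})-\eps>P^G(\phi)-2\eps$. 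Since $\Sigma_N$ is a compact subshift, the classical pressure $P_{\sigma|\Sigma_N}(\phi)$ dominates $P_N$ (the period-$n$ points form a finite $(n,\eps_0)$-separated set for a suitable fixed $\eps_0$, and summable variation bounds the difference between $S_n\phi(x)$ and its supremum over the $d_n$-ball around $x$), so $\sup_K P_{\sigma|K}(\phi)\geq\sup_N P_{\sigma|\Sigma_N}(\phi)\geq\sup_N P_N=P^G(\phi)$.

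For the reverse inequality, let $K\subset\Sigma$ be any nonempty compact $\sigma$-invariant set. The potential $\phi$ is continuous, hence bounded on $K$, so the classical variational principle on the compact system $(K,\sigma|_K)$ gives $P_{\sigma|K}(\phi)=\sup\{h_\mu(\sigma)+\int\phi\,d\mu:\mu\in\M_{\sigma|K}\}$, and each such $\mu$, viewed as an element of $\M_\sigma$, satisfies $-\int\phi\,d\mu<\infty$. If $P^G(\phi)=\infty$ there is nothing to prove; otherwise Proposition~\ref{prop:VarPri} applies and yields $h_\mu(\sigma)+\int\phi\,d\mu\leq P^G(\phi)$ for every such $\mu$, so $P_{\sigma|K}(\phi)\leq P^G(\phi)$. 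Taking the supremum over $K$ finishes the proof.

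I expect the main obstacle to be the finite-alphabet approximation $\lim_N P_N=P^G(\phi)$. The naive choice of $N$ making $Z_n^{(N)}(\phi,C_{i_0})$ close to $Z_n(\phi,C_{i_0})$ depends on $n$; the point of the concatenation-with-bounded-distortion argument is precisely to upgrade a good estimate at a single large scale $n_0$ to an honest lower bound for the pressure $P_N$ of one fixed finite subshift. One must also handle with some care the possibility that individual partition functions $Z_n(\phi,C_{i_0})$ are infinite and the fact that $\Sigma_N$ may be empty or dynamically trivial for small $N$; both are controlled by topological mixing of $(\Sigma,\sigma)$ together with the observation that $Z_n(\phi,C_{i_0})=\infty$ forces $P^G(\phi)=\infty$.
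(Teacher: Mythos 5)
The paper does not prove this proposition; it is quoted directly from Sarig \cite[Corollary 1]{Sartherm}, so there is no internal argument to compare against. Your proof is correct. The lower bound ($\ge$) is the routine half: $V_n(\phi)\to0$ gives continuity of $\phi$, hence boundedness on any compact $K$, so every $\mu\in\M_{\sigma|K}$, extended by zero, lies in $\M_\sigma(\phi)$ and is admissible in Proposition~\ref{prop:VarPri}; the classical variational principle on $(K,\sigma|_K)$ then does the rest. The upper bound ($\le$) is the substantive half, and your treatment is sound: truncate to the finite-alphabet subshifts $\Sigma_N$, note $Z_n^{(N)}\nearrow Z_n$ for each fixed $n$, and use concatenation of period-$n_0$ loops through $C_{i_0}$ inside $\Sigma_N$, with the per-block distortion error bounded by $\sum_{m\ge2}V_m(\phi)$, to upgrade a good partition-function estimate at a single scale $n_0$ to an honest lower bound for the pressure of the fixed compact subshift $\Sigma_N$. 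You also correctly handle the caveats: that $\Sigma_N$ must contain a loop through $i_0$ (guaranteed by topological mixing once $N$ is large), and that an infinite $Z_n(\phi,C_{i_0})$ forces $P^G(\phi)=\infty$ via the same concatenation.

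One small point worth spelling out: for small or non-irreducible $\Sigma_N$ the limit defining $P_N$ need not exist a priori. This does no harm, because your concatenation inequality already bounds $\liminf_k\frac1{kn_0}\log Z_{kn_0}^{(N)}$ from below by $P^G(\phi)-2\eps$, while the classical pressure $P_{\sigma|\Sigma_N}(\phi)$ dominates $\limsup_n\frac1n\log Z_n^{(N)}$ (the period-$n$ points in $C_{i_0}\cap\Sigma_N$ form a finite $(n,\eps_0)$-separated set for a fixed $\eps_0$ depending only on the shift metric), and the chain $\limsup_n\ge\limsup_k\ge\liminf_k$ closes the gap.
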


We consider a special class of invariant measures. We say that $\mu\in \M_\sigma$ is a \emph{Gibbs measure} for the
function $\phi \colon \Sigma \to \R$ if for some constants $P$,
$C>0$ and every $n\in \N$ and $x\in C_{i_0 \cdots i_n}$ we have
\begin{equation*} %\label{eq:gibbs}
\frac{1}C \le \frac{\mu(C_{i_0\cdots i_n})}{\exp \left(-nP + S_n\phi(x)\right)} \le C.
\end{equation*}
 We refer to any such $C$ as a \emph{distortion constant} for the Gibbs measure.
It was proved by Mauldin and Urba\'nski \cite{muGIBBS} and by Sarig in \cite{SarBIP} that if $(\Sigma, \sigma)$ is a full-shift and the
function $\phi$ is of summable variations with finite Gurevich pressure $P^G(\phi)$ then it has an invariant Gibbs measure.  Moreover $P=P^G(\phi)$, and if $-\int\phi~d\mu<\infty$ then $\mu$ is an equilibrium state for $\phi$.  Furthermore, this is the unique equilibrium state for $\phi$ by \cite[Theorem 3.5]{muGIBBS} and \cite{BuSar}.

\section{Inducing schemes}
\label{sec:ind schemes}

In order to prove Theorem~\ref{thm:eq_exist_unique} we will use the machinery of inducing schemes.
%We will derive the uniqueness of the equilibrium state for our schemes via the uniqueness of equilibrium states for the full shift on countably many symbols, see \cite{BuSar} (see also \cite{muGIBBS}).
We will use the fact that inducing schemes for the system $(I,f)$ can  be coded by  the full-shift on countably many symbols. %\textbf{GI: slightly changed the above sentence}

Given $f\in \F$, we say that $(X,F,\tau)$ is an \emph{inducing scheme} for $(I,f)$ if

\begin{list}{$\bullet$}{\itemsep 0.2mm \topsep 0.2mm \itemindent -0mm \leftmargin=5mm}
\item $X$ is an interval containing a finite or countable
collection of disjoint intervals $X_i$ \st $F$ maps each $X_i$
diffeomorphically onto $X$, with bounded distortion  on all iterates (i.e. there
exists $K>0$ so that if there exist $i_0, \ldots, i_{n-1}$ and $x,y$ such that $F^j(x), F^j(y)\in X_{i_j}$ for $j=0, 1, \ldots, n-1$ then $1/K\le DF^n(x)/DF^n(y) \le K$); %\textbf{GI: in order to avoid confusion I changed "k" by "j"}
\item $\tau|_{X_i} = \tau_i$ for some $\tau_i \in \N$ and $F|_{X_i} = f^{\tau_i}$.  If $x \notin \cup_iX_i$ then $\tau(x)=\infty$.
\end{list}
The function $\tau:\cup_i X_i \to \N$ is called the {\em inducing time}. It may
happen that $\tau(x)$ is the first return time of $x$ to $X$, but
that is certainly not the general case.  For ease of notation, we will frequently write $(X,F)=(X,F,\tau)$.  We denote the set of points $x\in I$ for which there exists $k\in \N$ such that $\tau(F^n(f^k(x)))<\infty$ for all $n\in \N$ by $(X,F)^\infty$.

Given an inducing scheme $(X,F, \tau)$, we say that a  probability measure $\mu_F$ is a \emph{lift} of $\mu$ if for any $\mu$-measurable subset $A\subset I$,
\begin{equation} \mu(A) = \frac1{\int_X \tau \ d\mu_F} \sum_i \sum_{k = 0}^{\tau_i-1} \mu_F( X_i \cap f^{-k}(A)). \label{eq:lift}
\end{equation}
Conversely, given a measure $\mu_F$ for $(X,F)$, we say that
$\mu_F$ \emph{projects} to $\mu$ if \eqref{eq:lift} holds.  Note that if \eqref{eq:lift} holds then $\mu_F$ is $F$-invariant if and only if $\mu$ is $f$-invariant.  We call a measure
$\mu$  \emph{compatible with} the inducing scheme $(X,F,\tau)$ if

\begin{list}{$\bullet$}{\itemsep 1.0mm \topsep 0.0mm \leftmargin=5mm}
%{\itemsep 1.0mm \topsep 0.0mm}\setlength{\itemindent=-7mm}
\item $\mu(X)> 0$ and $\mu\left(X \setminus (X,F)^\infty\right) = 0$; and
\item there exists a measure $\mu_F$ which projects to $\mu$ by
\eqref{eq:lift}: in particular $\int_X \tau \ d\mu_F <
\infty$ (equivalently $\mu_F\in \M_F(-\tau)$).
\end{list}

\begin{rem}
Given an ergodic measure $\mu\in \M_f$ with positive Lyapunov exponent there exists an inducing scheme $(X, F, \tau)$ with a corresponding $F-$invariant measure $\mu_F$, see for example \cite[Theorem 3]{BTeqnat}.
\end{rem}

\begin{defi}
Let $(X, F, \tau)$ be an inducing scheme for the map $f$.  Then for a potential $\phi:I\to \R$,  the induced potential $\Phi$ for $(X,F, \tau)$ is given by $$\Phi(x)=\Phi^F(x):=S_{\tau(x)}\phi(x).$$
\end{defi}

Note that in particular for the potential $\log|Df|$, the induced potential for a scheme $(X,F)$ is $\log|DF|$. Moreover, the map $x\mapsto \log|DF(x)|$ has summable variations (see for example \cite[Lemma 8]{BTeqnat}).

\begin{rem} \label{rmk:conj to shift}
Let $(X, F, \tau)$ be some inducing scheme for the map $f$.  We suppose that $\bd X\notin (X,F)^\infty$.  Then the system $F:(X,F)^\infty \to (X,F)^\infty$ is topologically conjugated to the full-shift on a countable alphabet.
\end{rem}

For an inducing scheme $(X, F,\tau)$ and a potential $\phi:X \to [-\infty, \infty]$ with summable variations, we can define the Gurevich pressure as in Section~\ref{sec:CMS}, and denote it by
$$P_F^G(\phi),$$
where we drop the subscript if the dynamics is clear.

In fact the domains for the inducing schemes used above come from the natural cylinder structure of the map $f\in \F$.  More precisely, the domains $X$ are $n$-cylinders coming from the so-called \emph{branch partition}: the set $\P_1^f$ consisting of maximal intervals on which $f$ is monotone.  So if two domains $\cyl_1^i, \cyl_1^j\in \P_1^f$ intersect, they do so only at elements of $\crit$. The set of corresponding $n$-cylinders is denoted $\P_n^f:=\vee_{k=1}^nf^{-k}\P_1$.  We let $\P_0^f:=\{I\}$.  For an inducing scheme $(X,F)$ we use the same notation for the corresponding $n$-cylinders $\P_n^F$.  Note the transitivity assumption on our maps $f$ implies that $\P_1^f$ is a generating partition for any Borel probability measure.

%\textbf{MT: I propose we delete some or all of the following paragraph -I deleted the,, but added a remark and a line above}

%Given an ergodic measure $\mu\in \M_f$ with positive Lyapunov exponent there exists an inducing scheme $(X, F, \tau)$ with a corresponding $F-$invariant measure $\mu_F$, see for example \cite[Theorem 3]{BTeqnat}.  We denote by $P_F(\cdot)$ the topological pressure corresponding to the dynamical system $(X,F, \tau)$. Let
%\[P_F(t,q):=P_F(-t \log|DF| -q \tau).\]
%\textbf{MT: do we use this notation enough to make it worth it?}
%Since the underlying system is a full-shift on a countable alphabet, and since $x\mapsto \log|DF(x)|$ has summable variations (see for example \cite[Lemma 8]{BTeqnat}), Proposition~\ref{prop:VarPri} implies that whenever the Gurevich pressure of the symbolic representation of the potential $-t \log|DF| -q \tau$ is finite, then this quantity is equal to $P_F(t,q)$.  Moreover $P_F(t,q)$ is real analytic in each variable (see \cite{Sarphase,StraUrb}).
%We finish this section with a lemma, which improves on an argument used in the proof of \cite[Proposition 1]{BTeqnat}.

\section{Zero pressure schemes} \label{sec:zero press}

For $t\in \R$, we let $$\psi_t:=-t\log|Df|-p(t).$$  Similarly, for an inducing scheme $(X,F)$ the induced potential is $\Psi_t$.
As in \cite{PeSe, BTeqgen, BTeqnat} in order to apply Sarig's theory we need to find an inducing scheme $(X, F, \tau)$ so that $P^G(\Psi_t)=0$.  Then Sarig's theory gives a Gibbs measure for $(X, F, \Psi_t)$, which if it projects to a measure in $\M_f$ by \eqref{eq:lift}, must be an equilibrium state by the Abramov formula.  The main purpose of this section is to show that there are inducing schemes with $P^G(\Psi_t)=0$.

We note that a major difficulty when working with inducing schemes is that, in general, no single inducing scheme is compatible with all measures of positive Lyapunov exponent. As a direct consequence of work by Bruin and Todd \cite[Remark 6]{BTeqnat} we obtain in Lemma \ref{lem:bdd ind int} that there exists a finite number of inducing schemes for which any measure of entropy bounded away from zero is compatible with one of them. This will allow us to prove that for each $t\in (t^-,t^+)$  there exists an inducing scheme for which $P(\Psi_t)=0$ and such that the pressure, $p(t)$, can be approximated with $f$-invariant measures of positive entropy compatible with the inducing scheme.

\begin{prop}
For each $t\in (t^-,t^+)$, there exist an inducing scheme $(X,F)$
and a sequence $(\mu_n)_n\subset \M_f$ all compatible with $(X,F)$ and such that $$h(\mu_n)-t\lambda(\mu_n)\to p(t) \text{ and } \inf_n h(\mu_n)>0.$$  Moreover, $P^G(\Psi_t)=0$.
\label{prop:zero Gur}
\end{prop}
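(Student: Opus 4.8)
The plan is to produce the inducing scheme and the measures in two stages, first handling the measures and then reading off the pressure statement. For the measures: fix $t\in(t^-,t^+)$. By definition of $t^-$ and $t^+$ we have $p(t)>-\lambda_M t$ and $p(t)>-\lambda_m t$, so there is a sequence of measures $\nu_n\in\M_f$ with $h(\nu_n)-t\lambda(\nu_n)\to p(t)$ and, crucially, these can be chosen with $\lambda(\nu_n)$ bounded away from both $\lambda_m$ and $\lambda_M$ — in particular with $\lambda(\nu_n)>0$ bounded away from $0$ (using Remark~\ref{rmk:prz} that $\lambda_m\ge 0$). Once $t\lambda(\nu_n)$ is bounded, $h(\nu_n)$ is bounded away from zero: indeed $h(\nu_n)=h(\nu_n)-t\lambda(\nu_n)+t\lambda(\nu_n)\to p(t)+t\cdot(\text{something bounded away from the extremes})$, and a short case analysis on the sign of $t$ shows $\liminf h(\nu_n)>0$ (if $h(\nu_n)\to 0$ along a subsequence then $h(\nu_n)-t\lambda(\nu_n)$ would be squeezed to $-t\lambda$ for a limiting $\lambda$, contradicting $p(t)>-\lambda_M t$ when $t>0$ and $p(t)>-\lambda_m t$ when $t<0$; the case $t=0$ is immediate since $p(0)=h_{top}>0$ by transitivity).

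Next I would invoke Lemma~\ref{lem:bdd ind int} (the consequence of \cite[Remark 6]{BTeqnat}): there is a \emph{finite} collection of inducing schemes such that every $f$-invariant measure with entropy bounded below by a fixed constant $\delta>0$ is compatible with one of them. Applying this with $\delta=\tfrac12\liminf h(\nu_n)$, by the pigeonhole principle infinitely many of the $\nu_n$ are compatible with a single inducing scheme $(X,F)$ from that finite list. Relabel this subsequence as $(\mu_n)_n$. This gives the first assertions: $h(\mu_n)-t\lambda(\mu_n)\to p(t)$, all $\mu_n$ compatible with $(X,F)$, and $\inf_n h(\mu_n)>0$.

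Finally, for $P^G(\Psi_t)=0$: each $\mu_n$ lifts to an $F$-invariant measure $\mu_{n,F}$ with $\int_X\tau\,d\mu_{n,F}<\infty$, and by the Abramov formula the induced free energies satisfy $h(\mu_{n,F})+\int\Psi_t\,d\mu_{n,F}=\left(\int_X\tau\,d\mu_{n,F}\right)\bigl(h(\mu_n)-t\lambda(\mu_n)-p(t)\bigr)$. Since $h(\mu_n)-t\lambda(\mu_n)-p(t)\to 0$ and $h(\mu_n)-t\lambda(\mu_n)\le p(t)$ always, each such free energy is $\le 0$, so by the Variational Principle for the countable Markov shift (Proposition~\ref{prop:VarPri}, via Remark~\ref{rmk:conj to shift} identifying $(X,F)$ with a full shift and noting $\log|DF|$ has summable variations, hence so does $\Psi_t$) we get $P^G(\Psi_t)\le 0$. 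For the reverse inequality, the free energies $\left(\int\tau\,d\mu_{n,F}\right)(h(\mu_n)-t\lambda(\mu_n)-p(t))$ tend to $0$ from below provided $\int\tau\,d\mu_{n,F}$ does not blow up too fast; more robustly, one uses that $h(\mu_n)-t\lambda(\mu_n)\to p(t)$ together with the standard fact (as in \cite{PeSe,BTeqgen,BTeqnat}) that the induced pressure dominates the original pressure in the sense $P^G(\Psi_t)\ge \sup_n\{h(\mu_{n,F})+\int\Psi_t\,d\mu_{n,F}\}/\sup_n\int\tau\,d\mu_{n,F}$-type bounds are not needed — instead, since each lifted measure realizes a nonpositive induced free energy and these approach $0$, and since $P^G(\Psi_t)$ is the supremum of induced free energies over $\M_F(-\tau)$, we conclude $P^G(\Psi_t)\ge \limsup_n \bigl(h(\mu_{n,F})+\int\Psi_t\,d\mu_{n,F}\bigr)=0$. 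Combining, $P^G(\Psi_t)=0$.

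The main obstacle I anticipate is the careful bookkeeping in the first stage: ensuring the approximating measures $\nu_n$ can be chosen with Lyapunov exponents strictly inside $(\lambda_m,\lambda_M)$ so that entropy is uniformly bounded below — this is exactly where the open condition $t\in(t^-,t^+)$ is used, and the sign analysis in $t$ must be done honestly. The passage $P^G(\Psi_t)\ge 0$ also requires knowing the lifted free energies converge to $0$ and not merely stay nonpositive; this follows because one can arrange (again using compatibility and the structure of the inducing scheme) that $\int\tau\,d\mu_{n,F}$ stays comparable to a fixed constant, or alternatively one appeals directly to the approximation property of Gurevich pressure (Proposition~\ref{prop:approx}) restricted to the compact invariant subsets supporting truncations of the $\mu_{n,F}$.
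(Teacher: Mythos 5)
Your proposal follows the same overall strategy as the paper: use the openness of $(t^-,t^+)$ to force a uniform entropy lower bound for a free-energy–maximizing sequence, pigeonhole onto one inducing scheme from the finite cover of Lemma~\ref{lem:bdd ind int}, and then read off $P^G(\Psi_t)=0$ via the Abramov formula. However, the argument for $P^G(\Psi_t)\le 0$ contains a genuine gap. You show that the lifted measures $\mu_{n,F}$ each have nonpositive induced free energy, and then invoke the Variational Principle (Proposition~\ref{prop:VarPri}) to conclude $P^G(\Psi_t)\le 0$. This does not follow: the Variational Principle expresses $P^G(\Psi_t)$ as a supremum over \emph{all} of $\M_F(\Psi_t)$, and controlling the free energy of your specific sequence gives no bound on that supremum. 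What is needed is the paper's Lemma~\ref{lem:press less zero}, whose proof runs in the opposite direction: assuming $P^G(\Psi_t)>0$, the approximation property (Proposition~\ref{prop:approx}) produces a compactly supported $F$-invariant measure with strictly positive induced free energy and finite $\int\tau$; projecting this down via \eqref{eq:lift} and Abramov yields an $f$-invariant measure whose free energy w.r.t. $\psi_t$ exceeds $0$, contradicting the Variational Principle for $p(t)$. You should cite or reproduce that argument rather than appealing to Proposition~\ref{prop:VarPri} directly.

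Two smaller points. First, your sign cases are swapped: if $h(\nu_n)\to 0$ and $h(\nu_n)-t\lambda(\nu_n)\to p(t)$, so that $p(t)=-t\lambda$ for a limit value $\lambda\in[\lambda_m,\lambda_M]$, then for $t>0$ the inequality $\lambda\ge\lambda_m$ gives $p(t)=-t\lambda\le -t\lambda_m$, contradicting $p(t)>-\lambda_m t$ (which is the $t<t^+$ condition); for $t<0$ the contradiction comes from $\lambda\le\lambda_M$ and $p(t)>-\lambda_M t$ (the $t>t^-$ condition). Second, the assertion that the approximating measures can be \emph{chosen} with $\lambda(\nu_n)$ bounded away from both $\lambda_m$ and $\lambda_M$ is neither needed nor obviously true (e.g.\ one could have $\lambda(\nu_n)\to\lambda_m$ while the entropies stay bounded below); the indirect argument you give next is the right one and is in the same spirit as the paper's supporting-line proof of Lemma~\ref{lem:pos ent}. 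Finally, for the $P^G(\Psi_t)\ge 0$ direction you should use explicitly the uniform bound $\int\tau\,d\mu_{n,F}<\theta$ supplied by Lemma~\ref{lem:bdd ind int}, since it is exactly this bound that makes the Abramov identity force the induced free energies to $0$ from below.
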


We need some lemmas and a definition for the proof.

\begin{lema} \label{lem:press less zero}
For each $t \in \R$ and any inducing scheme $(X,F)$, we have $P^G(\Psi_t)\le 0.$
\end{lema}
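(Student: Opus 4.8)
Let me think about what's being claimed. We have $\psi_t = -t\log|Df| - p(t)$ where $p(t) = P(-t\log|Df|)$ is the pressure over $\mathcal{M}_f$. For an inducing scheme $(X,F,\tau)$, the induced potential is $\Psi_t = S_\tau \psi_t = -t\log|DF| - p(t)\tau$ (on each $X_i$, since $\tau$ is constant $\tau_i$ there, $S_{\tau_i}\psi_t(x) = -t\log|Df^{\tau_i}(x)| - p(t)\tau_i = -t\log|DF(x)| - p(t)\tau_i$). We want to show the Gurevich pressure $P^G_F(\Psi_t) \le 0$.

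The Gurevich pressure is $P^G_F(\Psi_t) = \lim_{n\to\infty}\frac1n \log Z_n(\Psi_t, C_{i_0})$ where $Z_n(\Psi_t, C_{i_0}) = \sum_{F^n x = x} \exp(S_n^F \Psi_t(x)) \chi_{C_{i_0}}(x)$, $S_n^F$ being the Birkhoff sum with respect to $F$.

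**Approach.** The standard way: use the Approximation Property (Proposition 2.4) which says $P^G_F(\Psi_t) = \sup\{P_{F|K}(\Psi_t): K \subset (X,F)^\infty \text{ compact } F\text{-invariant}\}$. So it suffices to show $P_{F|K}(\Psi_t) \le 0$ for each such $K$. On a compact invariant set $K$, the classical variational principle gives $P_{F|K}(\Psi_t) = \sup\{h_\nu(F) + \int \Psi_t \, d\nu : \nu \in \mathcal{M}_{F|K}\}$. Now given such a $\nu$ (an $F$-invariant measure supported on $K$, with $\int \tau\, d\nu < \infty$ automatically since $\tau$ is bounded on $K$, as $K$ is compact and disjoint from $\partial X$... actually need $\tau$ bounded on $K$ — since $K$ is a compact subset of $(X,F)^\infty$ which is conjugate to a full shift, $K$ meets only finitely many cylinders $X_i$, so $\tau \le \max \tau_i < \infty$ on $K$). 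Then $\nu$ projects via the Abramov/Kac construction to an $f$-invariant probability measure $\bar\nu \in \mathcal{M}_f$ using \eqref{eq:lift}, with $\bar\nu = \frac{1}{\int\tau\,d\nu}\sum_i\sum_{k=0}^{\tau_i-1}(f^k)_*(\nu|_{X_i})$. The Abramov formula gives $h_\nu(F) = (\int\tau\,d\nu)\, h_{\bar\nu}(f)$ and $\int \Psi_t\,d\nu = \int S_\tau \psi_t\, d\nu = (\int\tau\,d\nu)\int\psi_t\,d\bar\nu$. Therefore
\[
h_\nu(F) + \int\Psi_t\,d\nu = \left(\int\tau\,d\nu\right)\left(h_{\bar\nu}(f) + \int\psi_t\,d\bar\nu\right) = \left(\int\tau\,d\nu\right)\left(h_{\bar\nu}(f) - t\lambda(\bar\nu) - p(t)\right).
\]
By the variational principle defining $p(t)$, we have $h_{\bar\nu}(f) - t\lambda(\bar\nu) \le p(t)$ (provided $-\int(-t\log|Df|)\,d\bar\nu = t\lambda(\bar\nu) < \infty$, which holds since $\log|Df|$ is bounded... wait, not near critical points for $t<0$; but for $t \ge 0$, $\log|Df| \le \log\|Df\|_\infty$, and for $t<0$ we need $\lambda(\bar\nu)<\infty$ — handle by noting $\bar\nu$ is a projection of a measure on a compact piece so $\int\log|Df|\,d\bar\nu<\infty$ anyway, or argue the measure has bounded induced derivative). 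So the bracketed term is $\le 0$, and since $\int\tau\,d\nu > 0$, we get $h_\nu(F) + \int\Psi_t\,d\nu \le 0$. Taking the supremum over $\nu$ gives $P_{F|K}(\Psi_t) \le 0$, and then over $K$ gives the claim.

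**Main obstacle.** The technical care needed is in the finiteness conditions: ensuring $\int\psi_t\,d\bar\nu$ is well-defined (i.e., $-\int\psi_t\,d\bar\nu < \infty$, equivalently $\lambda(\bar\nu)<\infty$ when $t<0$) so that $\bar\nu$ is an admissible competitor in the variational principle for $p(t)$, and that the Abramov formula applies. Restricting to compact $K$ (via Proposition 2.4) is precisely what makes this clean, since then $\tau$ is bounded on $K$ and $\bar\nu$ has induced derivative and return time under control. One should also double-check that $\bar\nu$ is genuinely a probability measure in $\mathcal{M}_f$ and that the entropy relation holds in this $C^{1+}$ setting — but these are standard (Abramov, and the fact noted in the excerpt that $\log|DF|$ has summable variations, which combined with the bounded distortion property $K$-inheriting makes everything go through). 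An alternative, more hands-on route avoiding the variational principle: directly estimate $Z_n(\Psi_t, C_{i_0})$ by relating periodic orbits of $F$ to periodic orbits of $f$ and using that the $f$-pressure controls the exponential growth of weighted periodic orbit sums — but this requires a periodic-orbit version of the pressure bound for $f$, which is less readily available than the variational principle, so I would go with the compact-exhaustion argument above.
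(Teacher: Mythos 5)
Your proof is essentially the paper's own argument, just phrased as a direct inequality rather than by contradiction: the paper assumes $P^G_F(\Psi_t)>0$, uses the approximation property (Proposition~\ref{prop:approx}) to pass to a finite-branch truncation $(X^N, F_N)$ on which there is an equilibrium state, projects by the Abramov formula, and contradicts the variational principle, whereas you take the supremum over compact invariant sets $K$ (which is the same reduction, since a compact $F$-invariant $K$ in the countable full shift meets only finitely many cylinders), project each competitor measure, and conclude $P_{F|K}(\Psi_t)\le 0$ directly. Your extra care about finiteness of $\int\tau\,d\nu$ and $\lambda(\bar\nu)$ is a genuine improvement in rigour that the paper glosses over (for $t<0$ one invokes Remark~\ref{rmk:prz}, $\lambda_m\ge 0$, to see that projected measures are admissible competitors), but the mathematical content and the key lemmas used are the same.
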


\begin{proof}
We let  $(X^N,F_N, \tau_N)$ denote the subsystem of $(X,F, \tau)$ where $X^N=\cup_{n=1}^NX_n$ and $F_N, \tau_N$ are the restrictions of $F, \tau$ to $X^N$.  Similarly, $P_{F_N}^G(\Psi_t)$ is defined in the obvious way.  By  Proposition \ref{prop:approx}, $P_F^G(\Psi_t)>0$ implies that for large enough $N$, $P_{F_N}^G(\Psi_t)>0$.  Hence there is an equilibrium state $\mu_{F_N}$ for this system so that  $\int\tau_N~d\mu_{F_N}<\infty$ and $$h(\mu_{F_N})-t\int \log|DF|~d\mu_{F_N}-p(t) \int \tau_N~d\mu_{F_N}>0.$$  Similarly to the use of the Abramov formula above, the corresponding projected measure $\mu_{f_N}$ as in \eqref{eq:lift} has
\[  h(\mu_{f_N}) -t \int \log |Df|~d\mu_{f_N}> p(t).    \]
This contradiction to the Variational Principle proves the lemma.
\end{proof}

\begin{rem}
By \cite[Lemma 8]{BTeqnat}, the potentials $\Psi_t$ we consider for the inducing schemes $(X,F)$ in Lemma~\ref{lem:bdd ind int} are weakly H\"older continuous.
\label{rmk:weak Holder}
\end{rem}

\begin{defi}
Given a function $g:[a,b] \to \R$, for $x_0\in \R$,  as in \cite[p115]{Royden}, we refer to $s:[a,b] \to \R$ as a \emph{supporting line for $g$ at $x_0$} if $s(x)=g(x_0)+p(x-x_0)$ for some $p\in \R$, and $g(x)\ge s(x)$ for all $x\in \R$.
\end{defi}

\begin{lema}
For each $t\in (t^-,t^+)$, there exists $\eta>0$ such that any measure $\mu$ with free energy w.r.t. $\psi_t$ close enough to 0 has $h(\mu)>\eta$. \label{lem:pos ent}
\end{lema}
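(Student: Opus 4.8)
The plan is to argue by contradiction: suppose no such $\eta>0$ exists. Then there is a sequence $(\mu_n)_n\subset\M_f$ with free energy with respect to $\psi_t$ tending to $0$, i.e. $h(\mu_n)-t\lambda(\mu_n)-p(t)\to 0$, but with $h(\mu_n)\to 0$. By passing to a subsequence we may assume $h(\mu_n)\to 0$, and since $h(\mu_n)-t\lambda(\mu_n)\to p(t)$, this forces $t\lambda(\mu_n)\to -p(t)$. We now split into the cases $t\ge 0$ and $t<0$, using that $t\in(t^-,t^+)$ with $t^-=-\infty$ (so $t>t^-$ is automatic) and that, by definition of $t^+$, $p(t)>-\lambda_m t$ for $t>0$ and similarly $p(t)>-\lambda_M t$ whenever $t<t^-$ fails, i.e. always for $t\le 0$ the relevant bound is $p(t)\ge -\lambda_M t$. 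The key inequalities are: for $t\ge 0$, $h(\mu_n)-t\lambda(\mu_n)\le h(\mu_n)-t\lambda_m$, and for $t\le 0$, $h(\mu_n)-t\lambda(\mu_n)\le h(\mu_n)-t\lambda_M$ (here we use $\lambda(\mu_n)\ge\lambda_m\ge 0$ from Remark~\ref{rmk:prz}, and $\lambda(\mu_n)\le\lambda_M$).

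First I would treat $t\ge 0$ (the case $t=0$ being trivial since $p(0)$ equals the topological entropy, which is positive by transitivity and presence of a horseshoe, so the free energy being near $0$ forces $h(\mu_n)$ near $p(0)>0$). For $t>0$: since $h(\mu_n)\to 0$ and $h(\mu_n)-t\lambda(\mu_n)\to p(t)$, we get $\liminf_n(-t\lambda(\mu_n))\ge p(t)$, i.e. $\limsup_n\lambda(\mu_n)\le -p(t)/t$. On the other hand $\lambda(\mu_n)\ge\lambda_m$, so $\lambda_m\le -p(t)/t$, that is $p(t)\le -t\lambda_m$. But the definition \eqref{eq:t plus minus} of $t^+$ together with $t<t^+$ gives $p(t)>-\lambda_m t$ — more precisely, one needs the standard fact (proved elsewhere in the paper, e.g. via the variational characterisation of $t^+$) that $p(t)>-\lambda_m t$ strictly for all $t<t^+$. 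This is the contradiction. For $t<0$ the symmetric argument applies, using $\lambda(\mu_n)\le\lambda_M$ to get $p(t)\le -t\lambda_M$, contradicting the strict inequality $p(t)>-\lambda_M t$ which holds for all $t>t^-=-\infty$ by the definition of $t^-$.

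The main obstacle I anticipate is ensuring the \emph{strict} inequalities $p(t)>-\lambda_m t$ (for $0\le t<t^+$) and $p(t)>-\lambda_M t$ (for $t\le 0$), since the definitions \eqref{eq:t plus minus} only immediately give them as suprema/infima. One must check that these sets are open, or equivalently that $p$ lies strictly above the linear function $-\lambda_m t$ on the open interval $(-\infty,t^+)$; this follows from convexity of $p$ (cited from \cite[Theorem 9.7]{Waltbook}) together with the fact that $t\mapsto -\lambda_m t$ is a supporting line only beyond $t^+$. A clean way to package this, matching the \emph{supporting line} definition introduced just above the lemma, is: if $p(t_1)=-\lambda_m t_1$ for some $t_1<t^+$, then since $p(t)\ge -\lambda_m t$ everywhere (as $h(\mu)-t\lambda(\mu)\ge -t\lambda_m$ for $t\ge 0$ and $\lambda(\mu)\ge\lambda_m$), convexity would force $p(t)=-\lambda_m t$ for all $t\ge t_1$, contradicting $t_1<t^+=\sup\{t:p(t)>-\lambda_m t\}$. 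The same convexity/supporting-line argument handles the $t\le 0$ side. Once these strict inequalities are in hand, the contradiction argument above closes, and one obtains the desired uniform lower bound $\eta$ on entropy; taking $\eta$ to be, say, half the gap is not quite right since $\eta$ must be uniform, but the contradiction argument gives existence of \emph{some} $\eta>0$ directly.
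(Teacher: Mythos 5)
Your proposal is correct and is essentially the same argument as the paper's. The paper argues by contradiction, packages the limiting line $L(t)=-\bigl(\lim_n\lambda(\mu_n)\bigr)\,t$ as a supporting line for $p$ through the origin with $p(t_0)=L(t_0)$, and then observes that this forces $p$ to coincide with $L$ on a half-line, contradicting $t_0\in(t^-,t^+)$; your version reaches the same contradiction by explicitly extracting $\lim_n\lambda(\mu_n)=-p(t)/t$ and comparing against $\lambda_m$ (resp.\ $\lambda_M$), and then supplies the convexity/supporting-line step needed to upgrade the definitions in \eqref{eq:t plus minus} to the \emph{strict} inequalities $p(t)>-\lambda_m t$ for $t<t^+$ and $p(t)>-\lambda_M t$ for $t>t^-$. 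The one thing worth tightening is the justification of that last step: the clean way is to note that every line $t\mapsto h(\mu)-t\lambda(\mu)$ has slope in $[-\lambda_M,-\lambda_m]$, so $p$ (being their supremum) has all one-sided slopes in $[-\lambda_M,-\lambda_m]$; combined with $p\ge -\lambda_m t$ and $p\ge -\lambda_M t$ this gives exactly the rigidity you need (if $p$ touches $-\lambda_m t$ at $t_1$ it equals it on $[t_1,\infty)$, and symmetrically on the left), which is the same convexity input the paper uses implicitly.
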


\begin{proof}
Let $t_0 \in (t^-, t^+)$.
Suppose, by contradiction, that there exists a sequence of invariant measures $\mu_n$ such that $\lim_{n \to \infty} h(\mu_n)=0$ and
$$p(t_0)=\lim_{n \to \infty} \left( h(\mu_n) -t_0\lambda(\mu_n)   \right).$$
Denote by $L(t)$ the line passing through the origin which corresponds to the limit
$$\lim_{n \to \infty}  h(\mu_n) -t  \lim_{n \to \infty} \lambda(\mu_n).$$
We have that $L(t)$ is  a supporting line of the pressure $p(t)$ and $p(t_0)= L(t_0)$.
 Since $\lim_{n \to \infty} h(\mu_n)=0$, for every $t \ge t_0$ we have $p(t)=L(t)$. But this implies that $t_0= t^+$. This contradiction proves the statement.
\end{proof}
\begin{lema}
For each $\epsilon>0$ there exists $\theta>0$ and a finite number of inducing schemes $\{(X^n, F_n, \tau_n)\}_{n=1}^N$ such that any ergodic measure with $h(\mu)>\epsilon$ is compatible with one of these schemes $(X^n, F_n, \tau_n)$ and $\int\tau_n~d\mu_{F_n}<\theta$.
\label{lem:bdd ind int}
\end{lema}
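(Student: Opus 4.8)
The statement is essentially a reformulation of \cite[Remark 6]{BTeqnat}, and the plan is to extract from the inducing‑scheme construction there the two features that are genuinely at stake here: that \emph{finitely many} schemes suffice, and that the average inducing time $\int\tau_n\,d\mu_{F_n}$ is bounded \emph{uniformly} over the relevant measures. The first thing I would do is reduce the entropy hypothesis to a Lyapunov hypothesis: by the Ruelle inequality $h(\mu)\le\max\{0,\lambda(\mu)\}$ for every $\mu\in\M_f$, and $\lambda(\mu)\ge 0$ by Remark~\ref{rmk:prz}, so $h(\mu)\le\lambda(\mu)$. Hence any ergodic $\mu$ with $h(\mu)>\epsilon$ has $\lambda(\mu)>\epsilon$, and it is enough to prove the assertion with $h(\mu)>\epsilon$ replaced by $\lambda(\mu)>\epsilon$.

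Next I would invoke the theory of hyperbolic (Pliss) times. Applying Birkhoff's theorem to $\log|Df|$ and then the Pliss lemma, one gets a scale $\delta=\delta(\epsilon)>0$ and a frequency $\rho=\rho(\epsilon)>0$, \emph{neither depending on $\mu$}, such that for every ergodic $\mu$ with $\lambda(\mu)>\epsilon$ and $\mu$‑a.e.\ $x$, the set of $\delta$‑hyperbolic times of $x$ — times $n$ with $|Df^n(x)|\ge e^{\epsilon n/2}$, with the inverse branch of $f^n$ at $f^n(x)$ defined on an interval of definite size with uniformly bounded distortion, and $\dist(f^n(x),\Crit)\ge\delta$ — has density at least $\rho$. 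The distortion estimates, and the ``bounded distortion on all iterates'' required of an inducing scheme, come from the Koebe lemma and the negative Schwarzian condition, which remain valid for $C^{1+}$ maps (Remark~\ref{rmk:c2 vs c1}); this is exactly the construction of \cite{BrCMP, BTeqgen, BTeqnat}. The point I would stress is that \emph{once $\delta$ is fixed} there are only finitely many candidate domains: fix a finite family of pairwise disjoint intervals $X^1,\dots,X^N$, each of length small compared to $\delta$, avoiding $\Crit$, covering $\{x:\dist(x,\Crit)\ge\delta\}$, with $N=N(\epsilon)$; the construction then equips each $X^m$ with an inducing scheme $(X^m,F_m,\tau_m)$ whose domain $R_m\subseteq X^m$ is ``seen'' by every $\delta$‑hyperbolic time landing in $X^m$.

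It then remains to pick, for a given ergodic $\mu$ with $\lambda(\mu)>\epsilon$, the right scheme and to get the uniform bound. Since $\mu$‑a.e.\ point has density $\ge\rho$ of $\delta$‑hyperbolic times, all of which land in exactly one of the $X^m$, we have $\sum_{m}\mu(R_m)\ge\rho$, so $\mu(R_m)\ge\rho/N$ for some $m$ (a single $m$, by ergodicity). For that $m$: the measure $\mu_{F_m}:=\mu|_{R_m}/\mu(R_m)$ is $F_m$‑invariant, projects to $\mu$ via \eqref{eq:lift}, and $\mu\bigl(X^m\setminus(X^m,F_m)^\infty\bigr)=0$ because $\mu$‑a.e.\ point returns to $R_m$ infinitely often; hence $\mu$ is compatible with $(X^m,F_m,\tau_m)$. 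Finally, since the induced return times tile the orbit of a generic point, the Birkhoff averages $\frac1j\sum_{i<j}\tau_m(F_m^i x)$ converge to $1/\mu(R_m)$, so $\int\tau_m\,d\mu_{F_m}=1/\mu(R_m)\le N/\rho$, and one may take $\theta:=N(\epsilon)/\rho(\epsilon)$.

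The main obstacle, and the place where all the weight of the argument lies, is the uniformity in $\mu$: everything depends on the Pliss lemma delivering a single scale $\delta$ and a single frequency $\rho$ that depend only on the lower bound $\epsilon$ on the Lyapunov exponent. This is precisely what both caps the list $X^1,\dots,X^N$ at a finite length and bounds $\int\tau_m\,d\mu_{F_m}=1/\mu(R_m)$ from above for every admissible $\mu$ simultaneously. The remaining ingredients — distortion control in the merely $C^{1+}$ category, and the verification that $\mu_{F_m}$ is a genuine lift of $\mu$ — are standard and are carried out in detail in \cite{BrCMP, BTeqgen, BTeqnat}.
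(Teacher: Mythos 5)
The first step of your argument --- replacing the hypothesis $h(\mu)>\epsilon$ by $\lambda(\mu)>\epsilon$ via the Margulis--Ruelle inequality --- is a valid logical reduction, but it reduces to a statement that is \emph{false}, and this is where your proof goes wrong.  To see this, take $f\in\F$ Collet--Eckmann, so that the critical value has Lyapunov exponent bounded below by some $\lambda_c>0$, and fix $\epsilon<\lambda_c$.  There exist periodic orbits $p_n$ of period $T_n\to\infty$ which $\delta_n$-shadow the forward critical orbit $c,f(c),\dots,f^{S_n}(c)$ for $S_n$ iterates with $S_n/T_n\to1$, and whose Lyapunov exponents converge to the Lyapunov exponent of the critical value; for $n$ large these give ergodic measures $\mu_n$ with $\lambda(\mu_n)>\epsilon$ and $h(\mu_n)=0$.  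In the Hofbauer extension these $\mu_n$ lift to measures concentrated at level $\sim S_n\to\infty$, and for any fixed inducing scheme $(X,F,\tau)$ one has $\int\tau\,d\mu_{F,n}\to\infty$: there is no finite family of schemes and finite $\theta$ capturing all of them.  The lemma therefore genuinely needs the entropy hypothesis, not just a Lyapunov bound.

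The concrete place this shows up inside your argument is the claim that Pliss's lemma produces a single scale $\delta=\delta(\epsilon)$ and frequency $\rho=\rho(\epsilon)$, uniform over all $\mu$ with $\lambda(\mu)>\epsilon$, for hyperbolic times at which the inverse branch of $f^n$ has definite size.  Pliss's lemma does give a uniform density of \emph{expansion} times (depending only on $\epsilon$ and $\log\|Df\|_\infty$), but passing from a Pliss time to an inverse branch of $f^n$ defined on a $\delta$-interval with bounded distortion additionally requires control of $\dist(f^j(x),\Crit)$ along the backward orbit --- a slow-recurrence condition.  That is not a consequence of $\lambda(\mu)>\epsilon$, and it fails uniformly for the shadowing orbits above, precisely because they pass extremely close to $\Crit$ at a fixed place in every period.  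You correctly identified the uniformity of $\delta$ and $\rho$ as the crux, but you assumed it rather than proved it, and in fact it does not hold under the weakened hypothesis.

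The paper avoids this by not passing to the Lyapunov exponent.  It works with the Hofbauer extension and invokes \cite[Remark 6 and Lemma 4]{BTeqnat}: a measure with $h(\mu)>\epsilon$ lifts to $\hat\mu$ on the tower and, by a combinatorial/entropy estimate, must give a \emph{compact} part $\hat K_\epsilon$ of the tower mass at least $\eta(\epsilon)>0$.  The inducing schemes are first-return maps to finitely many pieces $\hat X^n$ covering $\hat K_\epsilon$, and Kac's lemma yields $\int\tau_n\,d\mu_{F_n}=\hat\mu(\hat X^n)^{-1}<\eta^{-1}=:\theta$.  The entropy hypothesis enters exactly where your Pliss-time uniformity fails: it is what prevents the lifted measure from escaping to infinity in the tower, which is the tower-side translation of ``slow recurrence holds uniformly''.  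If you want a hyperbolic-times proof, you would need to show (using entropy, not Lyapunov) that the density of hyperbolic times with a $\delta(\epsilon)$-sized inverse branch is uniformly positive over $\{\mu:h(\mu)>\epsilon\}$, which is essentially equivalent to the tower mass estimate you would be trying to avoid.
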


\begin{proof}
This follows from \cite[Remark 6]{BTeqnat}.  We give a brief sketch of the ideas there.  That remark gives, for $\epsilon>0$, a set $\{(X^n, F_n, \tau_n)\}_{n=1}^N$ such that for each $\mu\in \M_f$ with $h(\mu)>\epsilon$, $\mu$ must be compatible with some $(X^n, F_n, \tau_n)$.  These schemes are constructed from sets $\hat X^n$ on the so-called Hofbauer extension (see the appendix for details).  The map $F$ is derived from a first return map $\hat F$ in this tower.  Measures $\mu\in \M_f$ with $h(\mu)>0$ can be lifted to the tower, and if they have $h(\mu)>\epsilon$ they must give  one of the sets $\hat X_n$ mass greater than some $\eta=\eta(\epsilon)>0$.  Since $\hat F$ is a first return map with return time $\hat\tau_n$, we use Kac's lemma to get $$\int\tau_n~d\mu =\int\hat\tau_n~d\hat\mu=\hat\mu(\hat X_n)^{-1}<\eta^{-1},$$
as required.
\end{proof}

As in \cite[Remark 6]{BTeqnat}, we denote this set of inducing schemes by $Cover(\epsilon)$.

\begin{proof}[Proof of Proposition~\ref{prop:zero Gur}]
By Lemmas~\ref{lem:bdd ind int} and \ref{lem:pos ent}, we can take a sequence of ergodic measures $\mu_p$ such that $$h(\mu_p)+\int\psi_t~d\mu_p =\epsilon_p \text{ where } \epsilon_p \to 0 \text{ as } p\to \infty,$$ $h(\mu_p)>\eta$ (some $\eta>0$), all $\mu_p$  are compatible with some inducing scheme $(X,F, \tau)\in Cover(\epsilon)$ and $\int\tau~d\mu_p<\theta$  for all $p\in \N$.  This implies that $P^G(\Psi_t)\ge 0$ since we have a sequence of measures $\mu_{F,p}$ such that
$$h(\mu_{F,p})+\int\Psi_t~d\mu_{F,p} = \left(\int\tau~d\mu_{F,p}\right)\left(h(\mu_{p})+\int\psi_t~d\mu_{p}\right) \ge \theta\epsilon_p$$  %(recall $\epsilon_p$ are non-positive).

On the other hand $P^G(\Psi_t)\le 0$ by Lemma~\ref{lem:press less zero}.  So the proposition is proved.
\end{proof}

Since the inducing scheme $(X,F)$ can be coded by the full-shift on countably many symbols we have, as explained in Section~\ref{sec:CMS}, a Gibbs measure $\mu_{\Psi_t}$ for $\Psi_t$.   We need to show that this measure has  integrable inducing time and thus that it projects to a measure in $\M_f$.

\section{The Gibbs measure has integrable inducing times}
\label{sec:Gibbs integ}

This section is devoted to proving that the inducing time is  integrable with respect to the Gibbs measure constructed in Section \ref{sec:zero press}. In particular, this implies that the measure has finite entropy and that it is an equilibrium state for the induced potential.
 %But not only that,
It also implies that it can be projected to a measure in $\M_f$.

\begin{prop}
Let $t\in (t^-, t^+)$ and $\psi=\psi_t$.  Suppose that we have an inducing scheme $(X,\tilde F)$.% with distortion  bounded by $K\ge 1$ and $P^G(\tilde\Psi)=0$.
Then there exists $k\in \N$ such that replacing $(X, \tilde F)$ by $(X,F)$, where $F=\tilde F^k$, the following holds.  There exist $\gamma_0\in (0,1)$ and, for any cylinder $\cyl_n^j\in \P_n^F$ any $n\in \N$, a constant $\delta_{n}^j<0$ such that any measure $\mu_F\in \M_F$ with $$\mu_F(\cyl_n^j)\le (1-\gamma_0) m_\Psi(\cyl_n^j) \text{ or } \mu_F(\cyl_n^j) \ge \frac{m_\Psi(\cyl_n^j)}{1-\gamma_0},$$ where $m_\Psi$ denotes the conformal measure for the system $(X,F,\Psi)$, must have $h(\mu_F)+\int\Psi~d\mu_F\le \delta_n^j$. \label{prop:conv to Gibbs}
\end{prop}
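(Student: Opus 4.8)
The plan is to quantify how much free energy a measure $\mu_F$ must lose when it deviates, by a definite multiplicative factor $\gamma_0$, from the conformal/Gibbs measure $m_\Psi$ on a single cylinder $\cyl_n^j$. The underlying principle is the characterisation of the Gibbs measure as the unique equilibrium state (which here has zero free energy for the induced potential, since $P^G(\Psi_t)=0$ by Proposition~\ref{prop:zero Gur}): any measure that is "wrong" on a set of positive conformal measure must be strictly suboptimal, and we need this strictness to be uniform and explicit on each cylinder. The natural tool is the relative entropy (Kullback--Leibler divergence) / the convexity of $-x\log x$ together with the Gibbs property. Concretely, I would write the free energy of $\mu_F$ as a sum over the partition $\P_n^F$ of local contributions, compare term-by-term with the Gibbs measure $m_\Psi$ (which assigns to $\cyl_n^j$ mass comparable, up to the distortion constant, to $e^{S_n\Psi}$), and isolate the defect coming from the cylinder $\cyl_n^j$ on which $\mu_F$ is pinned away from $m_\Psi(\cyl_n^j)$.

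The key steps, in order, are: (1) Pass to a high enough iterate $F=\tilde F^k$ so that the distortion bound for the induced potential $\Psi$ (it is weakly H\"older by Remark~\ref{rmk:weak Holder}, hence has summable variations) gives us a Gibbs measure $m_\Psi$ for $(X,F,\Psi)$ with a good distortion constant $C$, and so that the "$\gamma_0$-slack" we can afford dominates the distortion; the choice of $k$ is exactly what lets us absorb the multiplicative error $C$ into the factor $1-\gamma_0$. (2) For $\mu_F\in\M_F$, decompose $h(\mu_F)+\int\Psi~d\mu_F$ over the generating partition $\P_n^F$: using that $\P_1^F$ is generating and $\Psi$ has summable variations, one gets $h(\mu_F)+\int\Psi~d\mu_F \le \sum_{j} -\mu_F(\cyl_n^j)\log\frac{\mu_F(\cyl_n^j)}{m_\Psi(\cyl_n^j)} + (\text{error}\le 0 \text{ from } P^G(\Psi)=0)$, i.e. the free energy is bounded above by minus the relative entropy $H(\mu_F \mid m_\Psi)$ restricted to the $n$th partition, up to the distortion constant. (3) Bound that relative-entropy sum: since $\sum_j \mu_F(\cyl_n^j)=\sum_j m_\Psi(\cyl_n^j)=1$ (modulo the distortion constant), and $x\mapsto x\log(x/y)$ is strictly convex, the presence of a single cylinder with $\mu_F(\cyl_n^j)/m_\Psi(\cyl_n^j)\notin[1-\gamma_0, (1-\gamma_0)^{-1}]$ forces $H(\mu_F\mid m_\Psi)$ to be at least some explicit $\varepsilon(\gamma_0, m_\Psi(\cyl_n^j))>0$; this is an elementary but slightly delicate computation with the function $\phi(x)=x\log x - x + 1$, noting $H(\mu_F\mid m_\Psi)\ge \sum_j m_\Psi(\cyl_n^j)\,\phi\big(\mu_F(\cyl_n^j)/m_\Psi(\cyl_n^j)\big)$ and $\phi\ge 0$, so one term alone already gives a positive lower bound. (4) Set $\delta_n^j := -\varepsilon(\gamma_0, m_\Psi(\cyl_n^j)) + (\text{distortion correction}) < 0$, which is what is claimed.

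The main obstacle I expect is Step (2): making rigorous the inequality "free energy $\le$ $-$(relative entropy on $\P_n^F$) $+$ small error" for a \emph{countable} partition with an \emph{unbounded} potential $\Psi$ (recall $\Psi$ can be very negative on deep cylinders). One has to be careful that the entropy $h(\mu_F;\P_n^F)$ genuinely dominates $h(\mu_F;\P_1^F)=h(\mu_F)$ only up to the summable-variation tail, and that the integral $\int\Psi~d\mu_F$ is controlled cylinder-by-cylinder using the Gibbs inequality $C^{-1}\le m_\Psi(\cyl_n^j)/e^{-nP + S_n\Psi} \le C$ with $P=P^G(\Psi)=0$; measures with $\int\Psi~d\mu_F=-\infty$ are trivially fine (free energy $=-\infty\le\delta_n^j$), so one only needs to handle $\mu_F\in\M_F(-\Psi)$. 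The iterate $F=\tilde F^k$ is taken precisely so that the summable-variation tail past the first symbol is smaller than the gap produced in Step (3), which closes the argument. A secondary point is checking that $m_\Psi$ (conformal for $(X,F,\Psi)$) is the same object, up to bounded distortion, as the Gibbs measure $\mu_{\Psi}$ from Section~\ref{sec:zero press}; this is standard for full-shifts with summable-variation potentials of zero Gurevich pressure.
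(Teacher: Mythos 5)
Your strategy is genuinely different from the paper's, and unfortunately it has a quantitative gap that I do not see how to close.

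The paper proves Proposition~\ref{prop:conv to Gibbs} by a tilted-potential argument: for the cylinder $\cyl_n^i$ it introduces $\Psi^\flat = \Psi + \log(1-\gamma)\mathbf{1}_{\cyl_n^i}$, computes $P^G(\Psi^\flat) = \log(1-\gamma\, m_\Psi(\cyl_n^i))$ exactly (Lemma~\ref{lem:pres psi flat}), and then compares the $\Psi$- and $\Psi^\flat$-free energies of any measure that gives $\cyl_n^i$ small mass. The whole point of this is that \emph{both} the pressure drop $\log(1-\gamma m_\Psi(\cyl_n^i)) \approx -\gamma m_\Psi(\cyl_n^i)$ \emph{and} the correction term are of order $m_\Psi(\cyl_n^i)$, so after tuning $\gamma$ one gets a strictly negative $\delta_n^i$ commensurate with $m_\Psi(\cyl_n^i)$. (This is consistent with the paper's remark that $\delta_n^j\to 0$ as $m_\Psi(\cyl_n^j)\to 0$.)

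Your relative-entropy route produces a bound of the wrong shape. Your Step~(2) as written is already off: since $\P_1^F$ generates, the correct inequality is $h(\mu_F) \le \tfrac1n H(\mu_F;\P_n^F)$, so with the Gibbs property $m_\Psi(\cyl_n^j)=K^{\pm 1}e^{S_n\Psi(x)}$ (at $P^G(\Psi)=0$) one gets
\[
h(\mu_F)+\int\Psi\,d\mu_F \;\le\; -\frac1n\,H\bigl(\mu_F\mid m_\Psi;\P_n^F\bigr) + \frac{\log K}{n},
\]
with a \emph{positive} error $(\log K)/n$, not a negative one, and with the crucial $1/n$ normalisation that you dropped. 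Combining with your Step~(3), a single cylinder where $\mu_F$ is $\gamma_0$-pinched away from $m_\Psi$ contributes at most
\[
H\bigl(\mu_F\mid m_\Psi;\P_n^F\bigr)\;\ge\; m_\Psi(\cyl_n^j)\,\phi\!\left(\frac{\mu_F(\cyl_n^j)}{m_\Psi(\cyl_n^j)}\right)\;\ge\; c(\gamma_0)\, m_\Psi(\cyl_n^j),
\]
where $\phi(x)=x\log x-x+1$ and $c(\gamma_0)$ is bounded (indeed $c(\gamma_0)\le 1$ for the lower deviation). Hence your bound is $\bigl(\log K - c(\gamma_0)m_\Psi(\cyl_n^j)\bigr)/n$. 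By Lemma~\ref{lem:shrinking cylinders}, $m_\Psi(\cyl_n^j)\le e^{-\lambda n}$, so for every fixed $K>1$ this numerator is \emph{positive} for all but finitely many $n$. Passing to $F=\tilde F^k$ shrinks $\log K$ by making the variation tail smaller, but $\log K$ stays a fixed positive number while $m_\Psi(\cyl_n^j)$ still decays exponentially in $n$; so no single choice of $k$ makes the inequality negative for all $n$, which is what the proposition demands. The underlying obstruction is that a deviation on one cylinder of $\P_n^F$ contributes only $O(m_\Psi(\cyl_n^j))$ to $H(\mu_F\mid m_\Psi;\P_n^F)$, yet $H(\mu_F\mid m_\Psi;\P_n^F)$ must grow \emph{linearly} in $n$ to force a negative free energy; the paper's tilted-potential comparison sidesteps this entirely because it never introduces an $n$-independent distortion error on the pressure side.

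In short: the relative-entropy idea is natural and does handle the $\flat$ and $\sharp$ cases simultaneously, but the bound it yields is too weak by a factor of roughly $m_\Psi(\cyl_n^j)$, and the freedom to take $k$ large cannot repair this because the needed smallness of $\log K$ depends on $n$. You would need a mechanism that makes the quantitative loss scale with $m_\Psi(\cyl_n^j)$ rather than being an $n$-independent additive $\log K$; that is exactly what the $\Psi^\flat/\Psi^\sharp$ construction in the paper provides.
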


Note that $\delta_{n}^j \to 0$ as $m_\Psi(\cyl_n^j) \to 0$.  Also note that %\textbf{GI: slightly changed since K was not defined yet}
if  $K=\exp\left(\sum_{k=1}^\infty V_k(\tilde\Psi)\right)$ is a distortion constant for the potential $\tilde\Psi$ for the inducing scheme $(X, \tilde F)$ then it is also a distortion constant the potential $\Psi$ on $(X,F)$.

The following lemma will allow us to choose $k$ in the proof of Proposition~\ref{prop:conv to Gibbs}.  It is true for $\Psi=\Psi_t$, but also for more general potentials of summable variation.

%\textbf{GI: I think the way the next Lemma is stated is confusing. First $K$ is defined to be the exponential of the variations and then we say for a given $K>..$. Also in the last sentence an $\epsilon$ appears that has not been defined anywhere, it has to do with the size of the partition but it is not clear either. In the proof we don't use the fact that $\sup_{\cyl_1\in \P_1} m_\Psi(\cyl_1)$ is small enough to obtain any of the bounds...maybe I am missing something, but what is proven there is: Suppose that we have an inducing scheme $(X,F)$ with distortion constant $K=\exp\left(\sum_{k=1}^\infty V_k(\Psi)\right)$ and $P^G(\Psi)=0$.  We let $m_\Psi$ denote the conformal measure for the system $(X,F,\Psi)$, then there is $\lambda=\lambda(K,\sup_{\cyl_1\in \P_1^F} m_\Psi(\cyl_1))>0$ so that for $\cyl_n\in \P_n^F$, $$m_\Psi(\cyl_n)\le e^{-\lambda n}.$$
%Certainly, if $k$ is choose to be  very large then  $\sup_{\cyl_1\in \P_1} m_\Psi(\cyl_1)$ is as small as we want and then $\lambda \to \infty$. MT: I fixed this by making $\lambda$ explicit.  It is very important for later that $\lambda>0$.}

%\textbf{MT: we need $K\sup_{\cyl_1\in \P_1^F}m_\Psi(\cyl_1)<1$ to get $\lambda>0$.}

\begin{lema}
Suppose that we have an inducing scheme $(X,F)$ and potential $\Psi=\Psi_t$ with distortion constant $K=\exp\left(\sum_{k=1}^\infty V_k(\Psi)\right)$ and $P^G(\Psi)=0$.  We let $m_\Psi$ denote the conformal measure for the system $(X,F,\Psi)$.  Then for any $\cyl_n\in \P_n^F$ and $n\in \N$, $$m_\Psi(\cyl_n)\le e^{-\lambda n}$$ where $\lambda:=-\log\left(K\sup_{\cyl_1\in \P_1^F} m_\Psi(\cyl_1)\right)$.
%Here for each $K\ge 1$, $\lambda(K,\epsilon) \to \infty$ as $\epsilon \to 0$.
\label{lem:shrinking cylinders}
\end{lema}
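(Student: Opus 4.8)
The plan is to exploit the conformality of $m_\Psi$ together with the assumption $P^G(\Psi)=0$ to transfer mass estimates from $n$-cylinders down to $1$-cylinders, at the cost of the distortion constant $K$ at each step. Recall that conformality of $m_\Psi$ for $(X,F,\Psi)$ with $P^G(\Psi)=0$ means precisely that for each branch domain $\cyl_1^{i}\in\P_1^F$, if $F|_{\cyl_1^i}$ is the inverse branch mapping $X$ onto $\cyl_1^i$, then $m_\Psi$ satisfies the change of variables $m_\Psi(F(A)) = \int_A e^{-\Psi}\,dm_\Psi$ for measurable $A\subset \cyl_1^i$; equivalently, $m_\Psi(A) = \int_{F(A)} e^{\Psi\circ (F|_{\cyl_1^i})^{-1}}\,dm_\Psi$. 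The point is that since $F$ maps each $\cyl_1^i$ diffeomorphically onto all of $X$, any $n$-cylinder $\cyl_n$ can be written as an inverse branch image of an $(n-1)$-cylinder, and iterating gives a clean recursion.

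First I would fix $\cyl_n = \cyl_n^j \in \P_n^F$ and write $\cyl_n = (F|_{\cyl_1^{i_0}})^{-1}(\cyl_{n-1})$ for the appropriate first symbol $i_0$ and appropriate $(n-1)$-cylinder $\cyl_{n-1}=F(\cyl_n)\in\P_{n-1}^F$. By the conformality relation,
\[
m_\Psi(\cyl_n) = \int_{\cyl_{n-1}} \exp\bigl(\Psi\circ(F|_{\cyl_1^{i_0}})^{-1}\bigr)\,dm_\Psi.
\]
Now I bound the integrand: on the cylinder $\cyl_1^{i_0}$ the oscillation of $\Psi$ is controlled by $V_1(\Psi)+V_2(\Psi)+\cdots$, so $\sup_{\cyl_1^{i_0}}\exp(\Psi)\le K\cdot\inf_{\cyl_1^{i_0}}\exp(\Psi)$, and since $P^G(\Psi)=0$, the sum over branches $\sum_i \sup_{\cyl_1^i}\exp(\Psi)$ is comparable to $1$ — more precisely, conformality with zero pressure applied to $A=X$ gives $\sum_i m_\Psi(\cyl_1^i) = m_\Psi(X)$ while each $m_\Psi(\cyl_1^i)=\int_X \exp(\Psi\circ(F|_{\cyl_1^i})^{-1})\,dm_\Psi \ge \frac1K \bigl(\sup_{\cyl_1^i}\exp\Psi\bigr) m_\Psi(X)$. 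Hence $\sup_i \sup_{\cyl_1^i}\exp(\Psi) \le K \sup_i m_\Psi(\cyl_1^i)$ (normalising $m_\Psi(X)=1$). Plugging this bound into the integral above yields
\[
m_\Psi(\cyl_n) \le \Bigl(K\sup_{\cyl_1\in\P_1^F} m_\Psi(\cyl_1)\Bigr)\, m_\Psi(\cyl_{n-1}) = e^{-\lambda}\, m_\Psi(\cyl_{n-1}).
\]
Iterating this $n$ times down to $m_\Psi(\cyl_0)=m_\Psi(X)=1$ gives $m_\Psi(\cyl_n)\le e^{-\lambda n}$, which is exactly the claim. (One should check $\lambda>0$, i.e. $K\sup_{\cyl_1}m_\Psi(\cyl_1)<1$; if this fails the bound $e^{-\lambda n}$ is vacuous for large $n$ but the statement is still literally true, so no harm — though in the intended application one expects the branch partition to be genuinely infinite so that $\sup m_\Psi(\cyl_1)$ is small, perhaps after the passage to $F=\tilde F^k$ of the preceding proposition.)

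The step I expect to be the main (minor) obstacle is making the conformality bookkeeping precise: being careful that $F$ really maps each $\cyl_1^i$ onto the full $X$ (this is part of the definition of an inducing scheme in Section~\ref{sec:ind schemes}), that the distortion constant $K=\exp(\sum_{k\ge1}V_k(\Psi))$ indeed controls the ratio of $\sup$ to $\inf$ of $\exp(\Psi)$ on a single branch (this is just the definition of $V_n$ summed), and that $P^G(\Psi)=0$ is being used correctly — namely that the conformal measure exists with the eigenvalue $e^{P^G(\Psi)}=1$, so no extra exponential factor appears in the recursion. None of these is deep; the content of the lemma is the one-line geometric recursion above, and everything else is checking that the definitions line up.
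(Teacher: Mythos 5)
Your proof is correct and follows essentially the same route as the paper's: the paper applies conformality to $F^n$ in one shot and uses the intermediate value theorem to produce a point $x\in\cyl_n^i$ with $m_\Psi(\cyl_n^i)=e^{S_n\Psi(x)}\le e^{n\sup\Psi}$, whereas you reach the identical bound $m_\Psi(\cyl_n)\le (\sup_X e^\Psi)^n$ by a one-step conformality recursion; both arguments then conclude via the same Gibbs-type estimate $e^{\sup\Psi}\le K\sup_{\cyl_1}m_\Psi(\cyl_1)$. (One small tightening: the oscillation of $\Psi$ on a single $1$-cylinder is already bounded by $V_1(\Psi)$ alone; the full sum $\sum_{k\ge 1}V_k(\Psi)$ is only needed when controlling Birkhoff sums on deeper cylinders, but your use of the larger constant $K$ is of course still valid.)
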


\begin{proof}
Since $m_\Psi$ is a conformal measure, for $\cyl_n^i\in \P_n^F$ we have
$$ 1=m_\Psi(F^n(\cyl_n^i))=\int_{\cyl_n^i}e^{-S_n\Psi}~dm_\Psi.$$ So by the Intermediate Value Theorem we can choose $x\in \cyl_n^i$  so that $e^{S_n\Psi(x)}=m_\Psi(\cyl_n^i)$.  For future use we will write $S_n^i\Psi:=S_n\Psi(x)$.  Therefore, $$m_\Psi(\cyl_n^i)=e^{S_n^i\Psi}\le e^{n\sup\Psi}.$$
By the Gibbs property, $$e^{\sup\Psi} \le K \sup_{\cyl_1\in \P_1^F}m_\Psi(\cyl_1).$$   Therefore
$$\sup\Psi \le  \log\left(K\sup_{\cyl_1\in \P_1^F}m_\Psi(\cyl_1)\right).$$ We can choose this as our value for $-\lambda$.
\end{proof}

%\textbf{MT: OK place to put this notation? GI yes, the other possibility was when defining Gibbs measure, but it is better here.}

In the following proof we use the notation $A=\theta^{\pm C}$ to mean $\theta^{-C} \le A \le \theta^C$.

\begin{proof}[Proof of Proposition~\ref{prop:conv to Gibbs}]
Suppose that the distortion of the potential $\tilde\Psi$ for the scheme $(X,\tilde F)$ is bounded by $K\ge 1$.
We first prove that measures giving cylinders very small mass compared to $m_\Psi$ must have low free energy.  Note that for any $k\in \N$, the potential $\Psi$ for the scheme $(X,F)$ where $F=\tilde F^k$ also has distortion bounded by $K$. We will choose $k$ later so that $\lambda=\lambda(K,\sup_im_\Psi(X_i))$ for $(X,F)$ as defined in Lemma~\ref{lem:shrinking cylinders}, is large enough to satisfy the conditions associated to \eqref{eq:flat estimate}, \eqref{eq:sharp estimate} and \eqref{eq:gamma sharp}.  %\textbf{MT: new sentence - better ref? GI: this one is OK}
Note that as in \cite[Lemma 3]{Sarphase} we also have $P^G(\Psi)=0$.
% \textbf{GI: maybe one should say that $k$ will be choose later and relate it to $p$ so that there is no confusion.  MT: I changed $p$ to $k$, and did what you say.}

In Lemma~\ref{lem:flat low FE} below, we will use the Variational Principle to bound the free energy of measures for the scheme which, for some $\gamma$, have $\mu(\cyl_n^i)\le K m_\Psi(\cyl_n^i)(1-\gamma)/(1-m_\Psi(\cyl_n^i))^n$ in terms of the Gurevich pressure. % (later we will be able to remove the factors $K/(1-m_\Psi(\cyl_n^i))^n$ from this computation).
However, instead of using $\Psi$, which,  in the computation of Gurevich pressure weights points $x\in \cyl_n^i$ by $e^{\Psi(x)}$, we use a potential which weights points in $\cyl_n^i$ by $(1-\gamma) e^\Psi(x)$.  That is, we consider $(X,F, \Psi^\flat)$ where
\begin{equation*}
\Psi^\flat(x)= \begin{cases} \Psi(x)+\log(1-\gamma)& \text{if } x\in \cyl_n^i,\\
\Psi(x) &  \text{if } x\in \cyl_n^j,\text{ for } j\neq i.
\end{cases}
\end{equation*}

Firstly we will compute $P^G(\Psi^\flat)$.

\begin{lema}
$P^G(\Psi^\flat)= \log\left(1-\gamma m_\Psi(\cyl_n^i)\right).$ \label{lem:pres psi flat}
\end{lema}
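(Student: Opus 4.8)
The plan is to compute the Gurevich pressure $P^G(\Psi^\flat)$ directly from its definition via the partition functions $Z_N(\Psi^\flat, \cyl_{i_0})$, exploiting the fact that $\Psi^\flat$ differs from $\Psi$ only on the single $n$-cylinder $\cyl_n^i$, where it is shifted by the additive constant $\log(1-\gamma)$. First I would fix a base symbol $i_0\in\P_1^F$ and expand
\[
Z_N(\Psi^\flat,\cyl_{i_0})=\sum_{x:\,F^Nx=x}\exp\bigl(S_N\Psi^\flat(x)\bigr)\chi_{\cyl_{i_0}}(x).
\]
For a periodic point $x$ of period $N$, the difference $S_N\Psi^\flat(x)-S_N\Psi(x)$ equals $\log(1-\gamma)$ times the number of indices $0\le k<N$ with $F^k(x)\in \cyl_n^i$; equivalently, it equals $\log(1-\gamma)$ times the number of times the coding sequence of $x$ displays the length-$n$ word $w$ corresponding to $\cyl_n^i$ within one period. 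Grouping periodic orbits by this count $j\ge 0$ gives
\[
Z_N(\Psi^\flat,\cyl_{i_0})=\sum_{j\ge 0}(1-\gamma)^j\, Z_N^{(j)},
\]
where $Z_N^{(j)}$ is the $\Psi$-weighted sum over period-$N$ points in $\cyl_{i_0}$ that see the word $w$ exactly $j$ times. Since $(1-\gamma)^j\le 1$, one immediately gets $Z_N(\Psi^\flat,\cyl_{i_0})\le Z_N(\Psi,\cyl_{i_0})$, hence $P^G(\Psi^\flat)\le P^G(\Psi)=0$; but I expect the sharp constant $\log(1-\gamma m_\Psi(\cyl_n^i))$ to require a genuine computation, not just this crude bound.

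The cleanest route to the exact value is to package the weighted count into a transfer-operator / generating-function identity. Introduce a bookkeeping variable: for $s\in[0,1]$ consider the potential $\Psi_s$ that adds $\log s$ on $\cyl_n^i$ and is $\Psi$ elsewhere (so $\Psi^\flat=\Psi_{1-\gamma}$, $\Psi=\Psi_1$), and let $\lambda(s):=e^{P^G(\Psi_s)}$. The conformal/Gibbs structure from Section~\ref{sec:CMS}, together with the fact (Lemma~\ref{lem:shrinking cylinders}) that $m_\Psi$ is a conformal measure with $P^G(\Psi)=0$, lets me realise $\lambda(s)$ as the leading eigenvalue of the Ruelle operator $\L_{\Psi_s}$ acting on a suitable space; and because $\Psi_s-\Psi$ is supported on one cylinder, $\L_{\Psi_s}=\L_\Psi - (1-s)\,\L_\Psi(\chi_{\cyl_n^i}\,\cdot\,)$ is a rank-controlled perturbation of $\L_\Psi$. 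Using the conformal measure $m_\Psi$ as a reference, $\L_\Psi^* m_\Psi = m_\Psi$, and the perturbation acts, at the level of the eigenvalue equation, simply by subtracting a $(1-s)$ multiple of the mass that the fixed point of $\L_\Psi$ assigns to $\cyl_n^i$. Chasing this through — i.e. writing the eigenfunction equation $\L_{\Psi_s}h_s=\lambda(s)h_s$, integrating against $m_\Psi$, and using $\int \L_\Psi(\chi_{\cyl_n^i}h)\,dm_\Psi=\int_{\cyl_n^i}h\,dm_\Psi$ — yields $\lambda(s)=1-(1-s)m_\Psi(\cyl_n^i)$, which with $s=1-\gamma$ gives exactly $P^G(\Psi^\flat)=\log(1-\gamma m_\Psi(\cyl_n^i))$.

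An alternative, more elementary and probably safer, route that avoids spectral theory of the transfer operator: use the Approximation Property (Proposition~\ref{prop:approx}) to reduce to finite subsystems $(X^M,F_M)$, where $P^G$ is the classical pressure and the transfer operator is a genuine nonnegative matrix, so that the Perron–Frobenius eigenvalue computation above is entirely rigorous; then pass to the limit $M\to\infty$, checking (via the Gibbs/conformal bounds) that $m_\Psi(\cyl_n^i)$ is approximated by its finite-level analogues and that the eigenvalue identity $\lambda_M(s)=1-(1-s)\,m_{\Psi,M}(\cyl_n^i)+o(1)$ survives. The main obstacle I anticipate is precisely this bookkeeping in the countable (non-compact) setting: making sure the single-cylinder perturbation of the transfer operator is controlled uniformly enough that the finite-level eigenvalue identities converge to the claimed closed form, and that no mass escapes to infinity. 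Once the eigenvalue identity is established at finite level and shown to be stable under the approximation, the lemma follows immediately by setting $s=1-\gamma$ and taking logarithms.
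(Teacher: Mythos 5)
The paper's proof is a hands-on estimate of the partition functions $Z_j(\Psi^\flat, \cyl_1^i)$: it uses the Intermediate Value Theorem to pick, in each $1$-cylinder $\cyl_1^k$, a representative point $x_{\cyl_1^k}$ with $e^{\Psi(x_{\cyl_1^k})}=m_\Psi(\cyl_1^k)$, so that $\sum_k e^{\Psi_k}=1$ and hence $\sum_k e^{\Psi_k^\flat}=1-\gamma e^{\Psi_i}=1-\gamma m_\Psi(\cyl_1^i)$; it then runs a cylinder-depth recursion $Z_{j+1}\asymp(1-\gamma e^{\Psi_i})\,Z_j$ whose accumulated distortion error is summable because $\Psi$ is weakly H\"older, and reads off the pressure as the growth rate. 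Your main route is genuinely different — it tries to recover the leading eigenvalue of $\L_{\Psi_s}$ from the conformal measure $m_\Psi$ via the perturbation $\L_{\Psi_s}=\L_\Psi-(1-s)\L_\Psi(\chi_{\cyl_n^i}\cdot)$ — and it is there that there is a genuine gap.

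Concretely, the integration step you describe does not yield the claimed formula. Writing $\L_{\Psi_s}h_s=\lambda(s)h_s$ with $h_s\ge0$ normalised so that $\int h_s\,dm_\Psi=1$, and using $\L_\Psi^*m_\Psi=m_\Psi$ (so that $\int \L_\Psi(\chi_{\cyl_n^i}h_s)\,dm_\Psi=\int_{\cyl_n^i}h_s\,dm_\Psi$), one obtains
\begin{equation*}
\lambda(s)\;=\;1-(1-s)\int_{\cyl_n^i}h_s\,dm_\Psi ,
\end{equation*}
not $\lambda(s)=1-(1-s)\,m_\Psi(\cyl_n^i)$. To pass from the first to the second you would need $\int_{\cyl_n^i}h_s\,dm_\Psi=m_\Psi(\cyl_n^i)$, i.e.\ that the eigenfunction $h_s$ has $m_\Psi$-average $1$ over the particular cylinder $\cyl_n^i$; that is not automatic (it would hold if $h_s\equiv1$, but in general $h_s$ is the density of the Gibbs measure with respect to the conformal measure and is not constant). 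Indeed at $s=1$ your equation gives $\lambda'(1)=\mu_\Psi(\cyl_n^i)$, consistent with the envelope-theorem computation $\frac{d}{ds}P^G(\Psi_s)\big|_{s=1}=\mu_\Psi(\cyl_n^i)$, whereas the target formula would require $\lambda'(1)=m_\Psi(\cyl_n^i)$. So the step ``chasing this through\dots yields $\lambda(s)=1-(1-s)m_\Psi(\cyl_n^i)$'' is exactly where the work has to happen, and it is left unjustified. Your finite-subsystem alternative has the same hole: the Perron eigenvector at each finite level is not constant either, so the finite-level eigenvalue identity you would need to propagate is $\lambda_M(s)=1-(1-s)\int_{\cyl}h_{s,M}\,dm_{\Psi,M}$, with the same unresolved factor. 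The paper sidesteps this by never introducing an eigenfunction: the IVT gives points where $e^{\Psi}$ equals the conformal mass of the $1$-cylinder \emph{exactly}, and the whole computation is then run at the level of $Z_j$ with only summable distortion corrections. If you want to keep the operator-theoretic framing you would need to supply an argument (not present in your sketch) for why the $m_\Psi$-mass of $\cyl_n^i$ under $h_s$ equals $m_\Psi(\cyl_n^i)$; absent that, you should fall back on a direct partition-function recursion as in the paper.
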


\begin{proof}
We prove the lemma assuming that $n=1$ since the general case follows similarly. We will estimate $Z_j(\Psi^\flat, \cyl_1^i)$, where $Z_j$ is defined in \eqref{eq:Zn}.  The ideas we use are similar to those in the proof of Claim 2 in the proof of \cite[Proposition 2]{BTeqnat}.  As can be seen from the definition,
$$Z_j(\Psi^\flat, \cyl_1^i) = e^{\pm \sum_{k=0}^{j-1}V_k(\Psi)} \sum_{\cyl_j\in \P_j^F\cap \cyl_1^i} \ \sum_{\text{any } x\in \cyl_j} e^{S_j\Psi^\flat(x)}.$$

 As in the proof of Lemma~\ref{lem:shrinking cylinders}, the conformality of $m_\Psi$ and the Intermediate Value Theorem imply that for each $k$ there exists $x_{\cyl_1^k}\in \cyl_1^k$ such that $m_\Psi(\cyl_1^k) = e^{\Psi(x_{\cyl_1^k})}$.  For the duration of this proof we write $\Psi_k := \Psi(x_{\cyl_1^k})$.  As above, we have $e^{\Psi_i^\flat}:=(1-\gamma) e^{\Psi_i}$.  Therefore, $$\sum_i e^{\Psi_i^\flat}= 1-\gamma e^{\Psi_i}.$$

For each $\cyl_j\in\P_j^F$ and for any $k\in \N$, there exists a unique $\cyl_{j+1}\subset \cyl_j$ such that $F^j(\cyl_{j+1})=\cyl_1^k$.  Moreover, there exists $x_{\cyl_{j+1}}\in \cyl_{j+1}$ such that $F^j(x_{\cyl_{j+1}}) = x_{\cyl_1^k}$.  Then for $\cyl_j\subset \cyl_1^i$,
\begin{align*}
\sum_{\cyl_{j+1}\subset\cyl_j} e^{S_{j+1}\Psi^\flat(x_{\cyl_{j+1}})} &= e^{\pm V_{j+1}(\Psi)} e^{S_j\Psi^\flat(x_{\cyl_{j}})} \left(\sum_i e^{\Psi_i^\flat}\right)\\
&= e^{\pm V_{j+1}(\Psi)} e^{S_j\Psi^\flat(x_{\cyl_{j}})} (1-\gamma e^{\Psi_i}).
\end{align*}

Therefore,
$$Z_{j+1}(\Psi^\flat, \cyl_1^i) = (1-\gamma e^{\Psi_i}) e^{\pm\left(V_{j+1}(\Psi)+\sum_{k=0}^{j-1}V_k(\Psi)\right)} Z_j(\Psi^\flat, \cyl_1^i),$$
hence
$$Z_{j+1}(\Psi^\flat, \cyl_1^i) = (1-\gamma e^{\Psi_i})^j e^{\pm\sum_{k=0}^{j} (k+1)V_{k}(\Psi)}.$$
As in Remark~\ref{rmk:weak Holder}, $\Psi$ is weakly H\"older, so $\sum_{k=0}^{j} (k+1)V_{k}(\Psi)< \infty$.   Therefore we have $P^G(\Psi^\flat)= \log(1-\gamma e^{\Psi_i})= \log(1-\gamma m_\Psi(\cyl_1^i))$, proving the lemma.
\end{proof}

For the next step in the proof of the upper bound on the free energy of measures giving $\cyl_n^i$ small mass, we relate properties of $(X,F, \Psi)$ and $(X,F, \Psi^\flat)$.
%\textbf{MT:new}  Later in the proof we will be able to show that instead of considering measures $\mu$ so that on $\cyl_n^i$,
%$$\mu(\cyl_n^i)<\frac{K(1-\gamma)}{(1-m_\Psi(\cyl_n^i))^n} m_\Psi(\cyl_n^i)$$ as in the lemma, we can  change to $\mu(\cyl_n^i)<K(1-\gamma) m_\Psi(\cyl_n^i)$ as in the statement of the proposition.

%\textbf{MT: as you suggest, I changed `the final equality' to `the final inequality', and noted below the lemma that we can prove equality - not that we really care now.}

\begin{lema}
$\M_F(\Psi)=\M_F(\Psi^\flat)$ and  for any $\cyl_n^i\in \P_n^F$ we have
\begin{align*}
&\sup\left\{h_F(\mu) +\int\Psi~d\mu:\mu\in \M_F(\Psi),\ \mu(\cyl_n^i)<\frac{K(1-\gamma)}{(1-m_\Psi(\cyl_n^i))^n} m_\Psi(\cyl_n^i) \right\}\\
&\hspace{5mm} \le \sup\left\{h_F(\mu) +\int\Psi^\flat~d\mu:\mu\in \M_F(\Psi^\flat),\ \mu(\cyl_n^i)<\frac{K(1-\gamma)}{(1-m_\Psi(\cyl_n^i))^n} m_\Psi(\cyl_n^i) \right\}\\
&\hspace{10mm}-\left[\frac{K(1-\gamma)\log(1-\gamma)}{ (1-m_\Psi(\cyl_n^i))^n} \right] m_\Psi(\cyl_n^i) \\
&\hspace{5mm} \le P^G(\Psi^\flat)- \left[\frac{K(1-\gamma)\log(1-\gamma)}{(1-m_\Psi(\cyl_n^i))^n} \right] m_\Psi(\cyl_n^i).
\end{align*}
\label{lem:flat low FE}
\end{lema}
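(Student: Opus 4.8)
The plan is to prove the three displayed relations in turn. The first claim, $\M_F(\Psi) = \M_F(\Psi^\flat)$, is essentially immediate: since $\Psi^\flat$ differs from $\Psi$ only by the bounded quantity $\log(1-\gamma)$ on the cylinder $\cyl_n^i$, we have $|\int\Psi~d\mu - \int\Psi^\flat~d\mu| \le |\log(1-\gamma)|<\infty$ for every $\mu\in\M_F$, so $-\int\Psi~d\mu<\infty$ if and only if $-\int\Psi^\flat~d\mu<\infty$. I would state this first and then move to the inequalities.

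For the middle inequality, I would take any $\mu\in\M_F(\Psi)=\M_F(\Psi^\flat)$ satisfying the constraint $\mu(\cyl_n^i)<\frac{K(1-\gamma)}{(1-m_\Psi(\cyl_n^i))^n}m_\Psi(\cyl_n^i)$ and compute directly
$$h_F(\mu)+\int\Psi~d\mu = h_F(\mu)+\int\Psi^\flat~d\mu - \log(1-\gamma)\,\mu(\cyl_n^i),$$
using that $\Psi - \Psi^\flat = -\log(1-\gamma)\chi_{\cyl_n^i}$ and hence $\int(\Psi-\Psi^\flat)~d\mu = -\log(1-\gamma)\mu(\cyl_n^i)$. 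Since $\gamma\in(0,1)$ we have $\log(1-\gamma)<0$, so $-\log(1-\gamma)\mu(\cyl_n^i)>0$ and it is maximised, under the constraint, by taking $\mu(\cyl_n^i)$ as large as allowed, i.e. bounded above by $-\log(1-\gamma)\cdot\frac{K(1-\gamma)}{(1-m_\Psi(\cyl_n^i))^n}m_\Psi(\cyl_n^i)$. Taking the supremum over all such $\mu$ on both sides yields exactly the middle inequality (the bracketed term there is $-\frac{K(1-\gamma)\log(1-\gamma)}{(1-m_\Psi(\cyl_n^i))^n}m_\Psi(\cyl_n^i)$, which is positive, matching the sign of the subtraction). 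The last inequality is then just the Variational Principle, Proposition~\ref{prop:VarPri}, applied to $(X,F,\Psi^\flat)$: the supremum of $h_F(\mu)+\int\Psi^\flat~d\mu$ over all of $\M_F(\Psi^\flat)$ — and a fortiori over the constrained subset — is at most $P^G(\Psi^\flat)$, which is finite and equal to $\log(1-\gamma m_\Psi(\cyl_n^i))$ by Lemma~\ref{lem:pres psi flat}. One should check that $P^G(\Psi^\flat)<\infty$, which follows from $\Psi^\flat\le\Psi+ 0$ (since $\log(1-\gamma)<0$), hence $P^G(\Psi^\flat)\le P^G(\Psi)=0$; this also lets us invoke the Variational Principle in the form stated.

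The only mild subtlety — and the place I would be most careful — is the bookkeeping of the constraint when passing to the supremum: one must make sure the constrained supremum on the left is genuinely bounded by the constrained supremum on the right plus the extra term, rather than needing the unconstrained supremum. This is fine because the two constrained sets are the same set of measures (the constraint only involves $\mu(\cyl_n^i)$, not $\Psi$ versus $\Psi^\flat$), so the pointwise identity $h_F(\mu)+\int\Psi~d\mu = \left(h_F(\mu)+\int\Psi^\flat~d\mu\right) - \log(1-\gamma)\mu(\cyl_n^i)$ together with the uniform bound $-\log(1-\gamma)\mu(\cyl_n^i)\le -\log(1-\gamma)\cdot\frac{K(1-\gamma)}{(1-m_\Psi(\cyl_n^i))^n}m_\Psi(\cyl_n^i)$ on the constrained set transfers directly to the suprema. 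No inducing-scheme machinery beyond what is already set up is needed here; the lemma is a soft manipulation of the Variational Principle, and the real work has been front-loaded into Lemma~\ref{lem:pres psi flat}.
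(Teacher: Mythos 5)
Your proposal is correct and follows essentially the same route as the paper: you prove $\M_F(\Psi)=\M_F(\Psi^\flat)$ from the fact that the potentials differ by a bounded amount, derive the middle inequality from the pointwise identity $\int(\Psi-\Psi^\flat)\,d\mu = -\log(1-\gamma)\mu(\cyl_n^i)$ combined with the constraint on $\mu(\cyl_n^i)$, and close with the Variational Principle (the paper phrases this as "the definition of pressure"). Your additional check that $P^G(\Psi^\flat)\le P^G(\Psi)=0<\infty$ before invoking Proposition~\ref{prop:VarPri} is a small but worthwhile extra precaution not made explicit in the paper.
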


Note that we can prove that the final inequality is actually an equality, but since we don't require this here we will not prove it.

\begin{proof}
The fact that $\M_F(\Psi)=\M_F(\Psi^\flat)$ is clear from the definition.

Suppose that $\mu\in \M_F(\Psi)$ and $\mu(\cyl_n^i)\le m_\Psi(\cyl_n^i) K(1-\gamma)/(1-m_\Psi(\cyl_n^i))^n$.% for all $x\in \cyl_n^i$.
Then
\begin{align*}
\left(h_F(\mu)+\int\Psi~d\mu\right)- \left(h_F(\mu)+\int\Psi^\flat~d\mu\right)= \int\Psi-\Psi^\flat~d\mu\\
=\mu(\cyl_n^i)(-\log(1-\gamma))\le -\left[\frac{K(1-\gamma)\log(1-\gamma)}{(1-m_\Psi(\cyl_n^i))^n}\right] m_\Psi(\cyl_n^i),
\end{align*}
proving the first inequality in the Lemma.  The final inequality follows from the definition of pressure.
\end{proof}

Lemmas~\ref{lem:pres psi flat} and \ref{lem:flat low FE} imply that any measure $\mu_F$ with $\mu_F(\cyl_n^i)<K(1-\gamma)m_\Psi(\cyl_n^i)/(1-m_\Psi(\cyl_n^i))^n$ must have
\iffalse
\begin{align}h(\mu_F)+\int\Psi~d\mu_F & \le P^G(\Psi^\flat)-\left[\frac{K(1-\gamma) \log(1-\gamma)}{(1-m_\Psi(\cyl_n^i))^n}\right] m_\Psi(\cyl_n^i) \nonumber\\
&\le \log\left(1-\gamma m_\Psi(\cyl_n^i)\right) -\left[\frac{K(1-\gamma) \log(1-\gamma)}{(1-m_\Psi(\cyl_n^i))^n}\right]m_\Psi(\cyl_n^i)
%\label{eq:flat estimate}
\end{align}

\fi
\begin{align}
h(\mu_F)+\int\Psi~d\mu_F & \le P^G(\Psi^\flat)-\left[\frac{K(1-\gamma) \log(1-\gamma)}{(1-m_\Psi(\cyl_n^i))^n}\right] m_\Psi(\cyl_n^i) \label{eq:inter flat estimate}\\
&\le \log\left(1-\gamma m_\Psi(\cyl_n^i)\right) -\left[\frac{K(1-\gamma)\log(1-\gamma)}{(1-m_\Psi(\cyl_n^i))^n} \right]m_\Psi(\cyl_n^i).
\label{eq:flat estimate}
\end{align}
\iffalse
\begin{equation}
\label{eq:flat estimate}
\begin{split}
h(\mu_F)+\int\Psi~d\mu_F & \le P^G(\Psi^\flat)-\left[\frac{K(1-\gamma) \log(1-\gamma)}{(1-m_\Psi(\cyl_n^i))^n}\right] m_\Psi(\cyl_n^i) \nonumber\\
&\le \log\left(1-\gamma m_\Psi(\cyl_n^i)\right) -\left[\frac{K(1-\gamma)\log(1-\gamma)}{(1-m_\Psi(\cyl_n^i))^n} \right]m_\Psi(\cyl_n^i)
\end{split}
\end{equation}
\fi

If $m_\Psi(\cyl_n^i)$ is very small then $\log\left(1-\gamma m_\Psi(\cyl_n^i)\right) \approx -\gamma m_\Psi(\cyl_n^i)$ and so  choosing $\gamma\in (0,1)$ close enough to 1 the above is strictly negative. By Lemma~\ref{lem:shrinking cylinders}, $m_\Psi(\cyl_n^i)<e^{-\lambda n}$ so $\cyl_n^i$ is small if $\lambda$ large.  Hence if $\lambda$ is sufficiently large then we can set  $\gamma=\tilde\gamma^{\flat}\in (0,1)$ so that
$$\log\left(1-\tilde\gamma^\flat e^{-\lambda n}\right)-\left[\frac{K(1-\tilde\gamma^\flat)\log(1-
\tilde\gamma^\flat)}{(1- e^{-\lambda n})^n}\right]e^{-\lambda n}$$
is strictly negative for all $n\in \N$.  This implies that \eqref{eq:flat estimate} with $\gamma=\tilde\gamma^{\flat}$ is strictly negative for any $\cyl_n^i\in \P_n^F$ and any $n$, so we set \eqref{eq:flat estimate} to be the value $\delta_{n}^{i,\flat}$.

For the upper bound on the free energy of measures giving $\cyl_n^i$ relatively large mass, we follow a similar proof, but with
\begin{equation*}
\Psi^\sharp(x)= \begin{cases} \Psi(x)-\log(1-\gamma)& \text{if } x\in \cyl_n^i,\\
\Psi(x) &  \text{if } x\in \cyl_n^j,\text{ for } j\neq i.
\end{cases}
\end{equation*}

Similarly to above, one can show that
$\M_F(\Psi)=\M_F(\Psi^\sharp)$ and
\begin{align*}
&\sup\left\{h_F(\mu) +\int\Psi~d\mu:\mu\in \M_F(\Psi),\ \mu(\cyl_n^i)> \frac{m_\Psi(\cyl_n^i)}{K (1-\gamma) \left[1+m_\Psi(\cyl_n^i)\left(\frac\gamma{1-\gamma}\right)\right]^n} \right\}\\
&\hspace{2mm} \le \sup\left\{h_F(\mu) +\int\Psi^\sharp~d\mu:\mu\in \M_F(\Psi^\sharp),\ \mu(\cyl_n^i)> \frac{m_\Psi(\cyl_n^i)}{K (1-\gamma) \left[1+m_\Psi(\cyl_n^i)\left(\frac\gamma{1-\gamma}\right)\right]^n} \right\}\\
&\hspace{10mm}+\frac{\log(1-\gamma)m_\Psi(\cyl_n^i)}{K (1-\gamma) \left[1+m_\Psi(\cyl_n^i)\left(\frac\gamma{1-\gamma}\right)\right]^n} \\
&\hspace{2mm}\le P^G(\Psi^\sharp)+ \frac{\log(1-\gamma)m_\Psi(\cyl_n^i)}{K (1-\gamma) \left[1+m_\Psi(\cyl_n^i)\left(\frac\gamma{1-\gamma}\right)\right]^n}.
\end{align*}
%\textbf{MT:changed equality to inequality}

Moreover, we can show that
$$P^G(\Psi^\sharp) = \log\left(1+ m_\Psi(\cyl_n^i) \left(\frac{\gamma}{1-\gamma}\right)\right)\le m_\Psi(\cyl_n^i) \left(\frac{\gamma}{1-\gamma}\right).$$

Therefore, if $\mu(\cyl_n^i)> \frac{m_\Psi(\cyl_n^i)}{K (1-\gamma) \left[1+m_\Psi(\cyl_n^i)\left(\frac\gamma{1-\gamma}\right)\right]^n}$, we have
\begin{equation}h_F(\mu) +\int\Psi~d\mu \le m_\Psi(\cyl_n^i) \left(\frac{\gamma}{1-\gamma}\right)+\frac{\log(1-\gamma) m_\Psi(\cyl_n^i)}{K (1-\gamma) \left[1+m_\Psi(\cyl_n^i)\left(\frac\gamma{1-\gamma}\right)\right]^n}.
\label{eq:sharp estimate}
\end{equation}

If $\lambda$ is sufficiently large then we can choose $\gamma=\tilde\gamma^\sharp\in (0,1)$ so that this is strictly negative and can be fixed to be our value $\delta_n^{i, \sharp}$. This can be seen as follows: let and $\gamma=p/(p+1)$ for some $p$ to be chosen later.  Then the right hand side of \eqref{eq:sharp estimate} becomes
\begin{equation}
m_\Psi(\cyl_n^i)\left(p+1\right)\left[\frac{p}{p+1} -\frac{\log(p+1)}{K(1+p e^{-\lambda n})^n} \right].
\label{eq:p sharp}
\end{equation} If $\lambda$ is sufficiently large, then there exists some large $\lambda'\in (0,\lambda)$ such that $(1+p e^{-\lambda n})^n \le 1+ p e^{-\lambda'n}$ for all $n\in \N$. Hence with this suitable choice of $\lambda$ we can choose $p$ so that the quantity in the square brackets in \eqref{eq:p sharp} is negative for all $n$.  So we can choose $\delta_n^{i,\sharp}<0$ to be \eqref{eq:sharp estimate} with $\gamma=\tilde\gamma^\sharp$.

We let
\begin{equation}
\gamma^\sharp=1-(1-\tilde\gamma^\sharp)\left(1+e^{-\lambda n} \left(\frac{\tilde\gamma^\sharp}{1- \tilde\gamma^\sharp}\right)\right)^n.
\label{eq:gamma sharp}
\end{equation}
For appropriately chosen $\lambda$ this is in $(0,1)$.

We set $\gamma_0':=\max\{\gamma^\flat, \gamma^\sharp\}$ and for each $\cyl_n^i\in \P_n^F$ we let $\delta_n^i:=\max\{\delta_n^{i, \flat}, \delta_n^{i, \sharp}\}$.  The proof of the proposition is completed by setting $\gamma_0:=1-K(1-\gamma_0')$, which we may assume is in $(0,1)$.
\end{proof}

\begin{prop}
There exists an inducing scheme $(X,F)$ such that for $t\in (t^-, t^+)$ and $\psi=\psi_t$, any sequence of measures $(\mu_n)_n$ with $h(\mu_n)-\int\psi~d\mu_n \to 0$ as $n\to \infty$ has a limit measure $\mu_\psi$ which is an equilibrium state for $\psi$.
\label{prop:conv to eq}
\end{prop}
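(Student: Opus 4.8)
The plan is to combine Proposition~\ref{prop:zero Gur} (which supplies an inducing scheme $(X,F)$ with $P^G(\Psi_t)=0$ carrying a sequence of compatible measures realising $p(t)$ with entropy bounded away from zero) with Proposition~\ref{prop:conv to Gibbs} (which says that any $F$-invariant measure whose cylinder masses deviate multiplicatively from the conformal measure $m_\Psi$ has free energy bounded below zero by a definite amount). First I would fix the scheme $(X,F)$ from Proposition~\ref{prop:zero Gur}, after replacing $F$ by the iterate $\tilde F^k$ required in Proposition~\ref{prop:conv to Gibbs} so that both conclusions hold simultaneously; note the measures $\mu_n$ are still compatible with the new scheme (passing to a power of $F$ only rescales inducing times) and still have entropy bounded below, so their lifts $\mu_{n,F}$ have uniformly bounded integral of $\tau$ and, via the Abramov/Kac relation, satisfy $h_F(\mu_{n,F})+\int\Psi~d\mu_{n,F}\to P^G(\Psi_t)=0$.

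The core step is to show that this sequence of lifted measures $\mu_{n,F}$ converges, and that the limit $\mu_{\psi,F}$ projects to an $f$-invariant measure $\mu_\psi$ which is the equilibrium state. For convergence I would argue that, for each fixed cylinder $\cyl_n^j\in\P_n^F$, Proposition~\ref{prop:conv to Gibbs} forces $\mu_{k,F}(\cyl_n^j)$ into the interval $[(1-\gamma_0)m_\Psi(\cyl_n^j),\, m_\Psi(\cyl_n^j)/(1-\gamma_0)]$ for all large $k$, because $h_F(\mu_{k,F})+\int\Psi~d\mu_{k,F}\to 0 > \delta_n^j$. A diagonal argument over the countable collection of cylinders then extracts a subsequential limit $\mu_{\psi,F}$ (a priori perhaps only a sub-probability, but the two-sided bound on cylinder masses, together with $\sum_{\cyl_1^i}m_\Psi(\cyl_1^i)=1$ and Lemma~\ref{lem:shrinking cylinders} controlling tails, gives tightness and hence that $\mu_{\psi,F}$ is genuinely a probability measure and is $F$-invariant). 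The two-sided Gibbs-type bounds $(1-\gamma_0)m_\Psi(\cyl_n^j)\le \mu_{\psi,F}(\cyl_n^j)\le m_\Psi(\cyl_n^j)/(1-\gamma_0)$ persist in the limit; by uniqueness of the Gibbs measure for $\Psi$ (Section~\ref{sec:CMS}) this pins down $\mu_{\psi,F}=\mu_\Psi$, so in fact the whole sequence converges, not just a subsequence.

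It remains to transfer back to $(I,f)$. I would check that $\int_X\tau~d\mu_{\psi,F}<\infty$: this is exactly the content of the next section (``the Gibbs measure has integrable inducing times''), so here I would either invoke it or, more in the spirit of a self-contained limiting argument, use $\int\tau~d\mu_{k,F}\le\theta$ uniformly (from $Cover(\epsilon)$ in Proposition~\ref{prop:zero Gur}) together with lower semicontinuity of $\mu_F\mapsto\int\tau~d\mu_F$ along the cylinder-wise convergence to conclude $\int\tau~d\mu_{\psi,F}\le\theta<\infty$. Then $\mu_{\psi,F}$ projects by \eqref{eq:lift} to $\mu_\psi\in\M_f$, and the Abramov formula converts $h_F(\mu_{\psi,F})+\int\Psi~d\mu_{\psi,F}=P^G(\Psi_t)=0$ into $h(\mu_\psi)-t\lambda(\mu_\psi)-p(t)\cdot\big(\int\tau~d\mu_{\psi,F}\big)=0$, i.e. $h(\mu_\psi)+\int\psi_t~d\mu_\psi=0$, so $\mu_\psi$ is an equilibrium state for $-t\log|Df|$.

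\textbf{Main obstacle.} The delicate point is the passage to the limit: controlling that no mass escapes to infinity (i.e. that the limit is a probability measure and $F$-invariant, not merely a sub-probability with defect), and that $\int\tau~d\mu_{\psi,F}$ stays finite in the limit. The two-sided cylinder bounds from Proposition~\ref{prop:conv to Gibbs} are precisely what make this work — without the \emph{upper} bound $\mu_F(\cyl_n^j)\le m_\Psi(\cyl_n^j)/(1-\gamma_0)$ one could not rule out loss of mass — but assembling these local bounds into tightness and into the identification $\mu_{\psi,F}=\mu_\Psi$ (hence genuine convergence of the full sequence) is the part that needs care.
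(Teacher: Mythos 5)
Your route is the paper's route: lift the $\mu_n$ to $\mu_{n,F}$ on the scheme $(X,F)$ with $F=\tilde F^k$, use Proposition~\ref{prop:conv to Gibbs} to sandwich $\mu_{n,F}(\cyl_n^j)$ between $(1-\gamma_0)m_\Psi(\cyl_n^j)$ and $m_\Psi(\cyl_n^j)/(1-\gamma_0)$ for large $n$, extract a weak$^*$ limit, identify it with the unique Gibbs measure $\mu_\Psi$, verify $\int\tau\,d\mu_\Psi<\infty$ by lower semicontinuity/MCT under the uniform bound on $\int\tau\,d\mu_{n,F}$, project, and apply Abramov. All of that matches.

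There is, however, one missing step. The proposition asserts that the \emph{given} sequence $(\mu_n)_n$ has a limit $\mu_\psi$ which is an equilibrium state. You prove that the lifted measures $\mu_{n,F}$ converge (weak$^*$, cylinder-wise) to $\mu_\Psi$ and that $\mu_\Psi$ projects to an equilibrium state $\mu_\psi$, but you never show that $\mu_n\to\mu_\psi$. This does not follow automatically from $\mu_{n,F}\to\mu_\Psi$: the projection in \eqref{eq:lift} is an infinite sum $\sum_i\sum_{k<\tau_i}\mu_F(X_i\cap f^{-k}(\cdot))$ divided by $\int\tau\,d\mu_F$, and neither the tail of the sum nor the normalising integral is controlled by weak$^*$ convergence alone. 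One needs uniform integrability of $\tau$ across the sequence, which here comes from the explicit bound $\mu_{F,n}\{\tau\ge j\}\le\theta k/j$ (valid since $\int\tau\,d\mu_{F,n}\le\theta k$), and the paper invokes the argument of \cite[Section~6]{FreiTo} precisely at this point. Your proof as written establishes existence of an equilibrium state but not convergence of the original sequence to it, which is what the proposition actually claims.

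A small bookkeeping slip, not affecting correctness: after replacing $\tilde F$ by $F=\tilde F^k$ the inducing time becomes $\tau=\tilde\tau+\tilde\tau\circ\tilde F+\cdots+\tilde\tau\circ\tilde F^{k-1}$, so the uniform bound is $\int\tau\,d\mu_{F,n}<\theta k$, not $\theta$.
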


Note that $(X,F)$ and $(\mu_n)_n$ can be chosen as in Proposition~\ref{prop:zero Gur}.

%\textbf{MT:changed the proof to include $k$-cylinders - which should be necessary for the Gibbs property (bit annoyed with the use of $k$ and $n$ - suppose that $n$ is used too much above).}

\begin{proof}
By Proposition~\ref{prop:zero Gur}, we can find $\theta>0$, an inducing scheme $(X,\tilde F)$ and a sequence of measures $(\mu_n)_n$ with $h(\mu_n)+\int\psi~d\mu_n \to 0$ each compatible with $(X,\tilde F)$ and with $\int\tilde\tau~d\mu_{\tilde F,n}<\theta$.  Proposition~\ref{prop:zero Gur} also implies $P^G(\tilde \Psi_t)=0$.   Taking $F=\tilde F^k$ for $k$ as in Proposition~\ref{prop:conv to Gibbs}, that proposition then implies that there exists $K'>0$ such that for any $\cyl_k\in \P_k^F$, for all large enough $n$,
$$\frac1{K'} \le \frac{\mu_{F,n}(\cyl_k)}{e^{S_k\Psi(x)}} \le K'$$ for all $x\in \cyl_k$ (note that as in Proposition~\ref{prop:conv to Gibbs}, we can actually take $K'=K/(1-\gamma_0)$ where $K$ is the distortion bound for $\tilde\Psi_t$).
\iffalse Note that $(\mu_{F,n})_n$ is tight (see \cite[Section 25]{Bill} for a discussion of this notion)  and that any limit of the sequence $\mu_{F,\infty}$ must satisfy the Gibbs property with distortion constant $K'$ and must have $\int\tau~d\mu_{F,\infty}<\theta$.  By the uniqueness of Gibbs measures (\cite{BuSar}),  $\mu_{F,\infty}=\mu_\Psi$.  Thus we can project $\mu_\Psi$ to $\mu_\psi$.\fi
Note that $(\mu_{F,n})_n$ is tight (see \cite[Section 25]{Bill} for a discussion of this notion) and that any limit of the sequence $\mu_{F,\infty}$ must satisfy the Gibbs property with distortion constant $K'$.  By the uniqueness of Gibbs measures (\cite{BuSar}),  $\mu_{F,\infty}=\mu_\Psi$.   We now show that $\int\tau~d\mu_{\Psi}<\theta k$.  First note that $\int\tau~d\mu_{F,n}= \int\tilde\tau^k~d\mu_{\tilde F,n}<\theta k$.  For the purposes of this proof we let $\tau_N:=\min\{\tau,N\}$.  By the Monotone Convergence Theorem,
$$\int\tau~d\mu_{\Psi}=\lim_{N\to \infty}\int\tau_N~d\mu_{\Psi}\le \lim_{N\to \infty}\limsup_{n\to \infty} \int\tau_N~d\mu_{F,n} \le \theta k.$$
Thus we can project $\mu_\Psi$ to $\mu_\psi$ by \eqref{eq:lift}.

The fact that $\mu_\psi$ is a weak$^*$ limit of $(\mu_n)_n$ follows as in, for example \cite[Section 6]{FreiTo}.  The fact that we have a  uniform bound $\mu_{F,n}\left\{\tau \ge j\right\}\le \theta k/j$ for all $n\in \N$ is again crucial in proving this.

The Abramov formula implies that $$\int\Psi~d\mu_\Psi= \left(\int\tau~d\mu_\Psi\right)\left(\int\psi~d\mu_\psi\right)= \left(\int\tau~d\mu_\Psi\right)(\lambda(\mu_\psi)-p(t)).$$
Since $\lambda(\mu)\in [\lambda_m, \lambda_M]$ and both $p(t)$ and $\int\tau~d\mu_\Psi$ are finite, this implies that $-\int\Psi~d\mu_\Psi<\infty$ and hence $\mu_\Psi$ is an equilibrium state for $\Psi$.  Using the Abramov formula again we have that $\mu_\psi$ is an equilibrium state for $\psi$.
\end{proof}

\begin{rem}
\iffalse
Here we give an example of a way our setting can be changed so that the arguments in Proposition~\ref{prop:conv to Gibbs} and \ref{prop:conv to eq} fail.  In the case of the (appropriately scaled) quadratic Chebyshev polynomial, $t^-\in (-\infty, 0)$.  In this case there is a periodic point $p$ such that the Dirac measure $\mu$ on the orbit of $p$ has $\lambda(\mu)=\lambda_M$.  Then for each $t<t^-$,
there is an inducing scheme $(X,F)$ such that for any sequence $(\mu_n)_n$ with $h(\mu_n)>0$ for all $n$ and
$$h(\mu_n)-t\lambda(\mu_n) \to p(t)$$ we have $\int\tau~d\mu_{F,n} \to \infty$.  Therefore it is important that $t\in (t^-, t^+)$.
\fi
Here we give an example of a way our setting can be changed so that the arguments in Proposition~\ref{prop:conv to Gibbs} and \ref{prop:conv to eq} fail.  In the case where $f$ is the (appropriately scaled) quadratic Chebyshev polynomial, $t^-\in (-\infty, 0)$.  In this case there is a periodic point $p$ such that the Dirac measure $\delta_p$ on the orbit of $p$ has $\lambda(\delta_p)=\lambda_M$.
The point $p$ is the image of the critical point which means that our class of inducing schemes can not be compatible with $\delta_0$ (indeed the only inducing scheme for $\delta_0$ has only one domain and the only measure compatible to it is $\delta_0$).

However, any measure $\mu\in \M_f$ orthogonal to $\delta_0$ must have $h(\mu)-t\lambda(\mu) \le h(\mu_1)-t\lambda(\mu_1)$ for all $t\in \R$ where $\mu_1$ is the acip.  In particular, $h(\mu)-t\lambda(\mu)<p(t)$ for $t<t^-$.  If $P^G(\Psi_t)=0$ then arguments similar to those in the proofs of Lemma~\ref{lem:press less zero} and Proposition~\ref{prop:zero Gur} imply that there are measures with free energy w.r.t. $\psi_t$ is arbitrarily close to zero and positive entropy.  This contradiction implies that for $t<t^-$, $P^G(\Psi_t)<0$ so we cannot begin to apply the arguments above to that case.  So it is important that $t\in (t^-, t^+)$.
\end{rem}

\section{Uniqueness of equilibrium states}
\label{sec:unique}

The result in Proposition~\ref{prop:conv to eq} gives the existence of equilibrium states for $-t\log|Df|$ for each $t\in (t^-, t^+)$.  In this section we obtain uniqueness.  To do this we will use more properties of the inducing schemes described in \cite{BTeqnat}.  They were produced in as first return maps to an interval in the so-called Hofbauer tower.  This theory was further developed in \cite{BTeqgen} and \cite{T}.   The following theorem gives some of their properties.

\begin{teo}\label{thm:schemes}
There exists a countable collection $\{(X^n,F_n)\}_n$ of inducing schemes with $\bd X^n \notin (X^n,F_n)^\infty$ such that:
\newcounter{Mcount}
\begin{list}{\alph{Mcount})}{\usecounter{Mcount} \itemsep 1.0mm \topsep 0.0mm \leftmargin=5mm}
\item any ergodic invariant probability measure $\mu$ with $\lambda(\mu)>0$ is compatible with one of the inducing schemes $(X^n, F_n)$.  In particular there exists and ergodic $F_n$-invariant probability measure $\mu_{F_n}$ which projects to $\mu$ as in \eqref{eq:lift};
\item any  ergodic equilibrium state for $-t\log|Df|$ where $t\in \R$ with $\lambda(\mu)>0$ is compatible with all inducing schemes $(X^n, F_n)$.
\end{list}
\end{teo}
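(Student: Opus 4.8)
The plan is to quote and assemble the inducing-scheme constructions of \cite{BTeqnat, BTeqgen, T}, and to show that the abstract list of properties stated here is exactly what is extracted there. First I would recall the Hofbauer tower $\hat I$ associated to the branch partition $\P_1^f$: this is a countable disjoint union of intervals (``levels'') $\hat D$, each a copy of some element of $\bigvee_{k=0}^m f^{-k}\P_1^f$, with a Markov map $\hat f:\hat I\to \hat I$ covering $f$ via a projection $\pi:\hat I\to I$. The key input is that any ergodic $\mu\in\M_f$ with $h(\mu)>0$ lifts to an ergodic $\hat\mu\in\M_{\hat f}$ with $\hat\mu\circ\pi^{-1}=\mu$ (Hofbauer, Keller, Bruin); and by Remark~\ref{rmk:prz}, for $f\in\F$ an ergodic measure with $\lambda(\mu)>0$ automatically has $h(\mu)>0$ unless it is supported on a repelling cycle, a case which can be folded in directly (or excluded since such a measure is trivially compatible with a one-domain scheme). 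For each level-1 subinterval $\hat X$ of the tower lying in a suitable combinatorial position — concretely, for each connected component $\hat X^n$ of the set of points whose first return to a fixed reference level is well defined — one takes the first-return map $\hat F_n=\hat f^{\hat\tau_n}$ to $\hat X^n$ and projects: $X^n:=\pi(\hat X^n)$, $F_n:=\pi\circ\hat F_n\circ(\pi|_{\hat X^n})^{-1}$, $\tau_n:=\hat\tau_n$. That these projected maps are genuine inducing schemes in the sense of Section~\ref{sec:ind schemes} — each $X^n_i$ mapped diffeomorphically onto $X^n$ with distortion bounded on all iterates — is exactly the content of \cite[Lemma 8]{BTeqnat}, using negative Schwarzian and the Koebe Lemma (cf. Remark~\ref{rmk:c2 vs c1}); and $\bd X^n\notin(X^n,F_n)^\infty$ holds because the endpoints of $\hat X^n$ are chosen to be (pre)images of critical points or cut points, which by construction never return.

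Next I would establish (a). Enumerate the countably many schemes $\{(X^n,F_n)\}_n$ so that every level of the tower is the reference level of some scheme, or more precisely so that the domains $\hat X^n$ together cover $\hat I$ up to the exceptional set of points that never return to any chosen reference level (a set of measure zero for any ergodic $\hat\mu$ with $h(\hat\mu)>0$ by ergodicity and the fact that $\hat f$ is transitive on the relevant part of the tower). Then for ergodic $\mu$ with $\lambda(\mu)>0$: lift to $\hat\mu$; by ergodicity $\hat\mu(\hat X^n)>0$ for some $n$; restrict $\hat\mu$ to $\hat X^n$ and normalise to get the first-return measure $\hat\mu_n$, which is $\hat F_n$-invariant; project it to $\mu_{F_n}\in\M_{F_n}$; Kac's formula gives $\int\hat\tau_n\,d\hat\mu_n=\hat\mu(\hat X^n)^{-1}<\infty$, so $\mu_{F_n}\in\M_{F_n}(-\tau_n)$ and, after checking $\mu\big(X^n\setminus(X^n,F_n)^\infty\big)=0$ (again an ergodicity argument: the set of points in $X^n$ whose $F_n$-orbit is eventually undefined is $F_n$-invariant of non-full measure, hence null), $\mu$ is compatible with $(X^n,F_n)$ and $\mu_{F_n}$ projects to $\mu$ via \eqref{eq:lift}. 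This is essentially \cite[Theorem 3]{BTeqnat}, restated.

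Finally for (b): let $\mu$ be an ergodic equilibrium state for $-t\log|Df|$, $t\in\R$, with $\lambda(\mu)>0$. The point is to upgrade ``compatible with \emph{one} scheme'' to ``compatible with \emph{every} scheme''. Fix an arbitrary $(X^m,F_m)$. Since $f$ is topologically transitive and (by Condition c)) has no attracting cycles or homtervals, the domain $X^m$ is visited by $\mu$-a.e. orbit: indeed $\pi(\hat X^m)$ has nonempty interior and $\mu$ is an equilibrium state of positive entropy hence of full support on $\supp\mu$, which is all of $I$ (by transitivity, for measures of positive entropy one gets $\supp\mu = I$, or at least $\mu(X^m)>0$). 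Hence $\hat\mu(\hat X^m)>0$, and the same first-return/Kac argument as in (a) applies: the only thing to verify is the \emph{integrability} $\int\hat\tau_m\,d\hat\mu_m<\infty$, i.e. $\int_{X^m}\tau_m\,d\mu_{F_m}<\infty$, which is automatic from Kac since $\hat\mu$ is a probability measure. So $\mu$ is compatible with $(X^m,F_m)$ for every $m$. This last universality is the step I expect to be the real obstacle: it rests on knowing that an equilibrium state for a geometric potential cannot concentrate away from any given tower level — one must rule out, using positivity of $\lambda(\mu)$ together with the transitivity hypothesis and the structure of the tower, that $\hat\mu$ gives zero mass to $\hat X^m$; the cleanest route is to invoke directly \cite[Theorem 3, Remark 6]{BTeqnat} and the Hofbauer-tower irreducibility results of \cite{BTeqgen, T}, which already package precisely this conclusion.
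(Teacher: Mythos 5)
Part (a) of your proposal is essentially the paper's argument: one quotes the lifting theory (Keller/Dobbs as recalled in Remark~\ref{rmk:c2 vs c1}) and the proof of \cite[Theorem 3]{BTeqnat}, exactly as you describe.

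Part (b), however, has a genuine gap. You write: ``$\mu(X^m)>0$ $\ldots$ Hence $\hat\mu(\hat X^m)>0$.'' This implication is false. The preimage $\pi^{-1}(X^m)$ in the Hofbauer tower consists of infinitely many intervals spread across infinitely many levels, of which $\hat X^m$ is only one; $\mu(X^m)>0$ only gives $\hat\mu\bigl(\pi^{-1}(X^m)\bigr)>0$, and the ergodic lift $\hat\mu$ could in principle charge other components while giving $\hat X^m$ zero mass. Transitivity of $\hat f$ on $\hat I_{\T}$ guarantees the orbit of $\hat\mu$-a.e.\ point visits every level, but \emph{not} that the invariant measure $\hat\mu$ itself assigns positive mass to the particular interval $\hat X^m$ you fixed; an invariant measure on a transitive system need not have full support. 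You acknowledge this (``the step I expect to be the real obstacle'') and then propose to ``invoke directly'' \cite[Theorem 3, Remark 6]{BTeqnat}, but that invocation does not resolve it either: Remark~6 there gives, for each $\epsilon>0$, a \emph{finite} family of schemes adequate for measures with $h(\mu)>\epsilon$, whereas part (b) requires compatibility with \emph{every} scheme for \emph{every} ergodic equilibrium state with $\lambda(\mu)>0$ — exactly the upgrade stressed in Remark~\ref{rmk:LE schemes}.

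The paper closes the gap in a qualitatively different way. Starting from the fact that $\mu$ is compatible with \emph{some} $(X^n,F_n)$ with $P^G(\Psi_n)=0$ (Lemma~\ref{lem:press zero}), it then combines a combinatorial ``topological connection'' between any two schemes (Claim~1 of the proof of \cite[Proposition 2]{BTeqnat}) with a ``metric connection'' coming from Proposition~\ref{prop:conv to Gibbs}: that proposition shows any measure whose cylinder masses deviate significantly from those of the conformal/Gibbs measure has strictly negative free energy, so an equilibrium state cannot starve any cylinder of mass. Transferring this estimate along the topological connection forces $\mu$ to be compatible with every $(X^{n'},F_{n'})$. This use of Proposition~\ref{prop:conv to Gibbs} — the central technical contribution of Section~\ref{sec:Gibbs integ} — is entirely absent from your proposal, and it is precisely the ingredient needed to repair the non-sequitur noted above.
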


\begin{rem}
Note that it is crucial in our applications of Theorem~\ref{thm:schemes}, for example in the proofs of Proposition~\ref{prop:unique} and Proposition~\ref{prop:t^-}, that in b) we are able to weaken the condition $h(\mu)>0$ to $\lambda(\mu)>0$ when we wish to lift measures.  This is why we need to use a countable number of inducing schemes in Theorem~\ref{thm:schemes} rather than the finite number in \cite[Remark 6]{BTeqnat}.
\label{rmk:LE schemes}
\end{rem}

Before proving Theorem~\ref{thm:schemes}, we prove the following easy lemma.

\begin{lema} \label{lem:press zero}
If  $t \in (t^-, t^+)$ and an equilibrium state $\mu_t$ from Proposition~\ref{prop:conv to eq} is compatible with an inducing scheme $(X,F)$, then $P^G(\Psi_t)=0$.  Moreover the lifted measure $\mu_{t,F}$ is a Gibbs measure and an equilibrium state for $\Psi_t$.
\end{lema}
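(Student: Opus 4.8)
The plan is to use the characterisation of $t^\pm$ together with the variational principle for countable Markov shifts (Proposition~\ref{prop:VarPri}) and the Abramov formula to pin down the Gurevich pressure of $\Psi_t$ at the value $0$. First I would note that by Lemma~\ref{lem:press less zero} we already have $P^G(\Psi_t)\le 0$ for every inducing scheme, so the only thing to prove is the reverse inequality $P^G(\Psi_t)\ge 0$. Let $\mu_t$ be the equilibrium state from Proposition~\ref{prop:conv to eq}, which by hypothesis is compatible with $(X,F)$; let $\mu_{t,F}$ be the lift guaranteed by compatibility, so that $\int_X\tau~d\mu_{t,F}<\infty$ and $\mu_{t,F}\in\M_F(-\tau)$. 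Since $\mu_t$ is an equilibrium state, $h(\mu_t)-t\lambda(\mu_t)=p(t)$, i.e.\ $h(\mu_t)+\int\psi_t~d\mu_t=0$. Applying the Abramov formula to the pair $(\mu_t,\mu_{t,F})$ and the potential $\psi_t$ with induced potential $\Psi_t$ gives
\[
h_F(\mu_{t,F})+\int\Psi_t~d\mu_{t,F}=\left(\int_X\tau~d\mu_{t,F}\right)\left(h(\mu_t)+\int\psi_t~d\mu_t\right)=0.
\]
(Here one must also check $-\int\Psi_t~d\mu_{t,F}<\infty$, which follows exactly as at the end of the proof of Proposition~\ref{prop:conv to eq}: $\lambda(\mu_t)\in[\lambda_m,\lambda_M]$ and $p(t)$, $\int\tau~d\mu_{t,F}$ are finite.) Hence $\mu_{t,F}$ is a measure in $\M_F(\Psi_t)$ with free energy $0$ with respect to $\Psi_t$, so by the variational principle (Proposition~\ref{prop:VarPri}) $P^G(\Psi_t)\ge 0$. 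Combined with Lemma~\ref{lem:press less zero} this yields $P^G(\Psi_t)=0$.

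Given $P^G(\Psi_t)=0$, the remaining assertions are immediate from the general countable-Markov-shift theory recalled in Section~\ref{sec:CMS}. Since $(X,F)$ is coded by the full shift on a countable alphabet (Remark~\ref{rmk:conj to shift}), $\Psi_t$ has summable variations (indeed is weakly H\"older, Remark~\ref{rmk:weak Holder}) and $P^G(\Psi_t)=0<\infty$, the results of Mauldin--Urba\'nski and Sarig quoted there give a (unique) invariant Gibbs measure for $\Psi_t$ with $P=P^G(\Psi_t)=0$; by the uniqueness of Gibbs measures this Gibbs measure must coincide with $\mu_{t,F}$ (alternatively, one sees directly from the bounded-distortion property established in the proof of Proposition~\ref{prop:conv to eq} that $\mu_{t,F}$ satisfies the Gibbs inequalities with $P=0$). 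Finally, since $-\int\Psi_t~d\mu_{t,F}<\infty$, the Gibbs/equilibrium correspondence from Section~\ref{sec:CMS} shows $\mu_{t,F}$ is an equilibrium state for $\Psi_t$.

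I expect the only genuinely delicate point to be the bookkeeping around finiteness: one needs $\int_X\tau~d\mu_{t,F}<\infty$ (supplied by compatibility) and $-\int\Psi_t~d\mu_{t,F}<\infty$ so that the Abramov formula and the variational principle apply with all quantities finite, and one must make sure the ``free energy $=0$'' computation is not an ill-defined $\infty-\infty$. All of this is handled exactly as in the proof of Proposition~\ref{prop:conv to eq}, so the lemma really is, as the authors say, easy once that proposition is in hand; the role of the hypothesis $t\in(t^-,t^+)$ enters only through the existence of the equilibrium state $\mu_t$ with $h(\mu_t)>0$ furnished by Proposition~\ref{prop:conv to eq}.
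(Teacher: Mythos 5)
Your proposal is essentially the paper's proof: Lemma~\ref{lem:press less zero} gives the upper bound, and the Abramov formula applied to the lift $\mu_{t,F}$ (whose integrability of $\tau$ comes from compatibility) shows the induced free energy equals $\bigl(\int\tau\,d\mu_{t,F}\bigr)\bigl(h(\mu_t)-t\lambda(\mu_t)-p(t)\bigr)=0$, whence $P^G(\Psi_t)\ge 0$ by the variational principle. The computation is identical to the paper's.

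One small wrinkle in your concluding paragraph: you invoke ``uniqueness of Gibbs measures'' to conclude that the Gibbs measure coincides with $\mu_{t,F}$, but that step is circular unless you already know $\mu_{t,F}$ is a Gibbs measure. The cleaner route (and the one the paper takes) is the one you already set up with the Abramov computation: $h_F(\mu_{t,F})+\int\Psi_t\,d\mu_{t,F}=0=P^G(\Psi_t)$ and $-\int\Psi_t\,d\mu_{t,F}<\infty$ make $\mu_{t,F}$ an equilibrium state for $\Psi_t$; by \cite{BuSar} equilibrium states are unique and equal the Gibbs measure, so $\mu_{t,F}$ \emph{is} the Gibbs measure. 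Your parenthetical alternative (checking the Gibbs inequalities directly from bounded distortion) is also viable, but the equilibrium-state route is the shorter path and you already have all its ingredients in hand.
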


\begin{proof}
First note that by Lemma~\ref{lem:press less zero}, $P^G(\Psi_t)\le 0$.

Denote by $\mu_t$ an equilibrium measure for the potential $-t \log |Df|$ of positive Lyapunov exponent and let $\mu_{t,F}$ be the lifted measure.
Note that by Proposition~\ref{prop:VarPri} and by the Abramov formula, see for example \cite[Theorem 2.3]{PeSe}, we have
\begin{align*}
P^G\left(\Psi_t\right)  &\ge h(\mu_{t,F})+\int\Psi_t~d\mu_{t, F} =h(\mu_{t,F}) -t \int \log|DF| \ d \mu_{t,F} -p(t) \int \tau \ d \mu_{t,F} \\
&= \left(\int \tau \ d \mu_{t,F}\right) \left( \frac{h(\mu_{t,F})}{\int \tau \ d \mu_{t,F}} -t\left( \frac{\int \log|DF|~d \mu_{t,F}}{\int \tau~d \mu_{t,F}}\right)  -p(t) \right) \\
&=\left(\int \tau \ d \mu_{t,F}\right) \left( h(\mu_t) -t \int \log |Df| \ d \mu_t -p(t) \right).\end{align*}
But recall that $\mu_t$ is an equilibrium measure:
\[  p(t) = h(\mu_t) -t \int \log |Df| \ d \mu_t.\]
Therefore $P^G(\Psi_t) \ge 0$.

Since $P^G(\Psi_t)=0$ there exists a unique Gibbs measure $\mu_F$ corresponding to  $(X,F, \Psi_t)$.  By the Abramov formula,
$$h(\mu_{t,F})+\int \Psi_t~d \mu_{t,F}=0,$$ so $\mu_{t, F}$ is an equilibrium state for $(X,F, \Psi_t)$.  Since, in this setting,  equilibrium states are unique (see \cite{BuSar}) we have that $\mu_{t,F}=\mu_F$.
 \end{proof}

\begin{proof}[Proof of Theorem~\ref{thm:schemes}]
Part (a) of the theorem follows from the proof of \cite[Theorem 3]{BTeqnat}.  Part (b) is proved similarly to \cite[Proposition 2]{BTeqnat}, but with added information from our Proposition~\ref{prop:conv to Gibbs}.  We sketch some details. Suppose that $\mu$ is compatible to $(X^n, F_n)$.  Then Lemma~\ref{lem:press zero} implies that $P^G(\Psi_n)=0$.  Claim 1 of the proof of \cite[Proposition 2]{BTeqnat} implies that for any other inducing scheme $(X^{n'}, F_{n'})$ is `topologically connected' to $(X^n, F_n)$.  Proposition~\ref{prop:conv to Gibbs}, which is an improved version of Claim 2 in the proof of \cite[Proposition 2]{BTeqnat}, then can be used as in that proof to give a `metric connection' which means that an equilibrium state compatible with $(X^n,F_n)$ must be compatible with $(X^{n'}, F_{n'})$.
\end{proof}

\begin{prop}
For any $t\in (-\infty,t^+)$ there is at most one equilibrium state  for $-t\log|Df|$.  Moreover, if $t^+>1$ then for any $t\in \R$  there is at most one equilibrium state  for $-t\log|Df|$. \label{prop:unique}
\end{prop}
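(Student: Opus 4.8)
The plan is to derive uniqueness from the theory of inducing schemes in Theorem~\ref{thm:schemes} together with the fact, established in Lemma~\ref{lem:press zero}, that any equilibrium state of positive Lyapunov exponent lifts to a Gibbs measure of zero Gurevich pressure, which is unique by \cite{BuSar}. The one genuine issue to dispose of is the possibility of an equilibrium state with zero Lyapunov exponent, which cannot be lifted and to which the symbolic machinery does not directly apply. So the argument naturally splits into two cases.

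First I would handle the case $t\in(t^-,t^+)$. By Proposition~\ref{prop:conv to eq} there is an equilibrium state $\mu_t$ (with positive entropy, hence, once we know $\lambda(\mu_t)\ge\lambda_m\ge 0$ from Remark~\ref{rmk:prz} and the Ruelle inequality $h(\mu_t)\le\lambda(\mu_t)$, in fact positive Lyapunov exponent). Suppose $\nu$ is any other equilibrium state for $-t\log|Df|$; decomposing into ergodic components we may assume $\nu$ ergodic. Since $p$ is strictly above its asymptotic linear part on $(t^-,t^+)$, any equilibrium state here has positive free energy contribution from entropy, so by the Ruelle inequality again $\lambda(\nu)>0$; alternatively one invokes Lemma~\ref{lem:pos ent} to see $h(\nu)>\eta>0$. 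Then by Theorem~\ref{thm:schemes}(b), both $\mu_t$ and $\nu$ are compatible with a common inducing scheme $(X^n,F_n)$, and by Lemma~\ref{lem:press zero} their lifts $\mu_{t,F}$ and $\nu_F$ are both Gibbs measures, hence both equal to the unique Gibbs/equilibrium measure $\mu_{\Psi_t}$ for $(X^n,F_n,\Psi_t)$. Since the projection operation \eqref{eq:lift} is a well-defined inverse to lifting, $\mu_t=\nu$.

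Next, for $t\le t^-$ (which we will later see means every $t\le t^-$ since $t^-=-\infty$ for maps in $\F$, but the argument should not rely on that): here $p(t)=-\lambda_M t$ and $p$ is linear, so any equilibrium state $\mu$ satisfies $h(\mu)-t\lambda(\mu)=-\lambda_M t$; combined with $h(\mu)\ge 0$ and $\lambda(\mu)\le\lambda_M$ this forces $h(\mu)=0$ and $\lambda(\mu)=\lambda_M$ when $t<0$, so measures of positive entropy are excluded and the lifting argument does not apply. In the regime $t^+>1$ one must also cover $t\in[t^+,\infty)$ — but $t^+=\sup\{t:p(t)>-\lambda_m t\}$, and for $t\ge t^+$ we have $p(t)=-\lambda_m t$, so again an equilibrium state has $h(\mu)=0$ (for $t>0$) and $\lambda(\mu)=\lambda_m$. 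So in both of these boundary/exterior regimes, uniqueness of the equilibrium state reduces to: there is a unique $\mu\in\M_f$ minimising (resp.\ maximising) $\lambda(\mu)$ among measures of zero entropy — equivalently, a unique measure of maximal/minimal Lyapunov exponent. I expect this to be the main obstacle, and I would address it via an ergodic-optimisation argument: the maximum of $\lambda$ is attained on a periodic orbit when the map is Collet–Eckmann-like, and more generally uniqueness of the maximising measure follows because $\log|Df|$ has a unique maximising measure for a transitive map with the regularity in $\F$ (this is essentially the discussion promised in Section~\ref{sec:remarks} on the ergodic optimisation problem); for the minimum one uses $\lambda_m=0$ in the non-Collet–Eckmann quadratic case and the relevant structure otherwise. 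The statement "$t^+>1$" is presumably what guarantees that the full line is covered without a gap, so I would check that $(t^-,t^+)\cup(-\infty,t^-]\cup[t^+,\infty)=\R$ and that $t=t^+$ itself is caught by the boundary analysis.

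In summary, the skeleton is: (i) reduce to ergodic equilibrium states; (ii) on $(t^-,t^+)$, use positivity of entropy (Lemma~\ref{lem:pos ent}) to guarantee positive Lyapunov exponent, then use Theorem~\ref{thm:schemes}(b) plus Lemma~\ref{lem:press zero} plus uniqueness of Gibbs measures to conclude; (iii) on the linear pieces $t\le t^-$ and $t\ge t^+$, observe that equilibrium states must have zero entropy and extremal Lyapunov exponent, and invoke uniqueness of the Lyapunov-extremising measure. I would flag step (iii) as the part requiring the most care, since it is where the symbolic tools break down and a separate optimisation argument is needed.
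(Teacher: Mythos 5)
Your treatment of the interior range $t\in(t^-,t^+)$ follows the same plan as the paper: ergodic decomposition, positive entropy via Lemma~\ref{lem:pos ent}, compatibility via Theorem~\ref{thm:schemes}, zero Gurevich pressure via Lemma~\ref{lem:press zero}, and uniqueness of Gibbs/equilibrium measures from \cite{BuSar}. That part is sound, and once $t^-=-\infty$ is in place (Proposition~\ref{prop:t^-}) it covers all of $(-\infty,t^+)$, so the separate ``$t\le t^-$'' branch you introduce is vacuous for maps in $\F$; if you insist on independence of that fact you would need to address it, but it is not how the paper argues.

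The genuine problem is step~(iii), the extension to $t\ge t^+$ when $t^+>1$. You propose to show that equilibrium states there must have zero entropy and Lyapunov exponent $\lambda_m$, and then to invoke \emph{uniqueness of the $\lambda$-minimising measure}. That last step is false in general: the paper explicitly records (in the remark following Remark~\ref{rmk:min meas}) that there are maps $f\in\F$ for which the omega-limit set of the critical point supports \emph{multiple} ergodic measures with $\lambda=\lambda_m$, citing \cite{Brminim}. So uniqueness of the Lyapunov-optimising measure cannot be used as a black box. Moreover, even the claim that all equilibrium states for $t>t^+$ have zero entropy is not quite right: an ergodic equilibrium state there satisfies $h(\mu)=t\bigl(\lambda(\mu)-\lambda_m\bigr)$, which vanishes only if $\lambda(\mu)=\lambda_m$, and nothing a priori forces that.

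The paper's route is shorter and avoids these pitfalls entirely. Observe that if $t^+>1$ then $p$ is strictly decreasing on $(1,t^+)$ (since $\lambda_m\ge0$), so by Bowen's formula $p(t)\le0$ becomes $p(t)<0$ there, and Ruelle's inequality then forces $\lambda_m>0$. Consequently \emph{every} measure in $\M_f$ has positive Lyapunov exponent, so Theorem~\ref{thm:schemes}(b), which only requires $\lambda(\mu)>0$ and not $h(\mu)>0$, applies to every ergodic equilibrium state at every $t\in\R$. The inducing-scheme uniqueness argument then runs unchanged for $t\ge t^+$. This is precisely why the paper uses countably many inducing schemes keyed to positive Lyapunov exponent rather than finitely many keyed to positive entropy (see Remark~\ref{rmk:LE schemes}). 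You should replace your ergodic-optimisation step with this observation.
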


Clearly the equilibrium states, when unique, must be ergodic.
\begin{proof}
The idea here is first to show that any equilibrium state can be decomposed into a sum of countably many measures, each of which is an equilibrium state and is compatible with an inducing scheme as in Theorem~\ref{thm:schemes}.  \cite{BuSar} implies that there is only one equilibrium state per inducing scheme.  Lemma~\ref{lem:press zero} then implies that this equilibrium state must be unique.

We suppose that $\mu$ is an equilibrium state for $-t\log|Df|$ for $t\in (-\infty,t^+)$.  We first note that $\mu$ may be expressed in terms of its ergodic decomposition, see for example \cite[Section 2.3]{Kellbook},  $\mu(\cdot)=\int\mu_y(\cdot)~d\mu(y)$ where $y\in I$ is a generic point of the ergodic measure $\mu_y\in \M_f$.  Clearly, for any set $A\subset I$ such that $\mu(A)>0$, the measure $\mu_A(\cdot):=\frac1{\mu(A)}\int_A\mu_y(\cdot)~d\mu(y)$ must have
$$h(\mu_A)-t\lambda(\mu_A)=p(t),$$
i.e. it must be an equilibrium state itself (otherwise, removing $\mu_A$ from the integral for $\mu$ would increase $h_\mu-t\lambda(\mu)$).
As in the proof of Lemma~\ref{lem:pos ent}, $\lambda(\mu_A)>0$.

Theorem~\ref{thm:schemes}(a) implies that any such $\mu_A$ must decompose into a sum $\mu=\sum_n\alpha_n\mu_n$ where $\mu_n$ is a probability measure compatible with the scheme $(X^n,F_n)$ and $\alpha_n\in (0,1]$.  Then there are $F_n$-invariant probability measures $\mu_{F_n}$, each of which projects to $\mu_n$ by \eqref{eq:lift}.

 %By the Abramov formula, we have that  $$h(\mu_{F_n})-t\lambda(\mu_{F_n})=p(t)\int\tau~d\mu_{F_n}=0.$$
By Lemma~\ref{lem:press zero} and \cite{BuSar}, $\mu_{F_n}$ must be the unique equilibrium state for the scheme $(X^n, F_n, \tau_n)$ with potential $-t\log|DF_n|-p(t)\tau_n$.  Therefore, $\mu_n$ is the only equilibrium state for $-t\log|Df|$ which is compatible with $(X^n,F_n)$.

We finish the proof by using Theorem~\ref{thm:schemes} b) which implies that any of these equilibrium states compatible with an inducing scheme $(X^n,F_n)$ as above must be compatible with each of the other inducing schemes $(X_j,F_j)$.  Hence $\mu_i=\mu_j$ for every $i,j\in \N$.  Since $\mu$ was an arbitrary equilibrium state, this argument implies that $\mu$ is ergodic and is the unique equilibrium state for $-t\log|Df|$, as required.

Suppose that $t^+>1$.  Since $\lambda_m\ge 0$ this means that $t\mapsto p(t)$ must be strictly decreasing in the interval $(1, t^+)$.  Since Bowen's formula implies that $p(t)\le 0$ this means that $p(t)<0$.  Ruelle's formula \cite{Ruelleineq} then implies that we must have $\lambda_m>0$.  Therefore, if $t^+>1$ then $\lambda(\mu)>0$ for all $\mu\in \M_f$ and so we can apply Theorem~\ref{thm:schemes} to the case $t\ge t^+$ also.
\end{proof}

\section{Proof of Theorem~\ref{thm:eq_exist_unique}}
\label{sec:main thm}

The previous sections give most of the information we need to prove
Theorem~\ref{thm:eq_exist_unique}.  In this section we prove the remaining part: that the critical parameter $t^-$, defined in equation \eqref{eq:t plus minus}, is not finite.  We then put the proof of Theorem~\ref{thm:eq_exist_unique} together.

\begin{lema}
There exists a measure $\mu_M$ such that $\lambda(\mu_M)=\lambda_M$.
\label{lem:max meas}
\end{lema}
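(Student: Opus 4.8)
The plan is to argue that the functional $\mu \mapsto \lambda(\mu) = \int \log|Df|\, d\mu$ attains its supremum over $\M_f$, exploiting the structure we already have on the pressure function together with the $C^1$ and convexity information encoded in the constants $t^-$ and $t^+$. The first observation is that $\log|Df|$ is bounded above on $I$ (since $f$ is $C^{1+}$ on a compact interval), so $\lambda_M < \infty$; what is not clear a priori is that the supremum is achieved, because $\mu \mapsto \lambda(\mu)$ fails to be upper semicontinuous in the weak$^*$ topology (this is exactly the phenomenon flagged in the discussion of \cite{BrKell} right after Theorem~\ref{thm:eq_exist_unique}). So a naive compactness argument does not suffice, and one must use thermodynamic input.

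The main step I would take is to analyse the pressure function $p(t)=P(-t\log|Df|)$ as $t\to -\infty$. Since $p$ is convex and, as asserted in the excerpt, $t^- = -\infty$ for maps in $\F$ — meaning $p$ is never linear on a half-line $(-\infty, t^-]$ — the slopes $D^-p(t), D^+p(t)$ are strictly increasing and $D^+p(t) \to -\lambda_M$ as $t\to -\infty$ (by the definition $t^- = \inf\{t : p(t) > -\lambda_M t\}$ and the convexity of $p$, the asymptotic slope of $p$ at $-\infty$ is $-\lambda_M$). For each $t$, by the Approximation Property (Proposition~\ref{prop:approx}) applied to an appropriate inducing scheme, or more directly by the general theory of equilibrium states for the geometric potential on maps in $\F$, one obtains a sequence of invariant measures $\nu_t$ with $h(\nu_t) - t\lambda(\nu_t) \to p(t)$, and by choosing the derivative/support-line structure one gets $\lambda(\nu_t) \to \lambda_M$ as $t \to -\infty$. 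The cleanest route: take $t_k \to -\infty$, take for each $k$ a measure $\nu_k$ with $h(\nu_k) - t_k\lambda(\nu_k)$ within $1/k$ of $p(t_k)$; then $\lambda(\nu_k) \to \lambda_M$. Now I would feed this sequence into the inducing-scheme machinery: by Theorem~\ref{thm:schemes}(a) (or Lemma~\ref{lem:bdd ind int}, after checking the entropies stay bounded below, which holds by Lemma~\ref{lem:pos ent}-type reasoning since $t_k \in (t^-,t^+)=(-\infty,t^+)$), all the $\nu_k$ are compatible with one of finitely many inducing schemes, with uniformly bounded inducing times. As in the proof of Proposition~\ref{prop:conv to eq}, the lifted measures $\nu_{k,F}$ are tight, so a subsequence converges to a measure $\mu_{M,F}$ with $\int \tau\, d\mu_{M,F} < \infty$; projecting back gives $\mu_M \in \M_f$. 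Finally I would verify $\lambda(\mu_M) = \lambda_M$: the lift-project correspondence and the Abramov formula convert the convergence $\lambda(\nu_k)\to\lambda_M$ on the base into convergence of the corresponding integrals on the tower, where the uniform integrability ($\mu_{k,F}\{\tau \ge j\} \le \theta/j$) lets one pass to the limit, exactly as in Proposition~\ref{prop:conv to eq}; combined with $\lambda(\mu_M)\le \lambda_M$ this gives equality.

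The main obstacle is the failure of upper semicontinuity of $\mu\mapsto\lambda(\mu)$: one cannot simply take a weak$^*$ limit of a maximising sequence in $\M_f$ and conclude, because mass can "escape to the critical orbit" where $\log|Df| = -\infty$ and the limit measure can have strictly smaller Lyapunov exponent. The device that rescues the argument is precisely the inducing-scheme technology of Sections~\ref{sec:ind schemes}--\ref{sec:Gibbs integ}: working with the lifted measures on the Hofbauer-type tower, where inducing times are uniformly integrable, restores enough compactness/continuity to pass to the limit. A secondary technical point to watch is ensuring the maximising sequence $(\nu_k)$ can be chosen with entropies bounded away from zero and with a single inducing scheme accommodating all of them — this is where one must invoke that $\lambda_M$ is approached by measures that are \emph{not} supported on the critical orbit (which follows from $p(t) > -\lambda_M t$ for $t > -\infty$, i.e.\ from $t^- = -\infty$), so that the free-energy-positive, positive-entropy measures in Lemma~\ref{lem:bdd ind int} / Theorem~\ref{thm:schemes} are available. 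If a genuinely worry arises that $\lambda_M$ is realised only in the limit by degenerate measures, one would instead invoke Ruelle's inequality and the strict decrease of $p$ to locate interior equilibrium states whose Lyapunov exponents sweep out $[\lambda_m,\lambda_M)$, and then take the limit of those.
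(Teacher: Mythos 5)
Your proposal starts from a false premise and ends up far more complicated than necessary. You assert that ``$\mu \mapsto \lambda(\mu)$ fails to be upper semicontinuous in the weak$^*$ topology,'' citing the discussion around \cite{BrKell}. But re-read that passage: it is $\mu \mapsto -\lambda(\mu)$ (equivalently $\mu \mapsto \int(-\log|Df|)\,d\mu$) that fails to be upper semicontinuous, because $-\log|Df|$ blows up to $+\infty$ at the critical points and hence is not upper semicontinuous as a function on $I$. The function $x \mapsto \log|Df(x)|$, by contrast, \emph{is} upper semicontinuous on $I$: it is continuous off $\crit$ and tends to $-\infty$ at critical points, which is precisely the allowed direction for a jump. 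For an upper semicontinuous integrand $\phi$, the functional $\mu \mapsto \int\phi\,d\mu$ is upper semicontinuous on the compact space $\M_f$ in the weak$^*$ topology, and an upper semicontinuous functional on a compact set attains its supremum. That is the entire proof in the paper — two lines. The asymmetry you overlooked is exactly why $\lambda_M$ is always attained while $\lambda_m$ may fail to be attained (as in the remark about \cite[Lemma 5.5]{BrKell} following Remark~\ref{rmk:min meas}).

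A second, independent problem is logical order: your argument invokes $t^- = -\infty$, which is Proposition~\ref{prop:t^-} — a result proved \emph{after} this lemma and whose proof \emph{uses} this lemma (it needs the measure $\mu_M$ to derive a contradiction). So even setting aside the semicontinuity issue, the route you sketch would be circular. The elaborate inducing-scheme and tightness machinery you propose is the right tool when upper semicontinuity genuinely fails (as it does for $\lambda_m$ and for the equilibrium-state existence at $t>0$), but here it is not needed and in fact cannot be set up without circularity.
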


\begin{proof}
This follows from the compactness of $\M_f$ and the upper semicontinuity of $x\mapsto \log|Df(x)|$.
\end{proof}

\begin{prop}
$t^-=-\infty$.
\label{prop:t^-}
\end{prop}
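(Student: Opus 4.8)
The plan is to reduce the statement to the regularity of the pressure read off from an inducing scheme, and to feed in $p(0)=h_{top}(f)>0$ to get a contradiction. First observe the easy case: if the measure $\mu_M$ of Lemma~\ref{lem:max meas} has $h(\mu_M)>0$, then $p(t)\ge h(\mu_M)-t\lambda(\mu_M)=h(\mu_M)-\lambda_M t>-\lambda_M t$ for every $t$, so $\{t:p(t)>-\lambda_M t\}=\R$ and $t^-=-\infty$. So from now on assume $h(\mu_M)=0$; note $\lambda_M>0$, since $\lambda_M\ge h_{top}(f)>0$ (by Remark~\ref{rmk:prz} and Ruelle's inequality every $\mu\in\M_f$ has $h(\mu)\le\lambda(\mu)$, and $h_{top}(f)>0$ because $f$ is transitive and multimodal). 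Now argue by contradiction: suppose $t^-\in\R$. Then $p(t)=-\lambda_M t$ for all $t\le t^-$, which says precisely that $\mu_M$ is an equilibrium state for $-t\log|Df|$ for every $t\le t^-$.

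Since $\lambda(\mu_M)=\lambda_M>0$, Theorem~\ref{thm:schemes}(a) gives an inducing scheme $(X,F,\tau)$ and a lift $\mu_{M,F}$ of $\mu_M$ with $\int_X\tau\,d\mu_{M,F}<\infty$. For $t\le t^-$ I would run the computation from the proof of Lemma~\ref{lem:press zero} with $\mu_M$ in place of $\mu_t$: using the Variational Principle and the Abramov formula, and the identity $h(\mu_M)-t\lambda(\mu_M)-p(t)=-\lambda_M t-(-\lambda_M t)=0$, this yields $P^G_F(\Psi_t)\ge h_F(\mu_{M,F})+\int\Psi_t\,d\mu_{M,F}=0$, while Lemma~\ref{lem:press less zero} gives $P^G_F(\Psi_t)\le 0$; hence $P^G_F(\Psi_t)=0$. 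For $t\in(t^-,t^+)$ the same conclusion holds by Lemma~\ref{lem:press zero} applied to the equilibrium states of Proposition~\ref{prop:conv to eq}. Therefore, for every $t\in(-\infty,t^+)$, $p(t)$ is the unique root $s$ of $P^G_F(-t\log|DF|-s\tau)=0$ (uniqueness because $s\mapsto P^G_F(-t\log|DF|-s\tau)$ is strictly decreasing where finite, as $S_n\tau\ge n$).

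Next I would invoke Sarig's theory: this implicitly defined function $t\mapsto p(t)$ is real-analytic on the interval where it is defined and the coding countable Markov shift is positive recurrent, hence on all of $(-\infty,t^+)$; so $p$ is real-analytic there. But $p$ coincides with the affine function $t\mapsto-\lambda_M t$ on $(-\infty,t^-]$, so by the identity theorem $p(t)=-\lambda_M t$ throughout $(-\infty,t^+)$; evaluating at $t=0$ gives $p(0)=0$, contradicting $p(0)=P(0)=h_{top}(f)>0$. Hence $t^-=-\infty$.

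The step I expect to be the real obstacle is the regularity input: one must check that for each $t<t^+$ the root $p(t)$ lies in the interior of the positive-recurrent (analytic) regime for the induced system rather than on its boundary, i.e. that $P^G_F(-t\log|DF|-s\tau)$ is finite for $s$ slightly above $p(t)$. This is essentially the same analysis underpinning Theorem~\ref{thm:smooth}, so a more self-contained alternative would be to bypass analyticity and instead construct, from the scheme lifting $\mu_M$, measures $\mu_q$ with $\lambda(\mu_q)\to\lambda_M$ and $h(\mu_q)/(\lambda_M-\lambda(\mu_q))\to\infty$ — obtained by ``diluting'' $\mu_M$ with a vanishing density $q$ of excursions through one fixed branch, so that bounded distortion forces the Lyapunov deficit to be $O(q)$ while the Abramov formula makes the entropy of order $q\log(1/q)$ — which again forces $p(t)>-\lambda_M t$ for arbitrarily negative $t$, i.e. $t^-=-\infty$.
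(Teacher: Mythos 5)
Your reduction to the case $h(\mu_M)=0$ and your derivation of $P^G_F(\Psi_t)=0$ for $t\le t^-$ are both correct and in fact mirror the opening of the paper's argument. But the route you take from there has a genuine gap, and you more or less flag it yourself: you need $t\mapsto p(t)$ to be real-analytic on all of $(-\infty,t^+)$ to run the identity-theorem contradiction, and that requires the induced potential $-t\log|DF|-s\tau$ to be \emph{strongly positive recurrent} (with finite pressure in a two-sided neighbourhood of the root $s=p(t)$) at every such $t$, uniformly enough that the implicit function theorem applies. The paper never establishes analyticity of $p$ — Theorem~\ref{thm:smooth} only gives $C^1$ — and verifying the recurrence/finiteness hypotheses at the boundary parameter $t^-$ is essentially the same difficulty you are trying to avoid. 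So as written, your main argument is circular in spirit: it uses a regularity statement at least as strong as what you are trying to prove. Your fallback ``dilution'' construction is a plausible idea, but it is only a sketch; the quantitative claims ($\lambda_M-\lambda(\mu_q)=O(q)$ while $h(\mu_q)\asymp q\log(1/q)$) would need careful verification via the Gibbs/distortion estimates, and you would still need to know that the scheme lifting $\mu_M$ actually has an extra branch to dilute through.

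The paper's proof is shorter and avoids any regularity input. After establishing, exactly as you do, that $\mu_M$ is compatible with a scheme $(X,F)$ and that $P^G_F(\Psi_t)=0$ for $t\le t^-$, it notes that the Abramov formula forces $h(\mu_{F,M})+\int\Psi_t\,d\mu_{F,M}=0$, so $\mu_{F,M}$ is an \emph{equilibrium state for the induced potential $\Psi_t$}. Since $h(\mu_M)=0$, Abramov again gives $h(\mu_{F,M})=\left(\int\tau\,d\mu_{F,M}\right)h(\mu_M)=0$. But Theorems 1.1 and 1.2 of Buzzi--Sarig say that any equilibrium state for a countable Markov shift with a potential of summable variations and zero Gurevich pressure has strictly positive entropy. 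That contradiction finishes the proof in one line. The lesson is that the positive-entropy constraint from \cite{BuSar} is exactly the structural input needed here, and it sidesteps the positive-recurrence bookkeeping that your analyticity route would require.
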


\begin{proof}
Suppose, for a contradiction, that $t^->-\infty$.  This implies that for $t\le t^-$, the measure $\mu_M$ in Lemma~\ref{lem:max meas} also maximises $h(\mu)-t\lambda(\mu)$ for $\mu\in \M_f$, and must have $h(\mu_M)=0$.

By Theorem~\ref{thm:schemes}, we can choose an inducing scheme $(X,F)$ compatible with $\mu_M$.
\begin{claim}
$P^G(\Psi_{t})=0$ for all $t\le t^-$.
\end{claim}

\begin{proof}
$P^G(\Psi_{t})\le 0$ follows by Lemma~\ref{lem:press less zero}.  $P^G(\Psi_{t})\ge 0$ follows since $\mu_M$ is compatible with our scheme.
\end{proof}

Since by construction, $\mu_M$ is compatible with $(X,F)$, the induced measure being denoted by $\mu_{F,M}$, and since $h(\mu_M)+\int\psi_t~d\mu_M=0$, we have $$h(\mu_{F,M})+\int\Psi_t~\mu_{F,M}=0,$$
and so $\mu_{F,M}$ is an equilibrium state for $\Psi_t$.  However, by Theorems 1.1 and 1.2 of \cite{BuSar} any equilibrium state of $\Psi_t$ must have positive entropy, a contradiction.
\end{proof}

\begin{proof}[Proof of Theorem~\ref{thm:eq_exist_unique}]
The existence of the equilibrium state for $-t\log|Df|$ and $t\in (t^-, t^+)$ follows from Proposition~\ref{prop:conv to eq}.  Uniqueness follows from Proposition~\ref{prop:unique}.  Positivity of the entropy of $\mu_t$ comes from Lemma~\ref{lem:pos ent}.  Finally the fact that $t^-=-\infty$ comes from Proposition~\ref{prop:t^-}.
\end{proof}

\section{The pressure is of class $C^1$ and strictly convex in $(-\infty,  t^+)$} \label{sec:smooth and convex}

As discussed in the introduction, for general systems the pressure function $t \mapsto p(t)$ is convex, therefore it can have at most a countable number of first order phase transitions.
In \cite{Saran} an example is constructed with the property that the set of parameters at which the pressure function is not analytic has positive measure (in this case, there also exist \emph{higher order phase transitions}, see \cite{Sarcrit}). Nevertheless, for multimodal maps it has been shown that  in certain intervals the pressure function is indeed real analytic, see \cite{BTeqgen,BTeqnat}. Dobbs \cite[Proposition 9]{Dobphase} proved that in the quadratic family $x\mapsto \gamma x(1-x)$, $\gamma \in (3,4)$  there exists uncountably many parameters for which the pressure function admits infinitely many phase transitions.  However, these transitions are caused by the existence of an infinite sequence renormalisations of the map, so for these parameters the corresponding quadratic maps do not have a representative in the class $\F$.  He also notes \cite[Proposition 4]{Dobphase} that in the quadratic family there is a always a phase transition for negative $t$ caused by the repelling fixed point at 0.  Since, this fixed point is not in the transitive part of the system (which actually must be contained in $[f^2(c), f(c)]$), from our perspective this point is not dynamically relevant, so any representative of such a map in $\F$ would miss this part of the dynamics, and hence not exhibit this transition.

\begin{prop} \label{prop:regular}
Let $f \in \F$, the pressure function $p$ is $C^1$ in the interval $(-\infty, t^+)$.
\end{prop}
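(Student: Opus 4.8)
The plan is to reduce Proposition~\ref{prop:regular} to differentiability of $p$ at each interior point and then to exclude a first order phase transition by exploiting the uniqueness of equilibrium states. Since $p$ is continuous and convex (\cite[Theorem 9.7]{Waltbook}), and a convex function that is differentiable at every point of an open interval automatically has continuous derivative there (its derivative is monotone and has the intermediate value property), it suffices to prove that $D^-p(t_0)=D^+p(t_0)$ for every $t_0\in(-\infty,t^+)$, where $D^\pm p$ denote the one-sided derivatives.

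First I would record the elementary observation that, for each $t\in(-\infty,t^+)$, the unique equilibrium state $\mu_t$ for $-t\log|Df|$ (which exists by Theorem~\ref{thm:eq_exist_unique}) has $-\lambda(\mu_t)$ in the subdifferential of $p$ at $t$. Indeed, combining $p(t)=h(\mu_t)-t\lambda(\mu_t)$ with $p(s)\ge h(\mu_t)-s\lambda(\mu_t)$ from the Variational Principle gives $p(s)\ge p(t)-\lambda(\mu_t)(s-t)$ for all $s$, so $s\mapsto p(t)-\lambda(\mu_t)(s-t)$ is a supporting line for $p$ at $t$; hence $D^-p(t)\le-\lambda(\mu_t)\le D^+p(t)$. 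Letting $t\uparrow t_0$ and using the standard convexity facts that both $D^-p(t)$ and $D^+p(t)$ tend to $D^-p(t_0)$, I obtain $\lambda(\mu_t)\to -D^-p(t_0)$, and symmetrically $\lambda(\mu_t)\to -D^+p(t_0)$ as $t\downarrow t_0$. (Here $\lambda(\mu_t)$ is bounded, $0\le\lambda_m\le\lambda(\mu_t)\le\lambda_M<\infty$ by Remark~\ref{rmk:prz} and $f\in C^{1+}$ on a compact interval; one can also check $\lambda(\mu_t)$ is non-increasing in $t$, though this is not needed.)

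Next, suppose for contradiction that $p$ has a first order phase transition at some $t_0\in(-\infty,t^+)$, i.e.\ $D^-p(t_0)<D^+p(t_0)$. Pick $t_n\uparrow t_0$; then continuity of $p$ and boundedness of $\lambda(\mu_{t_n})$ give $h(\mu_{t_n})+\int\psi_{t_0}\,d\mu_{t_n}=p(t_n)+(t_n-t_0)\lambda(\mu_{t_n})-p(t_0)\to 0$, so by Lemma~\ref{lem:pos ent} there is $\eta>0$ with $h(\mu_{t_n})>\eta$ for all large $n$. By Lemma~\ref{lem:bdd ind int} and the pigeonhole principle I may pass to a subsequence along which all $\mu_{t_n}$ are compatible with a single inducing scheme, with a uniform bound $\int\tau\,d\mu_{F,n}<\theta$. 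Running the argument of the proof of Proposition~\ref{prop:conv to eq} with $\psi=\psi_{t_0}$ on this scheme, a further subsequence of $(\mu_{t_n})$ converges (weak$^*$) to an equilibrium state $\mu^-$ for $-t_0\log|Df|$, and — using the uniform tail estimate $\mu_{F,n}\{\tau\ge j\}\le\theta/j$ to pass the Abramov formula to the limit — $\lambda(\mu^-)=\lim_n\lambda(\mu_{t_n})=-D^-p(t_0)$. Repeating with a sequence $t_n\downarrow t_0$ produces an equilibrium state $\mu^+$ for $-t_0\log|Df|$ with $\lambda(\mu^+)=-D^+p(t_0)$. Since $D^-p(t_0)\ne D^+p(t_0)$, the measures $\mu^-$ and $\mu^+$ are distinct equilibrium states for $-t_0\log|Df|$, contradicting Proposition~\ref{prop:unique}. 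Hence $D^-p(t_0)=D^+p(t_0)$ for every $t_0\in(-\infty,t^+)$, so $p$ is $C^1$ there.

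The step I expect to be the main obstacle is the last part of the construction of $\mu^\pm$: showing that the Lyapunov exponent and the inducing time pass to the weak$^*$ limit, so that $\lambda(\mu^\pm)=\lim_n\lambda(\mu_{t_n})$. This is precisely the tightness/uniform-integrability difficulty dealt with in the proof of Proposition~\ref{prop:conv to eq}, and it is why one first needs a \emph{single} inducing scheme compatible with a whole tail of the sequence (Lemma~\ref{lem:bdd ind int}) together with uniformly positive entropy along the sequence (Lemma~\ref{lem:pos ent}), which yield the uniform bound on $\int\tau\,d\mu_{F,n}$ and hence the required control on escape of mass. I do not expect this argument to give more than $C^1$; real-analyticity throughout $(-\infty,t^+)$ would require spectral-gap information for the transfer operators of the induced systems, which is more delicate, particularly near $t^+$.
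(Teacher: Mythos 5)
Your overall strategy is genuinely different from the paper's. You argue by contradiction: a kink at $t_0$ would, via Lemmas~\ref{lem:pos ent} and \ref{lem:bdd ind int} and the limit argument of Proposition~\ref{prop:conv to eq}, produce two distinct equilibrium states for $\psi_{t_0}$, contradicting Proposition~\ref{prop:unique}. The paper instead proves $C^1$ directly: Theorem~\ref{thm:schemes}(b) supplies a single inducing scheme $(X,F,\tau)$ compatible with $\mu_t$ for \emph{every} $t\in(-\infty,t^+)$, and one then appeals to continuity of $t'\mapsto\int\log|DF|\,d\mu_{\Psi_{t'}}$ and $t'\mapsto\int\tau\,d\mu_{\Psi_{t'}}$ (a property of the family of Gibbs measures on a fixed countable full shift) to conclude that $t\mapsto\lambda(\mu_t)=-Dp(t)$ is continuous. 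The final displayed inequality in your proposal and in the paper's proof is the same standard convexity computation.

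However, there is a real gap at exactly the point you flag. The crucial step is passing $\lambda(\mu_{t_n})\to\lambda(\mu^-)$ to the weak$^*$ limit, and you claim this follows from the uniform tail bound $\mu_{F,n}\{\tau\ge j\}\le\theta/j$. That bound is \emph{not} strong enough: an $O(1/j)$ tail bound does not give uniform integrability of $\tau$. Concretely, measures $\nu_n:=\frac1n\delta_n+\bigl(1-\frac1n\bigr)\delta_0$ on $\{0,1,2,\ldots\}$ satisfy $\nu_n\{x\ge j\}\le 1/j$, converge weak$^*$ to $\delta_0$, yet $\int x\,d\nu_n=1\not\to 0=\int x\,d\delta_0$. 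In the paper's Proposition~\ref{prop:conv to eq} the tail bound is used only with Fatou/Monotone Convergence to show $\int\tau\,d\mu_\Psi\le\theta k<\infty$ (so that $\mu_\Psi$ projects); it does \emph{not} establish $\int\tau\,d\mu_{F,n}\to\int\tau\,d\mu_\Psi$, and it certainly does not control $\int\log|DF|\,d\mu_{F,n}$. Consequently, the identity $\lambda(\mu^\pm)=\mp D^\pm p(t_0)$ — the heart of your contradiction, since $\mu^\pm$ are both forced by Proposition~\ref{prop:unique} to equal $\mu_{t_0}$ — is not proved. To close the gap you would need the finer continuity of the induced Gibbs measures in the parameter $t$ (using that $\mu_{F,n}$ are Gibbs for the weakly H\"older family $\Psi_{t_n}$ on a \emph{fixed} full shift, via Mauldin--Urba\'nski/Sarig), which is precisely the ingredient the paper's proof invokes; at that point your contradiction argument becomes an unnecessary detour around the paper's direct computation.
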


\begin{proof}
We first show that $p$ is differentiable.  By Theorem~\ref{thm:schemes}, we can choose an inducing scheme $(X,F,\tau)$ which is compatible with $\mu_t$ for each $t\in (-\infty, t^+)$.  Then we have the limits
$$\lim_{t'\to t}\int\log|DF|~d\mu_{\Psi_{t'}}= \int\log|DF|~d\mu_{\Psi_{t}} \text{ and } \lim_{t'\to t}\int\tau~d\mu_{\Psi_{t'}}= \int\tau~d\mu_{\Psi_{t}}.$$  We emphasise that these limits are the same if $t'$ are taken to the left or to the right of $t$.  Hence $\lambda(\mu_{\psi_{t}})$ is continuous in $(t^-, t^+)$.  Since the derivative of $p$ is $-\lambda(\mu_{\psi_{t}})$, %(could look at both sides here if we liked)
the derivative is continuous,  proving the lemma.
This standard fact can be seen as follows (see also, for example, \cite[Theorem 4.3.5]{Kellbook}): given $\eps>0$, by the definition of pressure the free energy of $\mu_t$ with respect to $\psi_{t+\eps}$ is no more than $p(t+\eps)$.  Similarly the free energy of $\mu_{t+\eps}$ with respect to $\psi_{t}$ is no more than $p(t)$.  Hence
$$\frac{(-(t+\eps)+t)\lambda(\mu_{t+\eps})}\eps \ge \frac{p(t+\eps)-p(t)}\eps \ge \frac{(-(t+\eps)+t)\lambda(\mu_{t})}\eps.$$
So whenever $t\mapsto \lambda(\mu_{t})$ is continuous, $Dp(t) = -\lambda(\mu_{t})$.
%Since the pressure function is convex, we have that its derivative is monotone.  It is a direct consequence of Darboux's Theorem that the pressure function is of class $C^1$.
\end{proof}

\begin{prop}
\label{prop:strictconv}
For $f\in \F$, $t^+>0$ and the pressure function $p$ is strictly convex in $(-\infty, t^+)$.
\end{prop}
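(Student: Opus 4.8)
The plan is to split the statement into two parts: first $t^+>0$, then strict convexity of $p$ on $(-\infty,t^+)$.

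For $t^+>0$: recall $t^+=\sup\{t:p(t)>-\lambda_m t\}$. By Remark~\ref{rmk:prz} we have $\lambda_m\ge 0$. It suffices to produce a measure $\mu$ with $h(\mu)>0$, since then $p(0)\ge h(\mu)>0=-\lambda_m\cdot 0$, and by continuity of $p$ (and of the linear map $t\mapsto -\lambda_m t$) the strict inequality $p(t)>-\lambda_m t$ persists on a right-neighbourhood of $0$, forcing $t^+>0$. To get such a $\mu$: our maps $f\in\F$ are topologically transitive multimodal maps with no attracting cycles or homtervals, so $h_{top}(f)>0$; by the variational principle there are invariant measures of arbitrarily large entropy, in particular one with $h(\mu)>0$. (Alternatively, one may quote that the measure of maximal entropy has positive entropy.)

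For strict convexity: suppose, for contradiction, that $p$ is affine on some nondegenerate interval $[a,b]\subset(-\infty,t^+)$. By Proposition~\ref{prop:regular}, $Dp(t)=-\lambda(\mu_t)$ where $\mu_t$ is the unique equilibrium state for $-t\log|Df|$ from Proposition~\ref{prop:conv to eq}; affineness of $p$ on $[a,b]$ forces $\lambda(\mu_t)\equiv\lambda_0$ constant for $t\in[a,b]$. Writing $p(t)=h(\mu_t)-t\lambda(\mu_t)$, constancy of $\lambda(\mu_t)$ together with affineness of $p$ forces $h(\mu_t)\equiv h_0$ constant on $[a,b]$ as well. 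Now both $\mu_a$ and $\mu_b$ realise the value $h_0-t\lambda_0$ of the free energy of $-t\log|Df|$ for \emph{every} $t\in[a,b]$, hence both are equilibrium states for $-t\log|Df|$ for each such $t$; by the uniqueness in Proposition~\ref{prop:unique}, $\mu_a=\mu_b=:\mu$, and more generally $\mu_t=\mu$ for all $t\in[a,b]$. By Theorem~\ref{thm:schemes} choose an inducing scheme $(X,F)$ compatible with $\mu$, and let $\mu_F$ be the lift; by Lemma~\ref{lem:press zero}, $\mu_F$ is the Gibbs (hence unique equilibrium) measure for $\Psi_t=-t\log|DF|-p(t)\tau$ on $(X,F)$, for every $t\in[a,b]$.

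This is where the main work lies: deriving a contradiction from a single measure being the Gibbs state for a genuine one-parameter family of potentials. The induced potentials $\Psi_a$ and $\Psi_b$ differ: $\Psi_a-\Psi_b=(b-a)\log|DF|-(p(a)-p(b))\tau$, which is not cohomologous to a constant on $(X,F)$ unless $\log|DF|$ is cohomologous to a multiple of $\tau$ plus a constant — and since $F$ is conjugate to a full shift on countably many symbols (Remark~\ref{rmk:conj to shift}), a Gibbs measure determines its potential up to a cohomologous constant, so $\mu_F$ being Gibbs for both $\Psi_a$ and $\Psi_b$ forces exactly such a cohomology relation. Unwinding this on the original system, $\log|Df|$ would be cohomologous (via a not-necessarily-continuous coboundary, but constant on periodic orbits) to a constant $c$ on the support of $\mu$; evaluating on periodic orbits gives $\tfrac1n\log|Df^n(x)|=c$ for every period-$n$ point $x$ in the support, i.e. all Lyapunov exponents of periodic orbits equal $c=\lambda_0$. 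One then contradicts this: either the critical point's orbit is dense in the transitive set (so one can find periodic orbits passing arbitrarily close to $\Crit$, whose Lyapunov exponents are then forced to be small, giving a value $\neq\lambda_0$ once $a<b$ are chosen with the interval nondegenerate), or, more robustly, one invokes that a $C^{1+}$ map with negative Schwarzian and critical points cannot have all periodic orbits sharing one Lyapunov exponent unless it is (essentially) linearisable, which is incompatible with having critical points of finite order $>1$. I expect the cleanest route is the cohomology/periodic-orbit argument together with the existence of periodic orbits shadowing the critical orbit; making the "constant on periodic orbits" step rigorous for the possibly-discontinuous coboundary coming from the countable Markov shift is the delicate point, and I would handle it by working entirely on $(X,F)$, where $\Psi_a,\Psi_b$ have summable variations and the Livšic-type rigidity for countable Markov shifts applies directly, only transferring the final numerical contradiction back to $f$.
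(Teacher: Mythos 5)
Your argument for $t^+>0$ is correct and actually simpler than the paper's: you observe $p(0)=h_{top}(f)>0=-\lambda_m\cdot 0$ and use continuity of $p$ to get a right-neighbourhood of $0$ inside $\{t:p(t)>-\lambda_m t\}$. The paper instead proves a stronger statement, Lemma~\ref{lem:not affine}, that $\lambda(\mu_0)>\lambda_m$ for the measure of maximal entropy $\mu_0$; that lemma involves Ruelle's inequality, Ledrappier's result and \cite[Proposition 3.1]{Dobbsvis} to rule out finite postcritical sets. Your route, while it does need the (true but unstated) fact that transitive multimodal maps in $\F$ have $h_{top}(f)>0$, gets the needed conclusion with much less machinery.

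For strict convexity, your overall strategy---same equilibrium state on an affine interval, lift to a fixed inducing scheme, get a Gibbs measure simultaneously for the two induced potentials $\Psi_a,\Psi_b$, and extract a contradiction---matches the paper's, but you detour through Liv\v sic-type rigidity on the countable full shift to conclude $\Psi_a-\Psi_b$ is cohomologous to a constant. The paper bypasses this entirely: it uses the Gibbs property at two parameters $t_1,t_2$ directly to pin down $|X_i|\asymp e^{-\tau_i\beta}$ for \emph{every} domain, with $\beta$ the slope of the affine piece, and then contradicts this with Lemma~\ref{lem:good schemes}, which produces (using the existence of periodic orbits with Lyapunov exponent arbitrarily close to $\lambda_m$, cf.\ \cite[Lemma 19]{Dobphase}) a sequence of domains with $|X_{i_k}|\ge e^{-(\lambda_m+\eps)\tau_{i_k}}$. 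In other words, the Gibbs estimate already carries the periodic-orbit information you wanted to extract by cohomology, and in a more uniform and quantitative form, so you need neither a Liv\v sic theorem for countable Markov shifts nor a linearisation rigidity result.

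There are a few genuine gaps. First, you would need an explicit reference for Liv\v sic/Gibbs-determines-potential rigidity in the countable-alphabet, summable-variations setting with finite Gurevich pressure; this is not automatic and the paper's direct Gibbs-property comparison sidesteps the issue. Second, your end-game contradiction is hedged between two routes, and neither is stated correctly as written: a periodic orbit passing near $\Crit$ does \emph{not} have to have a small Lyapunov exponent (the rest of the orbit can compensate), and the claimed ``all periodic exponents equal implies essentially linearisable'' rigidity for $C^{1+}$ multimodal maps is not something you can cite or easily prove. The correct statement you want is that there exist periodic orbits with Lyapunov exponent arbitrarily close to $\lambda_m$, together with a choice of inducing scheme whose domains shadow such an orbit---this is exactly what the paper isolates as Lemma~\ref{lem:good schemes}. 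Third, you have not addressed the case where the putative affine piece has slope exactly $-\lambda_m$ (which can happen inside $(-\infty,t^+)$ only if $t^+=\infty$); the argument via domains of size $\ge e^{-(\lambda_m+\eps)\tau}$ gives nothing there, and the paper needs a separate argument (comparing to $\lambda_M$, and the identity $t^*=t^+$) to close this case.
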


%Note that in particular this means that $t^-=\infty$ and $t^+=t_0$.

Before proving this proposition, we need two lemmas: the first guarantees that $t^+>0$, while the second will be used to obtain strict convexity of the pressure function (both these facts are in contrast with the quadratic Chebyshev case).

\begin{lema}
For $f\in \F$, $\lambda(\mu_0)>\lambda_m$ where $\mu_0$ is the measure of maximal entropy for $f$.
\label{lem:not affine}
\end{lema}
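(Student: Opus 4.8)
The plan is to argue by contradiction, assuming $\lambda(\mu_0)=\lambda_m$, where $\mu_0$ is the measure of maximal entropy; recall $\mu_0$ exists and is unique because $f\in\F$ is topologically transitive with positive topological entropy (the entropy is $\log d$ for the appropriate ``degree'' $d$, coming from the branch partition $\P_1^f$), and by Remark~\ref{rmk:prz} we have $\lambda_m\ge 0$. Since $h(\mu_0)=h_{top}(f)>0$ and $p(0)=h_{top}(f)$, the free energy of $\mu_0$ with respect to $-t\log|Df|$ is $h_{top}(f)-t\lambda_m$ for the potential at parameter $t$. First I would observe that, since $p(t)\ge h(\mu_0)-t\lambda(\mu)=h_{top}(f)-t\lambda_m$ for all $t$ and $p(t)\ge -\lambda_m t$ trivially (take any measure close to the infimum of Lyapunov exponents, using that the supremum defining $p$ dominates $0+\int(-t\log|Df|)d\mu$), the assumption $\lambda(\mu_0)=\lambda_m$ forces the supporting line $t\mapsto h_{top}(f)-t\lambda_m$ to touch the graph of $p$ and to lie on or above the line $t\mapsto -\lambda_m t$. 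One then checks that this pins down the behaviour of $p$ for large positive $t$: for all $t\ge 0$ we would have $p(t)\ge h_{top}(f)-\lambda_m t > -\lambda_m t$, so by the definition of $t^+$ in \eqref{eq:t plus minus}, $t^+=\infty$.

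Next I would derive the key consequence of $t^+=\infty$. By Proposition~\ref{prop:unique} (the ``moreover'' clause, which applies as soon as $t^+>1$), for every $t\in\R$ there is a unique equilibrium state $\mu_t$ for $-t\log|Df|$, and by the argument in the proof of that proposition $\lambda_m>0$ and $\lambda(\mu)>0$ for every $\mu\in\M_f$; in particular $t\mapsto p(t)$ is real-analytic on all of $\R$ via the inducing-scheme machinery of Section~\ref{sec:smooth and convex} (or at least $C^1$ by Proposition~\ref{prop:regular}), with $Dp(t)=-\lambda(\mu_t)$. Now I would use the equality case in the variational principle: the line $\ell(t):=h_{top}(f)-\lambda_m t$ is a supporting line for $p$ that is attained at $t=0$ by $\mu_0$ (since $\lambda(\mu_0)=\lambda_m$ and $h(\mu_0)=h_{top}(f)=p(0)$), so $Dp(0)=-\lambda_m=-\lambda(\mu_0)$, consistent; but convexity of $p$ together with $p(t)\ge\ell(t)$ everywhere and $p(0)=\ell(0)$ means $p$ and $\ell$ agree to first order at $0$ and $p$ stays above its tangent line. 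The real leverage comes from feeding $\mu_0$ into the variational principle at large $t$: since $\mu_0$ has the \emph{largest} entropy among all invariant measures and $\lambda_m$ is the \emph{smallest} Lyapunov exponent, $\mu_0$ simultaneously maximises $h(\mu)$ and minimises $\lambda(\mu)$, hence for every $t\ge 0$ it maximises $h(\mu)-t\lambda(\mu)$, i.e.\ $\mu_0$ is the equilibrium state $\mu_t$ for \emph{all} $t\ge 0$. Then $p(t)=h_{top}(f)-\lambda_m t$ is affine on $[0,\infty)$, so $p$ has no strict convexity there — but this contradicts Proposition~\ref{prop:strictconv}, whose proof (the second lemma feeding into it) is where strict convexity is actually established, so I must be careful not to invoke it circularly.

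To avoid the circularity, instead of appealing to Proposition~\ref{prop:strictconv} I would extract the contradiction directly. If $\mu_0=\mu_t$ for all $t\ge 0$, then in particular $\mu_0$ is an acip-type equilibrium state at $t=1$ with $p(1)=h_{top}(f)-\lambda_m$; but independently, Bowen's/Ruelle's inequality forces $p(1)\le 0$ and, more to the point, any equilibrium state for $t=1$ of positive Lyapunov exponent with $h(\mu)=\lambda(\mu)$ would be an acip by Ledrappier's theorem \cite{Led}. The genuinely exploitable contradiction is dimension-theoretic / distortion-theoretic: a measure of maximal entropy is supported ``uniformly'' on the combinatorial cylinders $\P_n^f$ (its cylinder measures are $\asymp d^{-n}$), whereas an equilibrium state for $-t\log|Df|$ with $t$ large is highly concentrated near the orbit(s) realising $\lambda_m$; these two descriptions are incompatible unless $f$ is conjugate to a piecewise-linear (Chebyshev-like) map, which is excluded for $f\in\F$ because non-flat critical points of finite order $\ell_c>1$ prevent $|Df|$ from being cohomologous to a constant. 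Concretely: if $\mu_0=\mu_t$ for all $t$, then $h_{top}(f)-t\lambda_m=p(t)\ge h(\mu)-t\lambda(\mu)$ for every $\mu$, applied to a sequence of measures $\nu_k$ equidistributing on periodic orbits with $\lambda(\nu_k)\to\lambda_M>\lambda_m$ (and $h(\nu_k)\to 0$), gives no contradiction by itself — so the cleanest route really is: $\mu_0=\mu_t$ would make $\log|Df|$ $\mu_0$-a.e.\ cohomologous to the constant $\lambda_m$ (this follows from strict convexity failing on an interval, which forces the potential to be cohomologous to a constant along the support, cf.\ the standard characterisation of affine pressure), and then the transfer-operator/distortion bounds for $f\in\F$ (Koebe lemma, Remark~\ref{rmk:c2 vs c1}) would force all branches of $f$ to be affine, contradicting the presence of a non-flat critical point. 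The hard part will be making this last step — ``affine pressure $\Rightarrow$ $\log|Df|$ cohomologous to a constant $\Rightarrow$ $f$ piecewise affine'' — precise and self-contained using only the tools available in the excerpt, rather than citing it as folklore; I expect the authors handle it by a more hands-on argument comparing $\mu_0$ with a well-chosen perturbation (e.g.\ inserting a periodic orbit of a different Lyapunov exponent into the ergodic decomposition, as in the proof of Proposition~\ref{prop:unique}) to show $h(\mu_0)-t\lambda(\mu_0)$ is \emph{strictly} beaten for some $t$ and some $\mu$, which is exactly the contradiction sought.
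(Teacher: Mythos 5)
Your plan matches the paper's opening moves exactly: assume $\lambda(\mu_0)=\lambda_m$, observe that $\mu_0$ simultaneously maximizes entropy and minimizes the Lyapunov exponent, conclude that $\mu_0$ is an equilibrium state for $-t\log|Df|$ for every $t\ge 0$ and that $p$ is affine on $[0,\infty)$ with slope $-\lambda_m$. You are also right to be wary of invoking Proposition~\ref{prop:strictconv} here, since that strict-convexity statement depends on this very lemma.

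The gap is in how you close the contradiction. You briefly note that $\mu_0$, as an equilibrium state at $t=1$ with positive Lyapunov exponent, would be an acip by Ledrappier's theorem \cite{Led} --- and then set that observation aside as not ``genuinely exploitable,'' speculating instead about cohomology of $\log|Df|$ to a constant, piecewise-affine rigidity, or ad hoc perturbation arguments, none of which you manage to make precise and which you yourself flag as the ``hard part.'' But the acip observation is precisely what the paper uses, and it closes the argument in one further step: by \cite[Proposition 3.1]{Dobbsvis}, a measure that is simultaneously an acip and the measure of maximal entropy forces $f$ to have a finite postcritical set, which contradicts condition (d) of the class $\F$. (Ruelle's inequality \cite{Ruelleineq} gives $\lambda(\mu_0)\ge h(\mu_0)=h_{top}(f)>0$, so Ledrappier does apply.) You were one citation away; the cohomological-rigidity route you sketched instead would require a separate Liv\v{s}ic-type lemma that the paper does not develop and that is genuinely harder than the result being proved.
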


\begin{proof}
The existence of a (unique) measure of maximal entropy $\mu_0$ is guaranteed by \cite{Hofpw}.  Suppose for a contradiction that the lemma is false and hence $\lambda(\mu_0)=\lambda_m$.  Since when the derivative of $p$ exists at a point $t$, it is equal to $-\lambda(\mu_t)$ (see \cite{Ruellebook} as well as the computation in the proof of Proposition~\ref{prop:regular}) and by convexity, the pressure function must be affine with constant slope $-\lambda_m$. i.e. $p(t)=h_{top}(f)-t\lambda_m$ for $t\in [0, \infty)$.
This implies that $\mu_0$ must be an equilibrium state for the potential $-t\log|Df|$ for every $t\in \R$.  In particular this applies when $t=1$.  Moreover, by Ruelle's inequality \cite{Ruelleineq}, we have $\lambda(\mu_0)>0$, so $\mu_0$ must be an acip by \cite{Led}.  By \cite[Proposition 3.1]{Dobbsvis}, this implies that $f$ has finite postcritical set, which is a contradiction.
\iffalse Moreover, since by Ruelle's inequality \cite{Ruelleineq}, $\lambda_m=\lambda(\mu_0)>0$, the pressure function $p$ is strictly decreasing.  This implies that $\mu_0$ must be an equilibrium state for the potential $-t\log|Df|$ for every $t\in \R$.  In particular this applies when $t=1$.  By \cite{Led} $\mu_0$ must be an acip.  By \cite[Proposition 3.1]{Dobbsvis}, this implies that $f$ has finite postcritical set, which is a contradiction.\fi
\end{proof}

\begin{lema}
For any $\eps>0$ there exists an inducing scheme $(X,F)$ a sequence $i_k \to \infty$ such that the domains $X_{i_k}$ have
$$|X_{i_k}| \ge e^{-(\lambda_m+\eps)\tau_{i_k}}.$$
\label{lem:good schemes}
\end{lema}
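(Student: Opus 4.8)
The plan is to build the inducing scheme by hand (rather than extract one from Theorem~\ref{thm:schemes}), choosing its domains $X_{i_k}$ so as to shadow longer and longer pieces of an orbit along which the Birkhoff averages of $\log|Df|$ stay close to $\lambda_m$; bounded distortion then forces $|X_{i_k}|$ to be comparable to $e^{-(\text{average})\tau_{i_k}}$, which is exactly the bound asked for.

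First I would fix $\eps>0$ and, using the definition of $\lambda_m$ together with the ergodic decomposition, pick an ergodic $\mu\in\M_f$ with $\lambda(\mu)<\lambda_m+\eps/2$. By Birkhoff's theorem, for $\mu$-a.e.\ $x$ we have $\frac1N\sum_{j=0}^{N-1}\log|Df(f^j(x))|\to\lambda(\mu)$; I fix such an $x$ and an $N_0$ with $\sum_{j=0}^{N-1}\log|Df(f^j(x))|\le(\lambda_m+3\eps/4)N$ for all $N\ge N_0$. Note that no positivity of $\lambda(\mu)$ is used, so the case $\lambda_m=0$ presents no extra difficulty beyond knowing that $\F$ admits some ergodic measure of exponent $<\eps/2$.

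Next I would invoke the construction of inducing schemes of \cite{BTeqnat} via the Hofbauer extension, with the Koebe distortion estimates of \cite[Section IV]{MSbook} (applicable for $C^{1+}$ maps with negative Schwarzian, cf.\ Remark~\ref{rmk:c2 vs c1}), arranged so as to produce an inducing scheme $(X,F,\tau)$ for $f$ with a distinguished family of domains: for each $N\ge N_0$ a domain $X_{i_N}$ with $F|_{X_{i_N}}=f^{\tau_{i_N}}$, $\tau_{i_N}=N+r_N$ and $0\le r_N\le C$ ($C$ independent of $N$), such that every point of $X_{i_N}$ stays $\delta$-close to $x,f(x),\dots,f^{N-1}(x)$ for its first $N$ steps before $F$ closes it up in $r_N$ more steps. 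The construction comes with a distortion constant $K$ valid on all iterates and independent of $N$, and ensures that $\log|DF|$ on $X_{i_N}$ differs from $\sum_{j=0}^{N-1}\log|Df(f^j(x))|$ by at most a constant $C'$ independent of $N$. Then, for $\xi\in X_{i_N}$, from $|X|=|F(X_{i_N})|\le K|DF(\xi)|\,|X_{i_N}|$ one gets
\[
|X_{i_N}|\ \ge\ \frac{|X|\,e^{-C'}}{K}\,\exp\Bigl(-\textstyle\sum_{j=0}^{N-1}\log|Df(f^j(x))|\Bigr)\ \ge\ \frac{|X|\,e^{-C'}}{K}\,e^{-(\lambda_m+3\eps/4)N},
\]
which exceeds $e^{-(\lambda_m+\eps)\tau_{i_N}}\le e^{-(\lambda_m+\eps)N}$ as soon as $e^{\eps N/4}\ge Ke^{C'}/|X|$; since $\tau_{i_N}\ge N\to\infty$, relabelling these domains yields the required sequence $X_{i_k}$.

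I expect the one genuinely non-trivial point to be the middle step: one must quote, rather than reprove, that the inducing schemes of \cite{BTeqnat} can be set up to contain domains that shadow a prescribed orbit segment and then close it up in boundedly many steps, with the distortion of $F$ along such a domain controlled uniformly even when the shadowed segment comes close to $\crit$ — this being exactly where the $C^{1+}$ and negative Schwarzian hypotheses are needed (through Koebe). Everything else is the soft computation above, together with the elementary observation that if $\lambda(\mu)$ is close to $\lambda_m$ then $\mu$-generic orbit segments automatically realise averages close to $\lambda_m$.
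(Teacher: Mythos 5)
Your proposal diverges from the paper's proof at the key point, and the divergence introduces a genuine gap. The paper does not shadow a Birkhoff-generic orbit of an ergodic measure $\mu$ with small exponent; it shadows the orbit of a \emph{periodic} point $p$ of small Lyapunov exponent, whose existence it quotes as a standard fact (cf.\ \cite[Lemma 19]{Dobphase}). The reason the periodic choice matters is exactly the step you flag as ``the one genuinely non-trivial point'' but then take on faith: that the shadowing domain can be ``closed up in boundedly many steps'' with $r_N\le C$ uniformly in $N$. For a periodic orbit this is provable: one picks $(X,F)$ with $X$ disjoint from $\mathrm{orb}(p)$, fixes once and for all a bounded number $m_1$ of steps from $X$ to a neighbourhood of $\mathrm{orb}(p)$ and a bounded number $m_2$ of steps from that neighbourhood back to $X$, and then for each $n$ transitivity supplies a domain $X_{i_n}$ with $\tau_{i_n}=m_1+n+m_2$ whose first $m_1$ and last $m_2$ steps contribute a derivative factor bounded independently of $n$, the middle $n$ steps contributing $\approx e^{\lambda(p)n}$. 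For a $\mu$-generic $x$ this uniformity fails: after shadowing $x,f(x),\dots,f^{N-1}(x)$ one is near $f^{N}(x)$, a \emph{different} point for each $N$, and there is no uniform bound on the time (or derivative cost) to return to $X$ from there. So the claim $\tau_{i_N}=N+r_N$ with bounded $r_N$ and uniformly controlled extra distortion is precisely the piece that is false in general, and it is not something one can ``quote rather than reprove'' from \cite{BTeqnat}.

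Two further issues. First, your remark that ``no positivity of $\lambda(\mu)$ is used'' is misleading: to make a $\mu$-generic orbit interact with the inducing schemes of \cite{BTeqnat} (i.e.\ to have $\mu$ compatible with $(X,F)$, hence to lift it to the Hofbauer tower and have generic orbits of $x$ pass through $X$ with positive frequency), one needs $\lambda(\mu)>0$; if $\lambda_m=0$ the chosen $\mu$ could have zero exponent and then none of the tower/inducing machinery applies to it. Periodic measures sidestep this entirely. Second, the domains you propose are essentially nested: the set of points shadowing $x,f(x),\dots,f^{N}(x)$ is contained in the set shadowing $x,\dots,f^{N-1}(x)$, yet distinct domains of a single inducing scheme are disjoint, so for a first-return scheme the sequence of domains cannot grow with $N$ once $N$ exceeds the first return time of $x$ to $X$. (The periodic-orbit version avoids this because the domains approach $\mathrm{orb}(p)$ through possibly different cylinders, and in any case are determined by the return structure of the tower, not by nesting around a single orbit.) The computation at the end of your proof is fine if the middle step were available, but the middle step as you have stated it is the whole difficulty and, for a non-periodic generic orbit, is not true.

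Replacing the ergodic measure by a periodic orbit of nearly minimal Lyapunov exponent, and the Birkhoff argument by the transitivity/shadowing argument, gives the paper's proof and resolves all of the above.
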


\begin{proof}
It is standard to show that for any $\eps>0$, there exists a periodic point $p$ with Lyapunov exponent $\le \lambda_m +\eps/3$, see for example \cite[Lemma 19]{Dobphase}.  We can choose $(X,F)$ as in Theorem~\ref{thm:schemes} so that the orbit of $p$ is disjoint from $X$.  We may further assume that $(X,F)$ has distortion bounded by $e^{\delta}$ for some $\delta>0$, i.e. $$\frac{|DF(x)|}{|DF(y)|} \le e^{\delta}$$ for all $x, y\in X_i$ for any $i\in \N$.  In this case, by the transitivity of $(I,f)$, which is reflected in our inducing scheme, there must exist an infinite sequence of domains $X_{n_k}$ of $(X,F)$ which shadow the orbit of $p$ for longer and longer.  One can use standard distortion arguments to prove that for all large $k$, $|X_i| \ge |X| e^{-\delta} e^{-(\lambda_m+\eps/2)\tau_i}$.  Choosing $\delta>0$ appropriately completes the proof of the lemma.
\end{proof}

\begin{proof}[Proof of Proposition~\ref{prop:strictconv}]
For the first part of the proposition, $t^+>0$ is guaranteed by Lemma~\ref{lem:not affine}.

For the second part of the proposition, since $p$ is convex, we only have to rule out $p$ being affine in some interval.
Suppose first that $p$ is affine in an interval $[t_1, t_2] \subset (-\infty, t^*)$ where $t^*:=\inf\{t:Dp(t) = -\lambda_m\}$. I.e. for some $\beta>\lambda_m$, $t\in [t_1, t_2]$ implies $p(t) = p(-t_1)-(t-t_1)\beta$.  We let $\eps>0$ be such that $\beta>\lambda_m+\eps$.
By Lemma~\ref{lem:good schemes}, there exists $i_k\to \infty$ such that
$$|X_{i_k}| \ge e^{-(\lambda_m+\eps)\tau_{i_k}}.$$

The fact that the pressure function is affine in $[t_1, t_2]$ implies that the equilibrium state is the same $\mu$ for every $t\in [t_1, t_2]$.  Denote the induced version of $\mu$ by $\mu_F$.
By the Gibbs property of our inducing schemes, $\mu_F(X_i) \asymp |X_i|^te^{-\tau_ip(t)}$ for all $t\in [t_1, t_2]$.  Therefore, $$\frac{|X_i|^{t_1} e^{-\tau_ip(t_1)}}{|X_i|^t e^{-\tau_i(p(t_1) -(t-t_1)\beta)}}\asymp 1,$$
which implies that $$|X_i| \asymp e^{-\tau_i\beta}$$ for all $i$.
Since
$$|X_{i_k}| \ge e^{-(\lambda_m+\eps)\tau_{i_k}}$$ for an infinite sequence of domains $X_{i_k}$, and $\beta>\lambda_m+\eps$, this yields a contradiction.

We next want to prove that $t^+ =t^*$.  We suppose not in order to get a contradiction.  In the first case suppose that $\lambda_m=0$.  Then $p(t)\ge 0$ for all $t\in \R$. Coupled with Bowen's formula this implies that $p(1)= 0$.  So the convexity of $p$ implies $t^+=t^*$, as required.  Now suppose that $\lambda_m>0$.  Since we assumed $t^+$ the graph of $p(t)$ must be above, and parallel to $t\mapsto -t\lambda_m$ on $[t^*,\infty)$.  This implies that $t^+=\infty$ and so Theorem~\ref{thm:eq_exist_unique} gives equilibrium states for all $t\in \R$.  Hence we can mimic the argument above, with the inducing scheme as in Theorem~\ref{thm:schemes} compatible with $\mu_{t^*}$, but instead taking $[t_1,t_2] \subset [t^*, \infty)$ and $\beta=\lambda_m$.  Noting that the argument of Lemma~\ref{lem:good schemes} ensures that we chose the scheme $(X,F)$ so that there is a sequence of domains $|X_{i_k}| \le e^{-(\lambda_M-\eps)\tau_{i_k}}$, we can complete the argument.
\end{proof}

\begin{proof}[Proof of Theorem~\ref{thm:smooth}]
The convexity  of $p$ follows from Proposition~\ref{prop:strictconv}, the smoothness from \ref{prop:regular} and the fact that the pressure is decreasing from \cite{Prz}.
\end{proof}

\section{Phase transitions in the positive spectrum}
\label{sec:kinks acips}

In this section we study the relation between the existence of first order phase transitions  at the point $t=1$ and the existence of an acip. The following proposition has Proposition~\ref{prop:acip} as a corollary.

\begin{prop}
Suppose that $f\in \F$ has $\lambda_m=0$.  Then $f$ has an acip if and only if $p$ has a first order phase transition at $t=1$. \label{prop:acip kink}
\end{prop}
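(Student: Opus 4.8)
The plan is to prove the two implications separately, using Bowen's formula $p(t)\le 0$, the hypothesis $\lambda_m=0$ (which forces $p(t)\ge -\lambda_m t = 0$ for all $t\ge 0$, hence $p(t)=0$ for $t\ge t^+$ and in particular $p(1)=0$ since $t^+\le 1$ when $\lambda_m=0$ — indeed one first checks $t^+=1$ here, as hinted in the introduction), and the characterisation of equilibrium states from Theorem~\ref{thm:eq_exist_unique} together with Ledrappier's result \cite{Led} that an equilibrium state for $-\log|Df|$ with positive Lyapunov exponent is an acip.

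\emph{Suppose first that $p$ has a first order phase transition at $t=1$.} Since $\lambda_m=0$ and Bowen's formula give $p(t)=0$ for all $t\ge 1$, the right derivative satisfies $D^+p(1)=0$. A phase transition at $t=1$ then means $D^-p(1)<0$. By Theorem~\ref{thm:eq_exist_unique} (applied for $t$ slightly less than $1$, which lies in $(-\infty,t^+)$ since $t^+=1$) and the computation $Dp(t)=-\lambda(\mu_t)$ from Proposition~\ref{prop:regular}, we have, for $t<1$, that $\mu_t$ is the unique equilibrium state for $-t\log|Df|$ and $\lambda(\mu_t)=-Dp(t)$. As $t\uparrow 1$, convexity gives $\lambda(\mu_t)\downarrow -D^-p(1)=:\lambda^*>0$. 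Now take a weak$^*$ accumulation point $\mu_1$ of $(\mu_t)_{t\uparrow 1}$. Using upper semicontinuity of entropy for maps in $\F$ (noted in the excerpt, from \cite{BrKell}) and lower semicontinuity of $\mu\mapsto-\int\log|Df|\,d\mu=-\lambda(\mu)$ for $t>0$ — equivalently upper semicontinuity of $\mu\mapsto\lambda(\mu)$ — one shows $h(\mu_1)-\lambda(\mu_1)\ge \limsup_{t\uparrow 1}(h(\mu_t)-\lambda(\mu_t))\ge \limsup_{t\uparrow 1}(h(\mu_t)-t\lambda(\mu_t))=\limsup_{t\uparrow 1}p(t)=p(1)=0$, so $\mu_1$ is an equilibrium state for $-\log|Df|$; moreover along the subsequence $\lambda(\mu_1)\ge\lambda^*>0$ (here one must be a little careful — I would instead argue directly that $h(\mu_t)=p(t)+t\lambda(\mu_t)$ stays bounded below by $p(1)+\lambda^*/2>0$ for $t$ close to $1$, hence $h(\mu_1)>0$, hence $\lambda(\mu_1)>0$ by \cite{Prz} and Ruelle's inequality \cite{Ruelleineq}). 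Then \cite{Led} shows $\mu_1$ is an acip.

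\emph{Conversely, suppose $f$ has an acip $\mu$.} Since $f\in\F$ has no attracting cycles and non-flat critical points, $\lambda(\mu)>0$ (an acip has positive Lyapunov exponent, by Przytycki's result \cite{Prz} / the standard fact that acips of non-flat maps are not supported on repelling behaviour with zero exponent; more directly, $\log|Df|\in L^1(\mu)$ and $\lambda(\mu)>0$ for acips of maps with no homtervals). By Ruelle's inequality and the chain rule / Rokhlin formula, $h(\mu)=\lambda(\mu)$, so $h(\mu)-\lambda(\mu)=0=p(1)$, i.e. $\mu$ is an equilibrium state for $-\log|Df|$. Hence $Dp(1)$, if it existed, would have to equal $-\lambda(\mu)<0$; but $p(t)=0$ for all $t\ge 1$ forces $D^+p(1)=0$. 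Therefore $D^-p(1)\le-\lambda(\mu)<0=D^+p(1)$, so $p$ is not differentiable at $t=1$, i.e. there is a first order phase transition there.

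\emph{Main obstacle.} The delicate point is the first implication: extracting from the phase transition an \emph{honest} equilibrium state at $t=1$ with positive Lyapunov exponent, rather than a measure that has "escaped to the boundary" (recall the potential $-t\log|Df|$ is not upper semicontinuous for $t>0$, which is exactly the phenomenon exploited throughout the paper). The clean way around this is to avoid taking limits of $\mu_t$ at all and instead invoke Theorem~\ref{thm:eq_exist_unique} together with the monotone/convexity machinery already built: $D^-p(1)<0$ means $p$ is strictly decreasing just to the left of $1$, so $\sup\{h(\mu)-\lambda(\mu):\mu\in\M_f\}=p(1)=0$ is attained in the limit with entropies bounded away from zero, and then Lemma~\ref{lem:pos ent} (with $t_0=1$, legitimate since $1<t^+$ fails — so one must instead run the argument at parameters $t<1$ and pass to the limit using Proposition~\ref{prop:conv to eq}, whose output measure is an equilibrium state and whose construction controls inducing times uniformly, precluding escape). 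Concretely: apply Proposition~\ref{prop:conv to eq} with the sequence $\mu_{t_n}$, $t_n\uparrow 1$; its conclusion is that the limit $\mu_1$ is an equilibrium state for $\psi_1=-\log|Df|-p(1)$, and positivity of entropy plus \cite{Prz} gives $\lambda(\mu_1)>0$, so \cite{Led} applies and $\mu_1$ is an acip. This routes the whole argument through results already established in the excerpt and sidesteps the semicontinuity failure entirely.
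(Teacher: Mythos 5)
Your proposal is correct and follows essentially the same route as the paper, which packages the phase-transition-to-positive-entropy step into Lemma~\ref{lem:meas at kink}: from $D^-p(1)<-\lambda_m=0$ and convexity, the equilibrium states $\mu_t$ for $t\uparrow 1$ have entropy uniformly bounded below, so the inducing-scheme machinery behind Proposition~\ref{prop:conv to eq} produces a positive-entropy equilibrium state for $-\log|Df|$, which is an acip by \cite{Led}; the converse direction is verbatim the paper's. You are right to distrust your first semicontinuity chain — the inequality $h(\mu_t)-\lambda(\mu_t)\ge h(\mu_t)-t\lambda(\mu_t)$ runs the wrong way for $t<1$, $\lambda(\mu_t)>0$, and $\mu\mapsto h(\mu)-\lambda(\mu)$ is \emph{not} upper semicontinuous here — and your fallback (bounding $h(\mu_t)$ below via $h(\mu_t)=p(t)+t\lambda(\mu_t)$ and appealing to the \emph{arguments} of Proposition~\ref{prop:conv to eq} rather than its statement, since $1=t^+$ is excluded from $(t^-,t^+)$) is exactly what the paper does.
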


\begin{rem}
Note that if $\lambda_m>0$ then the situation is quite different.  For example if $f\in \F$ satisfies the Collet-Eckmann condition (which by \cite{BrvS} implies $\lambda_m>0$), in which case the map also has an acip, then by \cite[Theorem 3]{BTeqnat}, $p$ is real analytic in a neighbourhood of $t=1$.
\end{rem}

The following lemma will be used to prove Proposition~\ref{prop:acip kink}.

\begin{lema}
Suppose that $t^+\in (0, \infty)$ and there is a first order phase transition at $t^+$.  Then there exists an inducing scheme $(X,F)$, an equilibrium state $\mu_{\Psi}$ for $\Psi=\Psi_{t^+}$, and an equilibrium state $\mu_\psi$ for $\psi=\psi_{t^+}$ with $h(\mu_\psi)>0$.
\label{lem:meas at kink}
\end{lema}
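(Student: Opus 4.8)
The plan is to realise $\mu_\psi$ as a weak$^*$ limit of the equilibrium states $\mu_{t_k}$ for $-t_k\log|Df|$ along a sequence $t_k\uparrow t^+$, and $\mu_\Psi$ as the corresponding lift. First note what the phase transition buys us. By Proposition~\ref{prop:regular}, $p$ is $C^1$ on $(-\infty,t^+)$ with $Dp(t)=-\lambda(\mu_t)$; for $t\ge t^+$ we have $p(t)=-\lambda_m t$ (the pressure is linear there, while $p(t)\ge-\lambda_m t$ always and $p(t)\le-\lambda_m t$ for $t>t^+$ by definition of $t^+$), so $D^+p(t^+)=-\lambda_m$. A first order phase transition at $t^+$ means $D^-p(t^+)\neq D^+p(t^+)$, so by convexity $\ell:=-D^-p(t^+)>\lambda_m\ge 0$. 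Choose $t_k\uparrow t^+$ with $t_k\in(0,t^+)$ and let $\mu_{t_k}$ be the unique equilibrium state from Theorem~\ref{thm:eq_exist_unique}. Since the one-sided derivatives of a convex function are monotone limits of the derivative, $\lambda(\mu_{t_k})=-Dp(t_k)\to\ell$, and by continuity of $p$,
\[ h(\mu_{t_k})=p(t_k)+t_k\lambda(\mu_{t_k})\longrightarrow p(t^+)+t^+\ell=t^+(\ell-\lambda_m)>0, \]
so there is $\eta>0$ with $h(\mu_{t_k})>\eta$ for all large $k$; discard the remaining terms.

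By Lemma~\ref{lem:bdd ind int} applied with $\epsilon=\eta$ and a pigeonhole argument, pass to a subsequence so that all the $\mu_{t_k}$ are compatible with a single inducing scheme $(X,\tilde F,\tilde\tau)\in Cover(\eta)$ with $\int\tilde\tau\,d\mu_{\tilde F,t_k}<\theta$ uniformly. Since $\mu_{t_k}$ is an equilibrium state for $\psi_{t_k}$ we have $h(\mu_{t_k})+\int\psi_{t_k}\,d\mu_{t_k}=0$; using $\psi_{t^+}-\psi_{t_k}=-(t^+-t_k)\log|Df|-(p(t^+)-p(t_k))$,
\[ h(\mu_{t_k})+\int\psi_{t^+}\,d\mu_{t_k}=\big(p(t_k)-p(t^+)\big)-(t^+-t_k)\lambda(\mu_{t_k})\longrightarrow 0, \]
and this quantity is $\le 0$ by the Variational Principle. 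Passing to the induced measures via the Abramov formula and using $1\le\int\tilde\tau\,d\mu_{\tilde F,t_k}<\theta$, the argument of Lemma~\ref{lem:press less zero} and Proposition~\ref{prop:zero Gur} then gives $P^G(\tilde\Psi_{t^+})=0$.

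Given $P^G(\tilde\Psi_{t^+})=0$, every argument in Section~\ref{sec:Gibbs integ} applies verbatim at $t=t^+$ (the only property of the parameter used there is this vanishing). So, replacing $(X,\tilde F)$ by $(X,F)$ with $F=\tilde F^k$ for $k$ as in Proposition~\ref{prop:conv to Gibbs} and repeating the proof of Proposition~\ref{prop:conv to eq} with $(\mu_{t_k})_k$ in place of $(\mu_n)_n$: the induced measures $\mu_{F,t_k}$ are tight (as $\mu_{F,t_k}\{\tau\ge j\}\le\theta k/j$ uniformly), their weak$^*$ limit $\mu_\Psi$ satisfies a uniform Gibbs property and so is the unique Gibbs measure for $(X,F,\Psi_{t^+})$ by \cite{BuSar}, $\int\tau\,d\mu_\Psi\le\theta k<\infty$ by monotone convergence, hence $\mu_\Psi$ projects by \eqref{eq:lift} to $\mu_\psi\in\M_f$ which is a weak$^*$ limit of $(\mu_{t_k})_k$, and the Abramov formula shows $\mu_\Psi$ and $\mu_\psi$ are equilibrium states for $\Psi_{t^+}$ and $\psi_{t^+}$ respectively. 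Finally, by upper semicontinuity of the entropy map on $\M_f$ (\cite{BrKell}),
\[ h(\mu_\psi)\ge\limsup_{k\to\infty}h(\mu_{t_k})=t^+(\ell-\lambda_m)>0. \]

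The main obstacle is establishing $P^G(\Psi_{t^+})=0$ hand in hand with the \emph{uniform} integrability $\int\tau\,d\mu_{F,t_k}<\theta k$; both rest on the phase transition forcing $\ell>\lambda_m$, hence $h(\mu_{t_k})$ bounded away from $0$, which is precisely what lets Lemma~\ref{lem:bdd ind int} be invoked. Without a phase transition the entropies could decay to $0$, the inducing times would blow up, and the limit measure would escape, as in the Chebyshev example in the remark following Proposition~\ref{prop:conv to eq}; so the hypothesis is used in an essential way.
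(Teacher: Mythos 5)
Your proof is correct and follows essentially the same strategy as the paper's: use the phase transition at $t^+$ to force $D^-p(t^+)<-\lambda_m$, deduce that the equilibrium states $\mu_{t_k}$ for $t_k\uparrow t^+$ have entropies bounded below by (approximately) $-t^+(D^-p(t^+)+\lambda_m)>0$, and then invoke the machinery of Proposition~\ref{prop:conv to eq} (via Lemma~\ref{lem:bdd ind int}, uniform integrability of inducing times, and the Gibbs/tightness argument) to pass to a limiting equilibrium state at $t^+$. The paper's proof is terser, stating the entropy lower bound directly from convexity and then deferring to ``the arguments used to prove Proposition~\ref{prop:conv to eq},'' whereas you spell out the intermediate steps ($D^+p(t^+)=-\lambda_m$, the free energy computation showing $P^G(\tilde\Psi_{t^+})=0$, the tightness and monotone-convergence bound on $\int\tau\,d\mu_\Psi$); your additional appeal to upper semicontinuity of entropy to conclude $h(\mu_\psi)>0$ is a valid alternative to the paper's implicit reliance on the fact that Gibbs equilibrium states on a countable Markov shift have positive entropy.
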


\begin{proof}
The fact that there is a first order phase transition implies that the left derivative of $p$ at $t^+$ has $Dp^-(t^+)<-\lambda_m$.
The convexity of the pressure function implies that the graph of the pressure lies above the line $t\mapsto D^-p(t^+)t-t^+(\lambda_m+D^-p(t^+))$.
This means that we can take a sequence of equilibrium states $\mu_{\psi_t}$ for $t$ arbitrarily close to, and less than, $t^+$ with free energy converging to $p(t^+)$ with
$$h(\mu_{\psi_t}) \ge -t^+(Dp^-(t^+)+\lambda_m)>0.$$  Hence the  arguments used to prove Proposition~\ref{prop:conv to eq} give us an equilibrium state for $\mu_{t^+}$ with positive entropy.
\end{proof}

\begin{proof}[Proof of Proposition~\ref{prop:acip kink}]
If there exists an acip $\mu$ then $Dp^-(1)=-\lambda(\mu)<0$.  Since $\lambda_m=0$ implies $p(t)=0$ for all $t\ge 1$, the existence of an acip implies that there is a first order phase transition at $t=1$.

On the other hand, if there exists a first order phase transition at $t=1$ then Lemma~\ref{lem:meas at kink} implies that there is an equilibrium state $\mu_1$ for $-\log|Df|$, with $h(\mu_1)>0$.  By \cite{Led} this must be an acip.
\end{proof}

%\begin{rem}
%If $f\in \F$ is unimodal and $\lambda_m=0$, then %Propositions~\ref{prop:regular} and \ref{prop:acip kink} imply that
%\begin{enumerate}
%\item if there is an acip then $p$ is $C^1$ everywhere except at $t=1$.
%\item if there is no acip then  $p$ is $C^1$ in $\R$.
%\end{enumerate}
%\end{rem}

\begin{rem}
If $\lambda_m>0$ and there is a measure $\mu_m$ such that $\lambda(\mu_m)=\lambda_m$, then by Lemma~\ref{lem:meas at kink} and the arguments in the proof of Proposition~\ref{prop:t^-}, we have that $t^+=\infty$.
\label{rmk:min meas}
\end{rem}

\begin{rem}
%In the case $\lambda=\lambda_m$
There are examples of maps in $\F$ with $\{\mu\in \M_f:\lambda(\mu)=\lambda_m\} = \es$, for example \cite[Lemma 5.5]{BrKell}, a quadratic map in $\F$ is defined so that $\lambda_m=0$, but there are no measures with zero Lyapunov exponent.  There are also examples of maps $f\in \F$ with $\{\mu\in \M_f:\lambda(\mu)=\lambda_m\} \neq \es$, for example in \cite{Brminim} examples of quadratic maps in $\F$ are given for which the omega-limit set of the critical point supports (multiple) ergodic measures with zero Lyapunov exponent. Moreover, Cortez and Rivera-Letelier \cite{CJ}  proved that given $\mathcal{E}$ a non-empty, compact, metrisable and totally disconnected topological space then there exists a parameter $\gamma \in (0,4]$ such that set of invariant probability measures of $x\mapsto \gamma x(1-x)$, supported on the omega-limit set of the critical point is homeomorphic to $\mathcal{E}$.
\end{rem}

%\textbf{MT: reworked this part of the paper.  Martens seems to be the correct citation (both more general and earlier).  Also now I'm not sure if $t^+<1$ for the wild attractor case.  It could be that there is a measure having larger Hausdorff dimension than the metric attractor (the topological attractor can still be an interval - the metric one a Cantor set).  So the previous argument for $t^+<1$ isn't true.}

It is plausible that there are maps $f\in \F$ for which
\begin{equation*}
\inf \left\{ t \in \mathbb{R} : p(t) \le 0 \right\} <1.
\end{equation*}
However, the following argument shows that this is not true for unimodal maps with quadratic critical point in $\F$.

Given an interval map $f:I \to I$, we say that $A\subset I$ is a \emph{metric attractor} if $B(A):=\{\omega(x)\subset A\}$ has positive Lebesgue measure and there is no proper subset of $A$ with this property.  On the other hand $A$ is a \emph{topological attractor} if $B(A)$ is residual and there is no proper subset of $A$ with this property.  We say that $f$ has a \emph{wild attractor} if there is a set $A$ which is a metric attractor, but not a topological one.

\begin{prop}
\label{prop:pos press}
If $f\in \F$ is a unimodal map with no wild attractor then for $t<1$, $p(t)>0$.
\end{prop}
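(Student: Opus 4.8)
\emph{Proof proposal.} The plan is to prove the equivalent statement that $p(t)=P(-t\log|Df|)>0$ for each \emph{fixed} $t<1$, by exhibiting one measure $\mu\in\M_f$ with $h(\mu)-t\lambda(\mu)>0$; since $t\lambda(\mu)$ is finite this $\mu$ is admissible in the Variational Principle, so $p(t)\ge h(\mu)-t\lambda(\mu)>0$. Fix an inducing scheme $(X,F,\tau)$ as in Theorem~\ref{thm:schemes}: each branch $X_i$ is mapped diffeomorphically onto $X$ with distortion on all iterates bounded by some $K\ge1$, and (replacing $F$ by a suitable iterate if necessary) we may assume it is uniformly expanding, $|DF|\ge\Lambda>1$. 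Write $\P_n^F$ for the $n$-cylinders of $F$, let $X^{(n)}:=\bigcup_{\cyl_n\in\P_n^F}\cyl_n$, a decreasing sequence, and set $X^{(\infty)}:=\bigcap_nX^{(n)}=\{x\in X: F^nx\text{ is defined for every }n\}$. Coding $(X,F)$ by the full shift on a countable alphabet (Remark~\ref{rmk:conj to shift}), we work with the Gurevich pressure $P_F^G$.

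The first step is to observe that the absence of a wild attractor forces $|X^{(\infty)}|>0$. Since $f$ is transitive a residual set of points has dense orbit, so the topological attractor of $f$ is $I$; as $f$ has no wild attractor, its metric attractor equals $I$ as well. By the classification of metric attractors of non-flat, negative-Schwarzian unimodal maps with no attracting or neutral cycles, this forces Lebesgue-a.e.\ $x$ to have $\omega(x)=I$, so the $f$-orbit of a.e.\ $x$ returns to $X$, indeed to each branch $X_i$, infinitely often. Transporting this through the inducing construction — equivalently, lifting Lebesgue measure conservatively to the Hofbauer tower in which the scheme is constructed and invoking Poincar\'e recurrence on its base — gives that, off a Lebesgue-null set, every point of $X$ has all its $F$-iterates defined; hence $|X^{(n)}\cap X_{i}|\to|X_{i}|>0$ as $n\to\infty$ for each $i$.

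The second step is the estimate $P_F^G(-t\log|DF|)>0$. Fix a symbol $i_0$. Each $n$-cylinder $\cyl_n\in\P_n^F$ whose first symbol is $i_0$ contains a unique $F^n$-periodic point $x_{\cyl_n}$, and since $F^n(\cyl_n)=X$ the distortion bound yields $|DF^n(x_{\cyl_n})|^{-t}\ge c_1\,(|\cyl_n|/|X|)^t$ for a constant $c_1>0$ depending only on $K$ and $t$. Uniform expansion gives $|\cyl_n|\le|X|\Lambda^{-n}$, and as $t-1<0$ this yields $(|\cyl_n|/|X|)^t=(|\cyl_n|/|X|)\,(|\cyl_n|/|X|)^{t-1}\ge(|\cyl_n|/|X|)\,\Lambda^{(1-t)n}$. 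Summing over the $n$-cylinders inside $C_{i_0}$,
\[
Z_n(-t\log|DF|,C_{i_0})\ \ge\ c_1\,\Lambda^{(1-t)n}\sum_{\cyl_n\subseteq C_{i_0}}\frac{|\cyl_n|}{|X|}\ =\ c_1\,\Lambda^{(1-t)n}\,\frac{|X^{(n)}\cap X_{i_0}|}{|X|}\ .
\]
By the first step $|X^{(n)}\cap X_{i_0}|\to|X_{i_0}|>0$, so $Z_n\ge c_2\,\Lambda^{(1-t)n}$ for all large $n$ and therefore $P_F^G(-t\log|DF|)=\lim_n\frac1n\log Z_n\ge(1-t)\log\Lambda>0$.

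Finally, the Approximation Property (Proposition~\ref{prop:approx}) provides a finite sub-alphabet $S'$ with $P_{\sigma|\Sigma_{S'}}(-t\log|DF|)>0$, where $\Sigma_{S'}$ is the full shift on $S'$; this is a compact subsystem of $(X,F)$ on which $-t\log|DF|$ is continuous (indeed weakly H\"older, Remark~\ref{rmk:weak Holder}) and $\tau$ is bounded. Hence there is an equilibrium state $\nu$ with $h_F(\nu)-t\int\log|DF|\,d\nu=P_{\sigma|\Sigma_{S'}}(-t\log|DF|)>0$ and $\int\tau\,d\nu<\infty$, so $\nu$ projects by \eqref{eq:lift} to a measure $\mu\in\M_f$, and the Abramov formula gives
\[
h(\mu)-t\lambda(\mu)=\frac{h_F(\nu)-t\int\log|DF|\,d\nu}{\int\tau\,d\nu}>0 ,
\]
as required. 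The main obstacle is the first step: one must confirm that the hypothesis ``no wild attractor'' really does force the inducing scheme to carry a set of positive Lebesgue measure, i.e.\ $|X^{(\infty)}|>0$. This rests on the structure theory of metric attractors of unimodal maps — the metric attractor is $\omega(c)$, which being non-wild and, by transitivity, not a proper subset of $I$, must equal $I$ — together with the conservative liftability of Lebesgue measure to the Hofbauer tower underlying the schemes of \cite{BTeqnat}. The subsidiary fact that these schemes may be taken uniformly expanding, used in the second step, also needs attention, but only of a routine kind, and is the reason one allows replacing $F$ by an iterate.
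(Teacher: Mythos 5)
Your approach is correct in outline and genuinely different from the paper's, but you have correctly flagged your own main gap, and it is a real one that the paper's proof is designed to avoid.

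The paper's route (Lemma~\ref{lem:start}) does not use the Hofbauer-tower schemes of Theorem~\ref{thm:schemes} at all for this step. Instead it invokes Martens' theorem, \cite[Theorem~V.1.4]{MSbook}, which for an $S$-unimodal map with no wild attractor (and no attracting/parabolic cycle) \emph{directly supplies} an inducing scheme $(X,F)$ with $\operatorname{Leb}(X\setminus\bigcup_iX_i)=0$. From there it truncates to a finite subsystem $(X^N,F_N)$ capturing $(1-\delta)|X|$ of the Lebesgue measure, applies Bowen's formula to get an $F_N$-invariant measure whose ratio $h/\lambda$ exceeds the Hausdorff dimension $1-\delta'$ of the Cantor repeller, and projects down by Abramov — which preserves the ratio $h/\lambda$. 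This produces, for every $\eps>0$, a measure with $h(\mu)/\lambda(\mu)>1-\eps$, and Proposition~\ref{prop:pos press} then drops out by choosing $\eps=1-t$. Your proof instead hardwires the given $t<1$ into the argument (bounding $Z_n$ from below by $\Lambda^{(1-t)n}$ times the Lebesgue mass of $X^{(n)}\cap X_{i_0}$), uses the approximation property to land on a compact subsystem, and then projects. The two are morally similar once one has a scheme carrying a set of full Lebesgue measure; your pressure computation replaces the Bowen-formula step. Note that the paper's version is slightly stronger, since it yields measures with $h/\lambda$ arbitrarily close to $1$ rather than merely positive free energy at the fixed $t$.

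The genuine gap is in your first step. You assert that the schemes of Theorem~\ref{thm:schemes} (first-return schemes on the Hofbauer tower from \cite{BTeqnat}) have $|X^{(\infty)}|>0$, arguing via ``conservative liftability of Lebesgue measure to the Hofbauer tower.'' This is not established in the paper and is not an easy consequence of the metric attractor being $I$: even if Lebesgue-a.e.\ $x$ returns to $X=\pi(\hat X)$ infinitely often downstairs, it is another matter whether the lift $\hat x$ returns to the fixed cylinder $\hat X$ in a fixed domain $D$ of $\hat I$ infinitely often, since the lifted orbit may escape up the tower. The paper sidesteps exactly this issue by using Martens' scheme instead, for which full Lebesgue measure of $\bigcup_iX_i$ (and hence, by the smoothness of branches, of $X^{(\infty)}$) is part of the conclusion of the cited theorem. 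You should therefore replace your Theorem~\ref{thm:schemes} scheme by the Martens scheme; the rest of your argument then goes through with only routine adjustments (e.g.\ the uniform-expansion hypothesis on $F$ or an iterate, which, as you note, is routine via Koebe/Ma\~n\'e once $X$ is well inside $I$).
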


\begin{rem} It was shown in \cite{BKNS} that there are  unimodal maps with wild attractors in $\F$.  However, if $\ell_c=2$ then this is not possible by \cite{Lyuwild}.
\label{rmk:wild}
\end{rem}

\begin{lema}\label{lem:start}
If $f\in \F$ is a unimodal map with no wild attractor then for each $\eps>0$ there exists a measure $\mu\in \M_f$ so that $$\frac{h(\mu)}{\lambda(\mu)}>1-\eps.$$
\end{lema}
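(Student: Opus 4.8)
The statement asserts that for a unimodal $f\in\F$ with no wild attractor, one can find invariant measures whose "dimension ratio" $h(\mu)/\lambda(\mu)$ is arbitrarily close to $1$. Since by Ruelle's inequality $h(\mu)\le\lambda(\mu)$ always, this ratio is at most $1$, and equality (in the sense of a supremum) is exactly what one expects from a map that "fills up" the interval metrically. The natural candidate to realize ratios close to $1$ is an acip, or, if no acip exists, a measure that behaves metrically like one. So the plan is to produce such a measure by a hands-on construction and to feed it into the Birkhoff/Shannon–McMillan–Breiman machinery relating entropy, Lyapunov exponent and Lebesgue measure.

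Here is the route I would take. First, since $f$ has no wild attractor, its (unique, by unimodality) metric attractor $A$ carries the dynamics of Lebesgue-a.e. point, and $B(A)$ has full measure. I would fix a small scale and use a Rokhlin/Kac-type tower or, more efficiently, one of the inducing schemes $(X,F,\tau)$ from Theorem~\ref{thm:schemes}: choose $X$ a small interval so that Lebesgue-a.e.\ point eventually enters $X$ and returns infinitely often, and so that the induced map $F$ has bounded distortion on all iterates. Because there is no wild attractor, the return-time function $\tau$ is finite Lebesgue-a.e.\ on $X$, and one can arrange that the induced Lebesgue measure on $X$ has $\int_X\tau\,dm<\infty$ — this is precisely the mechanism that would fail for a wild attractor, where $\tau$ is not Lebesgue-integrable. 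Then the induced Lebesgue measure (normalized) pushes down to a finite $f$-invariant measure $\mu_0$ absolutely continuous w.r.t.\ Lebesgue; if one cannot guarantee integrability outright, one instead takes, for each large $N$, the invariant measure $\mu_N$ obtained from the truncated inducing scheme $\{X_i:\tau_i\le N\}$, which is automatically a legitimate (Gibbs-type) invariant probability measure with positive entropy.

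Next, for these measures I would estimate $h(\mu_N)/\lambda(\mu_N)$ directly. On the induced system, the bounded-distortion property gives, for a cylinder $\cyl_n\in\P_n^F$, that its Lebesgue measure is comparable to $|DF^n|^{-1}$ on $\cyl_n$, so $-\log m(\cyl_n)\approx\sum_{k<n}\log|DF(F^k x)|$; dividing by $n$ and using the Birkhoff theorem for the induced Lebesgue-class measure shows the induced entropy equals the induced Lyapunov exponent (Rokhlin's formula for the induced map, which is a uniformly expanding Markov map). Passing back down via the Abramov formula, $h(\mu_N)=h_F(\mu_{F,N})/\int\tau\,d\mu_{F,N}$ and $\lambda(\mu_N)=\lambda_F(\mu_{F,N})/\int\tau\,d\mu_{F,N}$, so the ratio is preserved and one gets $h(\mu_N)/\lambda(\mu_N)\to 1$ as the truncation is relaxed (the error coming only from the truncation/distortion constants, which can be made as small as desired by taking $N$ large and $X$ with small enough distortion). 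One must of course also note $\lambda(\mu_N)>0$, which holds since $F$ is uniformly expanding, so the ratio is well-defined.

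The main obstacle is the passage from "Lebesgue-a.e.\ point returns to $X$" to genuine integrability/uniform control of the return time $\tau$ — in general $\int_X\tau\,dm$ could be infinite, and it is exactly here that the no-wild-attractor hypothesis must be used (via the dichotomy between metric and topological attractors for unimodal maps, or equivalently a result of Lyubich/Blokh–Lyubich or Martens–de Melo–van Strien on the structure of the attractor). The clean way around it is to work with the truncated schemes as above so that every $\mu_N$ is honestly a probability measure, and only let the ratio approach $1$ in the limit; the no-wild-attractor condition then enters only to guarantee that these truncated measures can be chosen so that the truncation loses an arbitrarily small proportion of the dynamics, i.e.\ that $\mu_{F,N}(\{\tau>N\})\to 0$ in a way that does not degrade the entropy/Lyapunov comparison. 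Everything else is a standard bounded-distortion plus Abramov-plus-Rokhlin computation.
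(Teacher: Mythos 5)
Your overall plan---induce, truncate, estimate the ratio, then Abramov---has the same shape as the paper's proof, but the middle step rests on a misidentification of the relevant measure.

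The key structural input is Martens' theorem (\cite[Theorem V.1.4]{MSbook}): for an $S$-unimodal map with no wild attractor there is an inducing scheme $(X,F,\tau)$ with $\mathrm{Leb}(X\setminus\cup_i X_i)=0$. You gesture at this, but the schemes of Theorem~\ref{thm:schemes} are built from first returns on the Hofbauer tower and do not automatically have full Lebesgue measure in their branch decomposition, so you cannot simply pick ``one of those''; Martens' scheme is what makes the rest of the argument run.

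More seriously, after truncating to $(X^N,F_N)$ you invoke ``Rokhlin's formula for the induced map'' to conclude that the induced entropy equals the induced Lyapunov exponent for the truncated invariant measure $\mu_{F,N}$. That identity holds for an acip, but $\mu_{F,N}$ is \emph{not} an acip of $(X^N,F_N)$: it lives on the Lebesgue-null Cantor set $\{x:\tau^k(x)<\infty\text{ for all }k\}$ of points whose entire $F$-orbit stays in the first $N$ branches. For such a singular measure Rokhlin's formula gives $h<\lambda$ strictly, and $h/\lambda$ is controlled not by an acip identity but by the Hausdorff dimension of that Cantor set. What actually makes the argument close is the dimension estimate: since the deleted branches carry Lebesgue measure $<\delta|X|$ and $F$ has uniformly bounded distortion on all iterates, the surviving Cantor set has Hausdorff dimension $>1-\delta'$ with $\delta'\to 0$ as $\delta\to 0$; the variational principle together with Bowen's formula for the conformal expanding Markov map $F_N$ then yields an $F_N$-invariant measure $\mu_F$ with $h(\mu_F)/\lambda(\mu_F)>1-\delta'$, and Abramov preserves this ratio on projection. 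Replacing your acip/Rokhlin step with this Hausdorff-dimension/Bowen-formula step turns your sketch into the paper's proof.
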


\begin{proof}
By \cite[Theorem V.1.4]{MSbook}, originally proved by Martens, there must be an inducing scheme $(X,F)$ such that $Leb(X\sm \cup_i X_i)=0$.  For any $\delta>0$ we can truncate $(X,F)$ to a finite scheme $(X^N, F_N)$ where $X^N =\cup_{i=1}^N X_i$ so that $Leb(\cup_{i=1}^N X_i) > (1-\delta)|X|.$
We therefore have
$$\dim_H\left\{x:\tau^k(x)<\infty\text{ for all } k\in \N\right\}>1-\delta'$$ where $\delta'$ depends on $\delta$ and the distortion of $F$ (in particular $\to 0$ as $\delta\to 0$).
It follows from the Variational Principle and the Bowen formula (see \cite[Chapter 7]{pe}) that  there is an $F$-invariant measure, $\mu_F$, for this system with $$\frac{h(\mu_F)}{\lambda(\mu_F)} >1-\delta'.$$
By the Abramov formula, for $\mu$ the projection of $\mu_F$,
$$\frac{h(\mu)}{\lambda(\mu)}>1 -\delta'$$
also. Choosing $\delta>0$ so small that $\delta'\le\eps$ completes the proof.
\end{proof}

\begin{proof}[Proof of Proposition~\ref{prop:pos press}]
Let $t<1$ and choose $\eps=1-t>0$.  Then the measure $\mu$ in Lemma~\ref{lem:start} has $h(\mu)-t\lambda(\mu)>0$.  Hence by the definition of pressure, $p(t)>0$.
\end{proof}

\begin{proof}[Proof of Proposition~\ref{prop:collected results}]
%That $p$ is $C^1$ strictly convex decreasing in $(-\infty, t^+)$ is exactly as in Theorem~\ref{thm:smooth}.  The fact that we can take $t^+=1$ follows from Proposition~\ref{prop:pos press} and Remark~\ref{rmk:wild}.  The fact that $p(t)=0$ for all $t\ge 1$ follows from \cite{NoSa}.
By Proposition~\ref{prop:pos press} and Remark~\ref{rmk:wild} we can take $t^+=1$.  Hence we can conclude that $p$ is $C^1$ strictly convex decreasing in $(-\infty, t^+)$ by Theorem~\ref{thm:smooth}.  The fact that $p(t)=0$ for all $t\ge 1$ follows from \cite{NoSa}.

Part (a) follows from Proposition~\ref{prop:acip kink} since this implies that both left and right derivatives of $p(t)$ at $t=1$ are zero.  Part (b) is the converse of this since the left derivative is strictly negative and the right derivative is zero.
\end{proof}

\section{Remarks on statistical properties and Chebyshev polynomials} \label{sec:remarks}

In this section we collect some further comments on our results.

\subsection{Statistical properties}
Given $f\in \F$ and an equilibrium state $\mu$ as in Theorem~\ref{thm:eq_exist_unique}, one can ask about the statistical properties of the system $(I,f,\mu)$.  For general equilibrium states we expect it should be possible to prove exponential decay of correlations along with many other statistical laws.   From the theory presented here allied to \cite{BTret}, one can show that the system $(I, f,\mu)$ has `exponential return time statistics' (for definitions see for example \cite{BTret}).

\subsection{Ergodic Optimisation}

Let $f \in \F$ and  $\phi: [0,1] \to \mathbb{R}$ a function. The study of invariant probability measures whose ergodic
$\phi-$average is as large (or as small) as possible is known as
\emph{ergodic optimisation}.  A measure $\mu \in \M_f$ is called $\phi-$minimising/maximising if
\[\int \phi \ d\mu = \inf \left\{ \int \phi \ d\nu : \nu \in \M_f \right\} \text{ or } \int \phi \ d\mu = \sup \left\{ \int \phi \ d\nu : \nu \in \M_f \right\} \]
respectively.
For a survey on the subject see \cite{OJ}.  Let $t\in (-\infty, t^+)$ and denote by $\mu_t$ the unique equilibrium state corresponding to  the potential $-t\log|Df|$. A consequence of the results in this paper is that: any accumulation point $\mu$ of a sequence of measures $\mu_{t_n}$, given by Theorem~\ref{thm:eq_exist_unique}, where $t_n \to -\infty$ is a $\log|Df|$-maximising measure.  This is because $\log|Df|$ is upper semicontinuous; $Dp(t)=-\lambda(\mu_t)$; and this derivative is asymptotic to $-\lambda_M$.  Hence there is a subsequence of these measures $(\mu_{t_{n_k}})_k$ so that $$\lim_{k\to \infty} \lambda(\mu_{t_{n_k}})=\lambda(\mu)=\lambda_M.$$

Note that Lemma~\ref{lem:max meas} guarantees the existence of a $\log |Df|-$maximising measure.  (We do not assert anything about the uniqueness of this measure.) Actually, any measure $\mu$, which is an accumulation point of $\mu_{t_n}$ as  $t_n \to -\infty$, is a measure maximising entropy among all measures which maximise $\log |Df|$.  Then in fact $p(t)$ is asymptotic to the line $h(\mu)-t\lambda_M$ as $t\to -\infty$.

%\textbf{MT: I was hoping that in the positive entropy case we can get uniqueness of the measure above.  Bit lost now. GI: do you mean showing that the limit $\mu_{t_n}$ as  $t_n \to -\infty$ exists? this is an open question for holder potentials in the hyperbolic setting. MT: OK.}

%\textbf{MT: no longer see how to guarantee the below holds - don't we need upper semicontinuity? GI: yes but we have it for $\log |Df|$.  MT: I should have said \emph{lower} semicontinuity since looking near $t^+$ we should get a minimising measure.  We don't have this, so I deleted the part below.}

\iffalse
 If there is no kink at $t=t^+$ then any accumulation point of a sequence $(\mu_{t_n})_n$ where $t_n\nearrow t^+$ is a $\log |Df|-$minimising  measure.  As in Remark~\ref{rmk:min meas}, having a kink at $t^+$ with $t^+>1$ is incompatible with having a minimising measure.
\fi

\subsection{The preperiodic critical point case}
\label{sec:preper}

For our class of maps $\F$ we assumed that the orbit of points in $\crit$ are infinite.   Here we comment on an alternative case.
In the case of the quadratic Chebychev polynomial $x\mapsto 4x(1-x)$ on $I$, it is well known that the two relevant measures are the acip $\mu_1$, which has $\lambda(\mu_1)=\log 2=\lambda_m$, and the Dirac measure $\delta_0$ on the fixed point at 0, which has $\lambda(\delta_0)= \log 4=\lambda_M$.  So $t^-=-1$ and
\begin{equation*}
p(t)= \begin{cases} (1-t)\log 2& \text{if } t\ge -1,\\
 -t\log 4&  \text{if } t \le -1.
\end{cases}
\end{equation*}

%\textbf{MT:New.  Not sure this is worth saying.GI: OK to lave it}
Note that the above piecewise affine form for the pressure function does not conflict with Theorem~\ref{thm:smooth}, which might be expected to apply in the interval $(t^-, t^+)$, since $t^+=t^*=-1$, where $t^*$ is defined in the proof of Proposition~\ref{prop:strictconv}.

\iffalse
\begin{rem}[Return time statistics]  Given $f\in \F$ and an equilibrium state $\mu$ as in Theorem~\ref{thm:eq_exist_unique}, one can ask about the statistical properties of the system $(I,f,\mu)$.  For general equilibrium states we expect it should be possible to prove exponential decay of correlations along with many other statistical laws.  \textbf{MT:tweaked} From the theory presented here allied to \cite{BTret}, one can show that the system $(I, f,\mu)$ has `exponential return time statistics' (for definitions see for example \cite{BTret}).
\end{rem}

\begin{rem}[Ergodic Optimisation]
 Let $f \in \F$ and  $\phi: [0,1] \to \mathbb{R}$ a function. The study of invariant probability measures whose ergodic
$\phi-$average is as large (or as small) as possible is known as
\emph{ergodic optimisation}.  A measure $\mu \in \M_f$ is called $\phi-$minimising if
\[\int \phi \ d\mu = \inf \left\{ \int \phi \ d\nu : \nu \in \M_f \right\}. \]
For a survey on the subject see \cite{OJ}.  Let $t\in (-\infty, t^+)$ and denote by $\mu_t$ the unique equilibrium state corresponding to  the potential $-t\log|Df|$. A consequence of  the results in this paper is that:
If there is no kink at $t=t^+$ then any accumulation point of a sequence $(\mu_{t_n})_n$ where $t_n\nearrow t^+$ is a $\log |Df|-$minimising  measure.  As in Remark~\ref{rmk:min meas}, having a kink at $t^+$ with $t^+>1$ is incompatible with having a minimising measure.
\end{rem}
\fi

\appendix

\section{Cusp maps}

In this section we outline how to extend the above results to some maps which are not smooth.  This class includes the class of contracting Lorenz-like maps, see for example \cite{Rovella}.

\begin{defi}
$f:\cup_jI_j\to I$ is a \emph{cusp map} if there exist constants $C,\alpha>1$ and a set $\{I_j\}_j$ is a finite  collection of disjoint open subintervals of $I$ such that
\begin{enumerate}
\item $f_j:=f|_{I_j}$ is $C^{1+\alpha}$ on each $I_j=:(a_j, b_j)$ and $|Df_j|\in (0, \infty)$.

\item $D^+f(a_j), \ D^-f(b_j)$ exist and are equal to 0 or $\pm\infty$.

\item For all $x,y\in \overline{I_j}$ such that $0<|Df_j(x)|, |Df_j(y)|\le 2$ we have $|Df_j(x)-Df_j(y)|<C|x-y|^\alpha$.

\item For all $x,y\in \overline{I_j}$ such that $|Df_j(x)|, |Df_j(y)|\ge 2$, we have $|Df_j^{-1}(x)-Df_j^{-1}(y)|<C|x-y|^\alpha$.
\end{enumerate}
We denote the set of points $a_j, b_j$ by $\crit$.
\end{defi}

\begin{rem}
Notice that if for some $j$, $b_j=a_{j+1}$, i.e. $I_j\cap I_{j+1}$ intersect, then $f$ may not continuously extend to a well defined function at the intersection point $b_j$, since the definition above would then allow $f$ to take either one or two values there.  So in the definition above, the value of $f_j(a_j)$ is taken to be $\lim_{x\searrow a_j}f_j(x)$ and $f_j(b_j)=\lim_{x\nearrow b_j}f_j(x)$, so for each $j$, $f_j$ is well defined on $\overline{I_j}$.
\label{rmk:boundaries}
\end{rem}

\begin{rem}
In contrast to the class of smooth maps $\F$ considered previously in this paper, for cusp maps we can have $\lambda_M=\infty$ and/or $\lambda_m=-\infty$.  The first possibility follows since we allow singularities (points where the one-sided derivative is $\infty$).  The second possibility follows from the presence of critical points (although is avoided for smooth multimodal maps with non-flat critical points by \cite{Prz}).  Examples of both of these possibilities can be found in \cite[Section 3.4]{Dobthes}.
\label{rmk:lyap infinity}
\end{rem}

%\textbf{MT: do we need the set of points which never map into $\crit$ to be dense?}

We will ultimately be interested in cusp maps without singular points with negative Schwarzian derivative (in fact the latter rules out the former).  Note that since we are only interested in the transitive parts the system, transitive multimodal maps as in the rest of the paper can be considered to fit into this class.

We show below that we can build a Hofbauer extension $(\hat I, \hat f)$.  %We note that the critical points essentially have two images
We note that the possible issue of $f$ not being well defined at the boundaries of $I_j$, discussed in Remark~\ref{rmk:boundaries}, does not change anything in the definition of the Hofbauer tower.

%\subsection{Hofbauer towers}

We next define the Hofbauer extension.  The setup we present here can be applied to general dynamical systems, since it only uses the structure of dynamically defined cylinders.  An alternative way of thinking of the Hofbauer extension specifically for the case of multimodal interval maps, which explicitly makes use of the critical set, is presented in \cite{BrBr}.

We let $\cyl_n[x]$ denote the member of $\P_n$, which defined as above, containing $x$.  If $x\in \cup_{n\ge 0}f^{-n}(\crit)$ there may be more than one such interval, but this ambiguity will not cause us any problems here.

The \emph{Hofbauer extension} is defined as $$\hat
I:=\bigsqcup_{k\ge 0}\bigsqcup_{\cyl_{k}\in \P_{k}}
f^k(\cyl_{k})/\sim$$ where $f^k(\cyl_{k})\sim
f^{k'}(\cyl_{k'})$ as components of the disjoint union $\hat I$ if $f^k(\cyl_{k})= f^{k'}(\cyl_{k'})$ as subsets in $I$.  Let
$\D$ be the collection of domains of $\hat I$ and $\pi:\hat
I \to I$ be the natural inclusion map.  A point $\hat x\in \hat I$ can
be represented by $(x,D)$ where $\hat x\in D$ for $D\in \D$ and
$x=\pi(\hat x)$.  Given $\hat x\in \hat I$, we can denote the domain $D\in \D$ it belongs to by $D_{\hat x}$.

The map $\hat f:\hat I \to \hat I$ is defined by
$$\hat f(\hat x) = \hat f(x,D) = (f(x), D')$$
if there are cylinder sets $\cyl_k \supset \cyl_{k+1}$ \st $x \in
f^k(\cyl_{k+1}) \subset f^k(\cyl_{k}) = D$ and $D' = f^{k+1}
(\cyl_{k+1})$.
In this case, we write $D \to D'$, giving $(\D, \to)$ the
structure of a directed graph.  Therefore, the map $\pi$
acts as a semiconjugacy between $\hat f$ and $f$: $$\pi\circ \hat
f=f\circ \pi.$$
We denote the `base' of $\hat I$, the copy of $I$ in $\hat I$, by  $D_0$.  For $D\in \D$, we define $\level(D)$ to be the length of the shortest path $D_0 \to \dots \to D$ starting at the base $D_0$.  For each $R \in \N$, let $\hat I_R$ be the compact
part of the Hofbauer tower defined by
$$
\hat I_R := \sqcup \{ D \in \D : \level(D) \le R \}.$$

For maps in $\F$, we can say more about the graph structure of $(\D, \to)$ since Lemma 1 of \cite{BTeqnat} implies that if $f\in \F$ then there is a closed primitive subgraph $\D_{\T}$ of $\D$.  That is, for any $D,D' \in\D_{\T}$ there is a path $D\to \cdots \to D'$; and for any $D\in \D_{\T}$, if there is a path $D\to D'$ then $D'\in \D_{\T}$ too.  We can denote the disjoint union of these domains by $\hat I_{\T}$.  The same lemma says that if $f\in \F$ then $\pi(\hat I_{\T})=\Omega$, the non-wandering set and $\hat f$ is transitive on $\hat I_{\T}$.  Theorem~\ref{thm:cusp facts} gives these properties for transitive cusp maps.

Given an ergodic measure $\mu\in \M_f$, we say that $\mu$ \emph{lifts to $\hat I$} if there exists an ergodic $\hat f$-invariant probability measure $\hat\mu$ on $\hat I$ such that $\hat\mu\circ\pi^{-1}=\mu$.  For $f\in \F$, if $\mu\in \M_f$ is ergodic and $\lambda(\mu)>0$ then $\mu$ lifts to $\hat I$, see \cite{Kellift, BrKell}.

Property $(*)$ is that for any $\hat x, \hat y\notin \bd\hat I$ with $\pi(x)=\pi(y)$ there exists $n$ such that $\hat f^n(\hat x)=\hat f^n(\hat y)$.  This follows for cusp maps by the construction of $\hat I$ using the branch partition.

We will only use the following result in the context of equilibrium states for cusp maps with no singularities.  However, for interest we state the theorem in greater generality.

\begin{teo}
Suppose that $f:I \to I$ is a transitive cusp map with $h_{top}(f)>0$.  Then:
\begin{enumerate}
\item there is a transitive part $\hat I_{\T}$ of the tower such that $\pi(\hat I_{\T})=I$;
\item any measure $\mu\in \M_f$ with $0<\lambda(\mu)<\infty$ lifts to $\hat\mu$ with $\mu=\hat\mu\circ \pi^{-1}$;
\item for each $\eps>0$ there exists $\eta>0$ and a compact set $\hat K\subset \hat I_{\T}\sm \bd \hat I$ such that any measure $\mu\in \M_f$ with $h(\mu)>\eps$ and $0<\lambda(\mu)<\infty$ has $\hat\mu(\hat K)>\eta$.
\end{enumerate}
\label{thm:cusp facts}
\end{teo}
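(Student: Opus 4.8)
The plan is to obtain Theorem~\ref{thm:cusp facts} as the cusp-map counterpart of facts that are by now standard for smooth multimodal maps, adapting the existing proofs and checking that they use only ingredients available in the present setting: the combinatorics of the dynamically-defined cylinders $\P_n$, bounded distortion on iterates (supplied for $C^{1+}$ maps with negative Schwarzian by the Koebe Lemma, see \cite{Cedthesis}, and near singular points by conditions (3)--(4) in the definition of a cusp map), the existence of local unstable manifolds (available under a $C^{1+}$ assumption by \cite{Led, Dobcusp}), and property $(*)$. Throughout, the hypotheses that $f$ is transitive and $h_{top}(f)>0$ replace, respectively, condition c) and the non-triviality assumptions used for the class $\F$.

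For (1), I would run the purely combinatorial argument of \cite[Lemma~1]{BTeqnat} inside the transition graph $(\D,\to)$. It produces a closed primitive subgraph $\D_{\T}$ --- for any $D,D'\in\D_{\T}$ there is a path $D\to\cdots\to D'$, and $D\to D'$ with $D\in\D_{\T}$ forces $D'\in\D_{\T}$ --- on whose union $\hat I_{\T}$ the map $\hat f$ is transitive. The construction of the Hofbauer extension here uses only the branch partition, so the lemma carries over; and since $f$ is transitive on all of $I$ (not on a proper union of intervals, as in the renormalisable case), one gets $\pi(\hat I_{\T})=I$.

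For (2), after passing to the ergodic decomposition it suffices to treat ergodic $\mu\in\M_f$ with $0<\lambda(\mu)<\infty$. I would build the lift as a weak$^*$ accumulation point of the Ces\`aro averages of $(\hat f^k)_*\hat\mu_0$, where $\hat\mu_0$ is the copy of $\mu$ on the base $D_0$, and show that no mass escapes to $\level=\infty$; this is the Keller--Bruin--Keller argument (\cite{Kellift, BrKell}) as carried out in \cite[Theorem~3]{BTeqnat}. Positivity of $\lambda(\mu)$ forces the orbit to leave any fixed neighbourhood of $\crit$ at a definite exponential rate, so, using Koebe distortion and local unstable manifolds, the inverse branches defining the tower contract and the level-tail of the averaged measures is summable; finiteness of $\lambda(\mu)$ is the new ingredient needed to control the time the orbit spends near singular points. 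That $\mu=\hat\mu\circ\pi^{-1}$ is built into the construction, and ergodicity of $\hat\mu$ follows from property $(*)$.

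For (3), which I expect to be the main obstacle, I would prove two estimates, uniform over all $\mu\in\M_f$ with $h(\mu)>\eps$ and $0<\lambda(\mu)<\infty$: first, a Hofbauer-type tail bound $\hat\mu(\hat I_{\T}\setminus\hat I_R)\to 0$ as $R\to\infty$, uniform in $\mu$, coming from the fact that deep levels of the tower contribute little to the entropy (the estimate underlying \cite[Remark~6]{BTeqnat}); second, a bound showing that a measure of entropy at least $\eps$ cannot give more than a small mass to a small neighbourhood of $\crit$, since such a neighbourhood is essentially a single monotone branch and carries little entropy, and this transfers to a bound on $\hat\mu$ near $\bd\hat I$ inside $\hat I_R$. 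Combining the two, for $R$ large and the neighbourhood small enough one extracts a compact set $\hat K\subset\hat I_{\T}\setminus\bd\hat I$ with $\hat\mu(\hat K)>\eta$ for a uniform $\eta=\eta(\eps)>0$. The delicate part is making both estimates genuinely uniform over this non-compact family of measures while checking that neither the singularities nor the weaker $C^{1+}$ regularity degrades the distortion control on which they depend.
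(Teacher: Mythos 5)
Your proposal is correct in spirit and largely shadows the paper's argument, but for part (1) the paper takes a genuinely different and more economical route than the one you describe. You propose to rerun the combinatorial construction of a closed primitive subgraph from \cite{BTeqnat}; the paper notes this is possible but instead invokes Hofbauer's spectral decomposition theorem \cite[Theorem~11]{Hofpw}, which splits $\hat I$ into countably many irreducible closed primitive subgraphs plus entropy-free pieces. Since $h_{top}(\hat f)=h_{top}(f)>0$ at least one such subgraph $\E$ exists; because $\pi(\E)$ is open and $f$ is transitive, $\pi(\E)$ contains a point with dense orbit and hence contains $\omega(x)$; and property $(*)$ forces distinct $\E$, $\E'$ to have disjoint $\pi$-images, so there is exactly one such subgraph and $\pi(\hat I_{\T})=I$. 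This exploits an off-the-shelf decomposition instead of redoing the transition-graph combinatorics.

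For part (2) the paper does not reconstruct the Ces\`aro-average argument at all; it goes entirely by citation: Ledrappier's local unstable manifolds \cite[Proposition~3.2]{Led} under a non-degeneracy hypothesis, Dobbs' removal of that hypothesis for cusp maps \cite[Theorem~13]{Dobcusp}, and then Keller's lifting theorem \cite[Theorem~6]{Kellift}. Your sketch glosses over two points that the paper is careful about: Ledrappier needs a non-degeneracy condition that is not free for cusp maps (this is precisely what Dobbs supplies), and Keller's theorem is stated for \emph{non-atomic} ergodic measures; the paper removes the non-atomic restriction by combining Dobbs with the observation that the relevant measures are not supported on $\cup_{n\ge 0}f^n(\crit)$. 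Your explanation of the role of $\lambda(\mu)<\infty$ (``controls time near singular points'') is also looser than what is really needed, which is that the unstable-manifold/lifting machinery is only available for finite positive exponent, a genuine concern for cusp maps in view of Remark~\ref{rmk:lyap infinity}. Part (3) is handled in the paper by direct reference to \cite[Lemma~4]{BTeqnat}; your two-estimate outline is a fair description of what goes into that lemma, though you would still need to verify uniformity over the family of measures, which the cited lemma already does.
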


\begin{proof}
\text{Part (1):}
The first part can be shown as in \cite[Lemma 2]{BTeqnat}, but we argue as in \cite{Hofpw} (see also \cite[Theorem 6]{KellStP}).  Theorem 11 of that paper gives a decomposition of $\hat I$ into a countable union $\Gamma$ of irreducible (maximal with these properties) closed (if there is a path $D\to D'$ for $D\in \E$ then $D'\in \E$) primitive (there is a path between any $D,D'\in \E$) subgraphs $\E$ along with some sets which carry no entropy.  Since $h_{top}(\hat f)=h_{top}(f)$ and we have positive topological entropy, this means that $\Gamma \neq \es$.  Let $\E\in \Gamma$.  Clearly $\pi(\E)$ is open, so by the transitivity of $f$, there must be a point $x\in \pi(\E)$ which has a dense orbit in $I$.  By definition, $\omega(x)\subset\pi(\E)$.
By property $(*)$, $\pi(\E)\cap \pi(\E') = \es$ for any $\E,\E'\in \Gamma$ which implies that $\#\Gamma =1$.  That there is a dense orbit in $(\E,\hat f)$ follows from the Markov property of this subgraph, so we let $\hat I_{\T}=\sqcup_{D\in \E}D$.

\text{Part (2):}
Ledrappier, in \cite[Propostion 3.2]{Led} proved the existence of non-trivial local unstable manifolds for a more general class of maps (so-called PC-maps) with an ergodic measure $\mu\in \M_f$ with $\lambda(\mu)>0$.  However, he also required a non-degeneracy condition.  For cusp maps, Dobbs \cite[Theorem 13]{Dobcusp} was able to this but without the non-degeneracy requirement.

Keller showed in \cite[Theorem 6]{Kellift} that the existence of such unstable manifolds means that any non-atomic ergodic measure $\mu\in \M_f$ with $\lambda(\mu)>0$ lifts to $\hat\mu$ on $(\hat I, \hat f)$ and that $\mu= \hat\mu\circ\pi^{-1}$.  Using Dobbs and assuming that $\mu$ is not supported on $\cup_{n\ge 0}f^n(\crit)$ we can drop the non-atomic assumption (see also \cite[Theorem 3.6]{BrKell}).

\text{Part (3):}
The third part follows exactly as in \cite[Lemma 4]{BTeqnat}.
\end{proof}

Suppose now that $f$ is a cusp map without singularities (i.e. $|Df|$ is bounded above), with negative Schwarzian and such that the non-wandering set $\Omega$ is an interval.  We consider $f:\Omega \to \Omega$.
For each $t\in (t^-, t^+)$, we can find a finite number of inducing schemes as in Proposition~\ref{thm:schemes} with which all measures with large enough free energy w.r.t. $\psi_t$ will be compatible.  It is important here that we assume negative Schwarzian derivative since we need bounded distortion for our inducing schemes.  This then allows us to prove Theorem~\ref{thm:eq_exist_unique} for this class of maps, but we may have $t^->-\infty$.  If we exclude maps with preperiodic critical points then we again have $t^-=-\infty$.  Similarly we can prove Theorem~\ref{thm:smooth} for this class of maps, although again we only get $t^-=-\infty$ if we exclude maps with preperiodic critical points.  Note also that the fact that $\lambda_m$ can be negative, and may even be $-\infty$, implies that $t^+$, which for the class $\F$ had to lie in $[1,\infty]$, could be any value in the range $[0,\infty]$ for cusp maps.

Note that for the maps considered by Rovella in \cite{Rovella}, the critical values are periodic and so the measure supported on them is not seen by our inducing schemes.  This is like the Chebyshev case, so as in that situation, the pressure function could be piecewise affine.

 \end{document}